\newtheorem{thm}{Theorem}[section]
\newtheorem{lem}[thm]{Lemma}
\theoremstyle{definition}
\theoremstyle{remark}
\newtheorem{rem}[thm]{Remark}
\numberwithin{equation}{section}
\newcommand{\norm}[1]{\left\Vert#1\right\Vert}
\newcommand{\normXs}[1]{\left\Vert#1\right\Vert_{X^{s,b}}}
\newcommand{\normXk}[1]{\left\Vert#1\right\Vert_{X^{k,b_1}}}
\newcommand{\normXl}[1]{\left\Vert#1\right\Vert_{X^{l,b}_{\pm}}}
\newcommand{\bracs}[1]{\langle#1\rangle}
\newcommand{\wh}[1]{\widehat{#1}}
\newcommand{\hv}{\widehat{v}}
\newcommand{\lr}[1]{\langle #1 \rangle}
\newcommand{\Se}{S_{\varepsilon}}
\newcommand{\We}{W_{\varepsilon\pm}}
\newcommand{\Le}{\Delta_{\varepsilon}}
\newcommand{\Phie}[1]{\phi_{\varepsilon}(#1)}
\newcommand{\Phieo}{\phi_{\varepsilon}(\xi_1)}
\newcommand{\Phiew}{\phi_{\varepsilon}(\xi_2)}
\title{On one dimensional Quantum Zakharov system}
\author{Jin-Cheng Jiang}
\address{Department of Mathematics, National Tsing Hua University, Hsinchu, Taiwan 30013, R.O.C.}
\email{jcjiang@math.nthu.edu.tw}
\author{Chi-Kun Lin}
\address{Department of Mathematical Sciences, Xi'an Jiaotong-Liverpool University, SIP. Suzhou, JiangSu, 215123, P.R. China}
\email{Andrew.Lin@xjtlu.edu.cn}
\author{Shuanglin Shao}
\address{Department of Mathematics, University of Kansas, Lawrence, KS  66045, USA}
\email{slshao@math.ku.edu}
\subjclass[2010]{35L56,35Q40}
\begin{document}

\begin{abstract}
In this paper, we discuss the properties of one dimensional quantum Zakharov
system which describes the nonlinear interaction between the
quantum Langmuir and quantum ion-acoustic waves.
The system~\eqref{E:Zakharov-1}-\eqref{E:Zakharov-2} with initial data
$(E(0),n(0),\partial_t n(0))\in H^k\bigoplus H^l\bigoplus H^{l-2}$
is local well-posedness in low regularity spaces (see Theorem~\ref{T:main-theorem} and Figure~\ref{F:klm}).
Especially, the low regularity result for $k$ satisfies $-3/4<k\leq -1/4$
is obtained by using the key observation that the convoluted phase function is convex and
careful bilinear analysis. The result can not be obtained by  using only Strichartz
inequalities for "Schr\"{o}dinger" waves.

\end{abstract}

\maketitle

\section{Introduction}

The one-dimensional quantum Zakharov equations with initial data read
\begin{subequations}
\begin{align}
\label{E:Zakharov-1} & i\frac{\partial E}{\partial t}+\frac{\partial^2 E}{\partial x^2}-\varepsilon^2\frac{\partial^4 E}{\partial x^4}=nE,\\
\label{E:Zakharov-2} & \frac{\partial^2 n}{\partial t^2}-\frac{\partial^2 n}{\partial x^2}+\varepsilon^2\frac{\partial^4 n}{\partial x^4}=\frac{\partial^2 |E|^2}{\partial x^2},\\
\label{E:Zakharov-3} &
E(0,x)=E_0(x),\quad n(0,x)=n_0(x),\quad \frac{\partial n}{\partial t}(0,x)=n_1(x),
\end{align}
\end{subequations}
where the complex valued function $E=E(t,x)$ is the envelope electric field and the real valued function $n=n(t,x)$ is the plasma density fluctuation (measured from its equilibrium value). They are defined in
$\mathbb{R}^{+}_{t}\times\mathbb{R}_{x}$.
We assume $E_0\in H^k(\mathbb{R}),
 n_0\in H^l(\mathbb{R})$ and $(-\Delta+\varepsilon^2\Delta^2)^{-1/2}n_1\in H^l(\mathbb{R})$ for the study of local well-posedness.
The dimensionless quantum parameter
\begin{equation}\label{parameter}
\varepsilon = \frac{\hbar \omega_i}{\kappa_B T_e}
\end{equation}
is the ratio between the ion plasmon energy and the electron thermal energy, where $\hbar$ is Planck's constant divided by $2\pi$, $\omega_i$ is the ion plasma frequency, $\kappa_B$ is the Boltzmann constant and $T_e$ is the electron fluid temperature.
The quantum Zakharov equations are obtained to describe the nonlinear interaction between high-frequency quantum Langmuir waves and the low-frequency quantum ion-acoustic waves~\cite{Ha11, HS09}. The formal classical limit $\varepsilon \to 0$ yields the original Zakharov equations:
\begin{subequations}
\begin{align}
\label{E:Zakharov-1a} & i\frac{\partial E}{\partial t}+\frac{\partial^2 E}{\partial x^2}=nE\;,\\
\label{E:Zakharov-2a} & \frac{\partial^2 n}{\partial t^2}-\frac{\partial ^2 n}{\partial x^2}=\frac{\partial^2 |E|^2}{\partial x^2},\\
\label{E:Zakharov-3a} &
E(0,x)=E_0(x),\quad n(0,x)=n_0(x),\quad
\frac{\partial n}{\partial t}(0,x)=n_1(x),
\end{align}
\end{subequations}
which are one of the most important models in plasma physics \cite{SS99, Zak72}.
They describe the interaction between high-frequency Langmuir waves and low-frequency ion-acoustic waves. For the adiabatic limit of the Zakharov equations (\ref{E:Zakharov-1a})--(\ref{E:Zakharov-2a}), one neglects the second order time derivative of the density fluctuation, $\frac{\partial^2 n}{\partial t^2}\approx 0$, then $\frac{\partial^2}{\partial x^2}(n+|E|^2)=0$ implies $n=-|E|^2$ and the resulting equation is the cubic nonlinear Schr\"odinger equation
\begin{equation}\label{NLS}
i\frac{\partial E}{\partial t}+|E|^2 E +\frac{\partial^2 E}{\partial x^2} =0\,,
\end{equation}
which is known to be completely integrable and is one of most important nonlinear partial differential equations. However, for the quantum Zakharov equations (\ref{E:Zakharov-1})--(\ref{E:Zakharov-2}), the adiabatic limit will be
\begin{subequations}
\begin{align}
\label{E:Zakharov-1b} & i\frac{\partial E}{\partial t} +|E|^2 E+\frac{\partial^2 E}{\partial x^2}=\varepsilon^2\Big(\frac{\partial^4 E}{\partial x^4} +E\frac{\partial^2 n}{\partial x^2}\Big)\;,\\
\label{E:Zakharov-2b} & - n+\varepsilon^2\frac{\partial^2 n}{\partial x^2}= |E|^2\;,
\end{align}
\end{subequations}
If we further take the limit $\varepsilon \to 0$, the semiclassical limit, then $n=-|E|^2$ and the cubic nonlinear Schr\"odinger equation (\ref{NLS}) will be recovered. Thus it is natural to consider
\begin{equation}\label{pNLS}
i\frac{\partial E}{\partial t}+|E|^2 E +\frac{\partial^2 E}{\partial x^2} =\varepsilon^2\Big(\frac{\partial^4 E}{\partial x^4} -E\frac{\partial^2 |E|^2}{\partial x^2}\Big)\,,
\end{equation}
as the quantum perturbation of the cubic nonlinear Schr\"odinger equation.

The main purpose of this paper is to study the local well-posedness of one dimensional quantum Zakharov system (\ref{E:Zakharov-1})--(\ref{E:Zakharov-2}) with low regularity initial data.  The local well-posedness of Cauchy problem for the Zakharov system
in Euclidean space has been
extensively studied. We do not intend to list all the references, one can
see for example~\cite{OT92,BC96,GTV97,BKS00,CHT08,BHHT09,BH10} and references therein.
Unlike the Zakharov system, there are only a few well-posedness results for the quantum Zakharov system. The current results are mainly focused on
higher regularity spaces, for example Guo, Zhang and Guo~\cite{GZG12} showed that
$d$-dimensional ($d=1,2,3$) quantum Zakharov system is global well-posedness in
$H^k\bigoplus H^{k-1}\bigoplus H^{k-3}$ for integer $k\geq 2$. Since $k=2$ is the 
energy space,  the global well-posedness  is a consequence of the local well-posedness and
the conservation law of quantum Zakharov system (see section 2).
In general, the well-posedness results of the Zakharov system as well as other dispersive equations with low regularity initial data
can be established by the Strichartz inequalities. The key step is to derive non-linear estimates
by extensive use of Strichartz inequalities which has its origin from Bourgain~\cite{Bo93}.  For one dimensional Zakharov system, Ginibre, Tsutsumi and Velo~\cite{GTV97} established the local well-posedness result in low regularity spaces by adapting a
method first proposed by Kenig, Ponce and Vega~\cite{KPV96} to treat the Korteweg-de Vries equation which is a variant of Bourgain's method. Their method does not use Strichartz inequalities in the derivation of non-linear estimates, and relies instead on using Schwarz inequality cleverly followed by a direct estimation.

Similar to the method developed by Ginibre, Tsutumi and Velo~\cite{GTV97} in studying
one spatial dimensional Zakharov system, we combine the Strichartz
and Schwarz inequalities to estimate the non-linear interactions.
The challenge in the study of the quantum Zakharov system is that the interactions
of the non-linear part are much more complicated than that of
the Zakharov system due to the appearance of the fourth order terms as well as
the quantum parameter $\varepsilon$ in~\eqref{E:Zakharov-1} and~\eqref{E:Zakharov-2}.
When two waves are close enough, their interactions can be treated by Strichartz
inequalities as in~\cite{GTV97}.  However, when two waves are away from each other,
we have to use the Schwarz inequality instead.
In~~\cite{GTV97}, this part is not complicated and can be overcome 
by the change of variable argument. In our case, the bi-harmonic operator
prohibits us to apply that method.
The miracle here is that we can make use of the key observation that the convoluted phase function
of ``waves" is convex to get the quantitative estimates which describe the
separation of waves through the bilinear analysis. It is worth noting that the lower regularity
result for $k$ satisfies $-3/4<k\leq -1/4$
can not be obtained by Strichartz inequalities.

These quantitative  estimates allow us to get the well-posedness in low regularity spaces.
We believe that this ingredient will be the key for studying the quantum Zakharov system in
higher dimensional spaces and the other couple dispersive systems.

The main result of this paper is the following.
\begin{thm}\label{T:main-theorem}
For any fixed $0<\varepsilon\leq 1$,
the quantum Zakharov system~${\rm (\ref{E:Zakharov-1})}$-${\rm (\ref{E:Zakharov-2})}$ with initial data
$(E_0,n_0,n_1)\in H^k\oplus H^l\oplus H^{l-2}$ is locally well-posed provided $(k,l)$ is in
the set $\mathbb{A}$ defined by
\begin{equation}\label{E:A-set-2}
\begin{split}
\mathbb{A}&=\left\{(k,l)|-\frac{3}{2}<k-l<\frac{3}{2}\;,\;k\geq 0 \right\}\\
& \cup \left\{(k,l)|2k-l>-\frac{3}{2}\;,\;k+l>-\frac{3}{2}\;,\; -\frac{3}{4} <k<0\right\}.
\end{split}
\end{equation}
Also see Figure~\ref{F:klm}.
\end{thm}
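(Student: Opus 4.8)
The plan is to recast the system as a fixed-point problem in Bourgain spaces adapted to the two distinct dispersion relations, and to reduce local well-posedness to a pair of bilinear estimates. First I would diagonalize the density equation \eqref{E:Zakharov-2}. Writing $\omega_\varepsilon(\xi)=\sqrt{\Phie{\xi}}$ with $\Phie{\xi}=\Xie$, the spatial operator in \eqref{E:Zakharov-2} is $-\partial_x^2+\varepsilon^2\partial_x^4=\omega_\varepsilon(D)^2$, so that $\partial_t^2+\omega_\varepsilon(D)^2=(\partial_t+i\omega_\varepsilon(D))(\partial_t-i\omega_\varepsilon(D))$. Setting $n_\pm=n\pm i\,\omega_\varepsilon(D)^{-1}\partial_t n$, so that $n=\tfrac12(n_++n_-)$, reduces \eqref{E:Zakharov-2} to the first-order equations $(i\partial_t\mp\omega_\varepsilon(D))n_\pm=\mp\,\omega_\varepsilon(D)^{-1}\partial_x^2(|E|^2)$. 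The linear flows then place $E$ on the characteristic surface $\tau=\Phie{\xi}$ and $n_\pm$ on $\tau=\pm\omega_\varepsilon(\xi)$, so I would measure $E$ in $X^{k,b}$ with weight $\bracs{\xi}^k\bracs{\tau-\Phie{\xi}}^b$ and $n_\pm$ in $X^{l,b}_\pm$ with weight $\bracs{\xi}^l\bracs{\tau\mp\omega_\varepsilon(\xi)}^b$, taking $b$ slightly larger than $1/2$. After a time cutoff to $[0,T]$ and passage to the Duhamel form, the solution map is a contraction on a ball in $X^{k,b}\oplus X^{l,b}_+\oplus X^{l,b}_-$ provided the nonlinear estimates below hold; the standard $X^{s,b}$ machinery supplies the homogeneous and Duhamel bounds together with the small gain $T^{\theta}$ coming from time localization.

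By these linear estimates the whole problem reduces to two bilinear bounds, one per nonlinearity. For the Schr\"odinger equation \eqref{E:Zakharov-1} I need
\begin{equation*}
\|n_\pm E\|_{X^{k,b-1}}\lesssim \|n_\pm\|_{X^{l,b}_\pm}\,\|E\|_{X^{k,b}},
\end{equation*}
and for the density equation \eqref{E:Zakharov-2} I need
\begin{equation*}
\bigl\|\omega_\varepsilon(D)^{-1}\partial_x^2(E_1\overline{E_2})\bigr\|_{X^{l,b-1}_\pm}\lesssim \|E_1\|_{X^{k,b}}\,\|E_2\|_{X^{k,b}}.
\end{equation*}
Both would be proved by duality together with a dyadic Littlewood--Paley decomposition of the three frequencies in the convolution $\xi=\xi_1+\xi_2$ and of the three modulation variables.

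The organizing quantity is the resonance function obtained by restricting to the characteristic surfaces, for instance $H=\Phie{\xi_1+\xi_2}-\Phie{\xi_2}\mp\omega_\varepsilon(\xi_1)$ for the $n\cdot E$ interaction with $\xi=\xi_1+\xi_2$; on the support of the convolution the largest of the three modulations dominates $|H|$, and $H$ governs the smoothing available. I would split each estimate into the regime where the two interacting frequencies are comparable and the regime where they are separated. In the comparable regime the interaction is handled as in \cite{GTV97} by bilinear Strichartz inequalities for the quantum-Schr\"odinger propagator $e^{-it\Phie{D}}$. In the separated regime the Strichartz route fails because the biharmonic term distorts the group velocities; here the key observation of the abstract enters: $\phi_\varepsilon$ is strictly convex, since $\phi_\varepsilon''(\xi)=2+12\varepsilon^2\xi^2>0$, so the difference $\Phie{\xi_1+\xi_2}-\Phie{\xi_2}=\int_0^{\xi_1}\phi_\varepsilon'(\xi_2+s)\,ds$ is monotone in the frequency gap. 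Applying the Schwarz inequality in one variable then leaves an integral over a level set of $H$ whose measure is controlled, via convexity, by the lower bound on $|H|$; this furnishes the quantitative decay in the gap that encodes the separation of waves.

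The step I expect to be the main obstacle is closing these estimates on the low-regularity window $-3/4<k\le -1/4$, which is exactly where the remark following the abstract asserts that Strichartz inequalities alone are insufficient. There the two derivatives carried by $\partial_x^2(|E|^2)$ and the high-low interaction in $n_\pm E$ cannot be absorbed by bilinear Strichartz for such rough $E$, so the convexity-based separation estimate must be sharp, and the indices $k,l$ must be balanced so that the power of the frequency gap produced by convexity exactly cancels the derivative loss in each dyadic regime. Carrying these exponents through all the frequency/modulation cases and verifying that they close precisely on the polygon $\mathbb{A}$---in particular along the binding boundary lines $2k-l=-3/2$ and $k+l=-3/2$---is where the delicate bilinear bookkeeping is concentrated.
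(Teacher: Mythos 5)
Your overall architecture --- diagonalizing the density equation via $n_\pm=n\pm i\Lambda^{-1}\partial_t n$, setting up the contraction in $X^{k,b_1}\oplus X^{l,b}_{\pm}$, and reducing everything to the two bilinear estimates you state --- is exactly the paper's reduction (Sections 3--4), and your identification of convexity of the phase as the replacement for Strichartz in the separated-frequency regime is the right key idea. However, there is a genuine gap at precisely the point you flag as ``the main obstacle,'' namely $-3/4<k\le-1/4$. For the wave-equation nonlinearity the paper applies the Schwarz inequality in two different ways: taking the supremum over the wave variables $(\tau,\xi)$ yields the kernel bound $C_2(\varepsilon)$, which involves only the interaction of two fourth-order phases $\lr{\tau_1+\Phie{\xi_1}}$ and $\lr{\tau_1-\tau+\Phie{\xi_1-\xi}}$ and closes only for $k>-1/4$; to go below $k=-1/4$ one must instead integrate in $(\zeta_1,\zeta_2)$ and take the supremum over a Schr\"odinger input, producing the kernel $C_3(\varepsilon)$ in which a second-order wave $\lr{\tau_1-\tau\pm\sqrt{\Phie{\xi_1-\xi}}}$ interacts with a fourth-order wave. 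This mixed interaction (Lemma~\ref{L:key-lemma-3}) is the genuinely new and most delicate part of the paper: the relevant convexity there is not $\phi_\varepsilon''>0$ but convexity of the full convolved phase $\xi_1\mapsto\tau\pm\sqrt{\Phie{\xi_1-\xi}}+\Phie{\xi_1}$, which contains a square root and must be verified separately, and the level-set measure bounds require an explicit partition argument near the minimum of this function together with a case analysis on the location of $\tau$ relative to $\pm\sqrt{\Phie{\xi}}$. Your proposal, which treats the separated regime uniformly by ``Schwarz plus convexity of $\phi_\varepsilon$,'' would reproduce only the $C_1$ and $C_2$ estimates and therefore closes only on $\{k\ge 0\}\cup\{-1/4<k<0\}$; without the alternative dualization and the second-order-versus-fourth-order analysis the claimed region $\mathbb{A}$ down to $k>-3/4$ is not reached.

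A secondary, smaller point: the paper does not use a dyadic Littlewood--Paley decomposition at all; the non-Strichartz regime is handled in the Kenig--Ponce--Vega/Ginibre--Tsutsumi--Velo style by bounding the supremum of an explicit $L^2$ kernel norm after integrating out $\tau_1$ with the elementary convolution lemma (Lemma~\ref{L:s-1_s-2}). Your dyadic/resonance-function framework could in principle be made to work, but the case in which the dominant modulation is the Schr\"odinger one with $k$ very negative is exactly where it must be supplemented by the mixed-wave estimate above.
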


\begin{rem}
The condition $\varepsilon\leq 1$ is for the convenience of discussion and the case
$\varepsilon\rightarrow 0$ is more interesting.
The lowest regularity we obtained here is the pair $(k,l)$ which is close to ${\rm C}=(-3/4,-3/4)$.
It is not clear whether the pair $(-3/4,-3/4)$ is optimal or not.
\end{rem}

\begin{figure}
\includegraphics[scale=0.5]{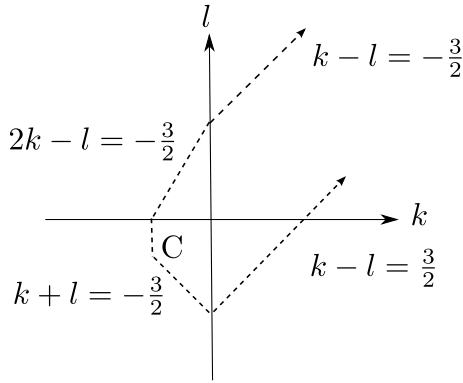}
\caption{Range of $k,l$}
\label{F:klm}
\end{figure}

\begin{rem}
 We can write the power $\frac{3}{2}$ as $\frac{4-1}{2}$.  Heuristically, $4$ comes from fourth order term and $1$ is necessary for
 non-linear estimates, while $2$ in the denominator is due to us working on $L^2$ based spaces.
\end{rem}

\begin{rem}
The dependence of time interval for well-posedness on $\varepsilon$ can be tracked explicitly as the $C(\varepsilon)$ in lemma
~\ref{L:bilinear-1} and~\ref{L:bilinear-2} are of order $\varepsilon^{-1},\varepsilon^{-2}$ respectively. They are from the estimates in
section 4,5,6 where $C(\varepsilon)$ are of order $\varepsilon^{-1}$ in lemma~\ref{L:C_1_bounded},~\ref{L:C_2_bounded} and
~\ref{L:C_3_bounded}, of order $\varepsilon^{-1/4}$ in lemma~\ref{L:key-lemma-1},~\ref{L:key-lemma-2},~\ref{L:key-lemma-3} and
~\ref{L:around_xi}.  Those orders can be checked in the proofs of lemmas  and will not be emphasized later.
\end{rem}

The question of the singular limits of the Zakharov and related systems, the Klein-Gordon Zakharov system for example, has been studied extensively.
Quite often, the limiting solution (when it exists) satisfies a completely different nonlinear partial differential equations. The nonlinear Schr\"odinger limit of the Zakharov system is one physical problem involving plasma frequency and ion sound speed effects where such a singular limit process is interesting. The earlier results are shown in \cite{AA88, SW86} and the case when initial layer occurs was investigated by Ozawa and Tsutsumi in \cite{OT92a}. The readers are also referred to Masmoudi-Nakanishi \cite{MK2008} for a complete result where they were able to overcome the difficulty of the existence of a resonance frequency. The convergence of the quantum Zakharov system to the Zakharov system is also interesting, see \cite{GZG12} for the recent result. However, the convergence in the lower regularity developed in this paper is a challenging problem and it will be our main research project in the near future.

The rest of paper is organized as follows. In Section 2, we discuss the conservation
laws and hydrodynamic limit (formally) of the quantum Zakharov system. In Section 3, we reduce the well-posedness of Cauchy problem of the quantum Zakharov system to three key estimates. These estimates will be proved in Section 4 by using the estimates built up in Sections 5 and 6.
In Section 5, we build the estimate describing the interaction of two ``{\it fourth order waves}". Finally, we prove the estimate describing the interaction of
one ``{\it second order wave}" and one ``{\it fourth order wave}" in Section 6.

{\bf Notation} The expression $X\lesssim Y$ means that $X\leq CY$ for some constant $C$ depending on each occurrence.
The notation $ X\approx Y$ means that there exists two positive constants $C_1,C_2$ such that $C_1Y \leq X \leq C_2 Y$.
We also use the bracket $\lr{\xi}=(1+|\xi|)$ for the convenience. A constant $C(b)$ means the constant $C$ depends
on $b$, and a constant $C$ means that constant is a uniform constant.  The notation $B-$ appearing in section 4,5 and 6 means $B-\delta$ where $\delta$ is a small positive number
which can be chosen  arbitrarily close to zero.

We define the inner product in $L^2$ space by $\langle f(x),g(x)\rangle=\int f(x)\overline{g}(x)dx$. The Fourier transform
and inverse Fourier transform are defined by
\[
\wh{f}(\xi)=\int_{-\infty}^{\infty} e^{-ix\xi} f(x)dx\quad \hbox{and} \quad \check{f}(x)=\frac{1}{2\pi}\int_{-\infty}^{\infty} e^{ix\xi} \wh{f}(\xi)d\xi\;
\]
respectively. The Sobolev norm is defined by Fourier transform as
\[
\|f\|^2_{H^s}=\|\lr{\xi}^{s}\widehat{f}(\xi)\|^2_{L^2}=\int (1+|\xi|^2)^{s}|\wh{f}(\xi)|^2d\xi.
\]
When we consider the time-space Fourier transform and its inverse,
we use $(t,x)$ to denote the time-space variables and $(\tau,\xi)$ to denote their Fourier counters. The space-time Sobolve norm is defined by 
\[
\|u\|_{H^{s,b}}=\|\lr{\xi}^s\lr{\tau}^b\widehat{u}(\tau,\xi) \|_{L^2}.
\]

\section{Conservation law}

The one-dimensional quantum Zakharov equations (\ref{E:Zakharov-1})--(\ref{E:Zakharov-2})
are derived from a variational principle~\cite{Ha11},
\begin{equation}\label{action}
\delta S= \delta \int\!\!\!\int \mathcal{L} dx dt =0
\end{equation}
with a Lagrangian density
\begin{equation}
\begin{array}{c}\label{E:Lagrangian}
\displaystyle\mathcal{L}= \frac{i}{2}\Big(E^* \frac{\partial E}{\partial t}
-E \frac{\partial E^*}{\partial t}\Big)
-\Big|\frac{\partial E}{\partial x}\Big|^2-\frac{\partial u}{\partial x} |E|^2 +\frac{1}{2}\Big|\frac{\partial u}{\partial t}\Big|^2
-\frac{1}{2}\Big|\frac{\partial u}{\partial x}\Big|^2
\\ \\
\displaystyle-\varepsilon^2 \Big|\frac{\partial^2 E}{\partial x^2}\Big|^2
-\frac{\varepsilon^2}{2} \Big|\frac{\partial^2 u}{\partial x^2}\Big|^2,
\end{array}
\end{equation}
where the auxiliary variable $u$, satisfying $\frac{\partial u}{\partial x}=n$,  is introduced such that the density can be found. The variational derivative $\delta S/\delta E^*= \delta S/\delta E=0$ produces (\ref{E:Zakharov-1}) and its complex conjugate equation, respectively. But the equation for $n$ is not straightforward, instead, taking the variational derivative $\delta S/\delta u=0$ we have
\begin{equation}
\frac{\partial}{\partial x}\Big(|E|^2 +\frac{\partial u}{\partial x}\Big) +\frac{\partial^2 u}{\partial t^2}+\varepsilon^2\frac{\partial^4 u}{\partial x^4}=0
\end{equation}
which will reproduce (\ref{E:Zakharov-2}) after differentiation with respect to $x$. The Lagrangian formulation allows us to systematically derive conserved quantities by means of Noether's theorem, relating invariance, symmetries and conservation laws.
The action (\ref{action}) is trivially invariant under the phase transformation, i.e., gauge invariant,
$$
E(t,x) \mapsto e^{i\theta} E(t,x),\qquad \theta \in \mathbb{R}
$$
and thus quantum Zakharov equations admit the conservation law for the mass (or the number of high frequency quanta)
\begin{equation}\label{E:conservation-mass}
\int_{-\infty}^\infty|E|^2 dx=\int_{-\infty}^\infty E E^* dx\,.
\end{equation}
Similarly, the action is invariant under time translation, and we have the conservation for the energy (or Hamiltonian)
\begin{equation}\label{E:conservation-energy}
H=\int_{-\infty}^\infty\left( \Big|\frac{\partial E}{\partial x}\Big|^2+\varepsilon^2\Big|\frac{\partial^2 E}{\partial x^2}\Big|^2+n|E|^2+\frac{n^2}{2}+\frac{|V|^2}{2}
+\frac{\varepsilon^2}{2}\Big|\frac{\partial n}{\partial x}\Big|^2 \right) dx\,,
\end{equation}
where $V=-D^{-1}_{x} n_t$. From~\eqref{E:conservation-energy}, we
see that if $(E,n)\in H^2\oplus H^1$, one can control the energy $H$
since $|\int n|E|^2 dx|\leq \|n\|_{H^1}\|E\|_{H^2}$. Hence the local well-posedness result in Theorem~\ref{T:main-theorem}
implies the global well-posedness of the Cauchy problem~\eqref{E:Zakharov-1}-\eqref{E:Zakharov-2} in energy spaces
(cf.~\cite{GZG12}).

Besides the energy equation associated with the quantum Zakharov system (1.1) are the mass and momentum equations. Their densities $\rho$ and $J$  are given respectively by
\begin{equation}
\rho = E E^* = |E|^2,\qquad J= i\bigg(E \frac{\partial E^*}{\partial x}
- E^* \frac{\partial E}{\partial x}\bigg)
\end{equation}
and the mass and momentum equations are then
\begin{equation}\label{M-local}
\frac{\partial \rho}{\partial t} +\frac{\partial J}{\partial x}=\epsilon^2\frac{\partial J_Q}{\partial x} \,,
\end{equation}
\begin{equation}\label{M-locala}
\begin{array}{l}
\displaystyle{\frac{\partial J}{\partial t}}-
{\frac{\partial}{\partial x}}\bigg(E {\frac{\partial^2 E^*}{\partial x^2}}-2{\frac{\partial E}{\partial x}}{\frac{\partial E^*}{\partial x}} + E^* {\frac{\partial^2 E}{\partial x^2}} \bigg)+ 2 {\frac{\partial n}{\partial x}} |E|^2\\[4mm]
\displaystyle
=\varepsilon^2{\frac{\partial}{\partial x}}\bigg(E {\frac{\partial^4 E^*}{\partial x^4}}-2{\frac{\partial E}{\partial x}}
{\frac{\partial^3 E^*}{\partial x^3}} + 2{\frac{\partial^2 E}{\partial x^2}}{\frac{\partial^2 E^*}{\partial x^2}}
-2{\frac{\partial^3 E}{\partial x^3}}{\frac{\partial E^*}{\partial x}}-{\frac{\partial^4 E}{\partial x^4}} E^* \bigg),
\end{array}
\end{equation}
where
\begin{equation}
J_Q=i\bigg(E \frac{\partial^3 E^*}{\partial x^3} - \frac{\partial E}{\partial x}\frac{\partial^2 E^*}{\partial x^2}+ \frac{\partial^2 E}{\partial x^2}\frac{\partial E^*}{\partial x}-  \frac{\partial^3 E}{\partial x^3}E^*\bigg)\,.
\end{equation}
The conservation of mass  comes from the imaginary part of (\ref{E:Zakharov-1}) and hence contains no contribution of $n$.
The momentum $J$ is not conservative due to the coupling of the density $|E|^2$ and
${\frac{\partial n}{\partial x}}$. 
The $O(\epsilon^2)$ term of (\ref{M-local}) and (\ref{M-locala}) shows the quantum effect of the Langmuir wave. Formally letting $\varepsilon \to 0$ in (2.6) and (2.7) we have the hydrodynamical equations
\begin{equation}
\frac{\partial \rho}{\partial t} +\frac{\partial J}{\partial x}=0,
\end{equation}
\begin{equation}
\frac{\partial J}{\partial t} +\frac{\partial }{\partial x}\bigg(\frac{|J|^2}{\rho}\bigg)+2\rho \frac{\partial n}{\partial x}=
\frac{\partial}{\partial x}\bigg(\rho\frac{\partial^2}{\partial x^2} \log \rho\bigg),
\end{equation}
\begin{equation}
\frac{\partial^2 n}{\partial t^2}-\frac{\partial^2 n}{\partial x^2}=\frac{\partial^2 \rho}{\partial x^2},
\end{equation}
which are equivalent to the Zakharov equation (1.3a)-(1.3b) as long as the solutions are smooth.

\section{Reduction}
In order to solve equations~(\ref{E:Zakharov-1})-(\ref{E:Zakharov-3}),
we first split $n$ into positive and negative frequency parts according to
\[
n_{\pm}=n\pm i \Lambda^{-1}\partial_tn
\]
where $\Lambda=(-\Delta+\varepsilon^2\Delta^2)^{1/2}\equiv(-\Le)^{1/2}$.
Thus the quantum Zakharov
~\eqref{E:Zakharov-1},~\eqref{E:Zakharov-2},~\eqref{E:Zakharov-3} can be
transformed into the first order system
\begin{subequations}
\begin{align}
\label{E:qZakharov-2-1} & iE_t+\Le E=\frac{1}{2}(n_{+}+n_{-})E,\\
\label{E:qZakharov-2-2} & (i\partial_t\mp\Lambda)n_{\pm}=\mp\Lambda^{-1}(\Delta |E|^2),\\
\label{E:qZakharov-2-3} & E(0,x)=E_0(x),\quad n_{\pm}(0,x)=n_{0\pm}(x)=n_0(x)\pm i\Lambda^{-1}n_1(x).
\end{align}
\end{subequations}
First, we briefly review  Bourgain's method~\cite{Bo93}. The presentation
here is closely related to that of Ginibre, Tsutumi and Velo~\cite{GTV97}.
We want to solve the equation of the type
\begin{equation}\label{E:evolution-equation}
i\partial_t u-\Phi(-i\nabla)u=f(u)
\end{equation}
where $\Phi$ is a real function (or a real symmetric matrix valued function) defined in
$\mathbb{R}$ and $f$ is a nonlinear function. In our case, $u$ will be replaced by
$(E,n_{+},n_{-})$ and $\Phi(\xi)$ will be a diagonal matrix with entries
$(\Phie{\xi},\sqrt{\Phie{\xi}},-\sqrt{\Phie{\xi}})$
where $\Phie{\xi}=\xi^2+\varepsilon^2\xi^4$. The Cauchy problem
for~\eqref{E:evolution-equation} with  initial data $u(0)=u_0$ is rewritten as
the integral equation
\begin{equation}\label{E:evolution-equation-1}
\begin{split}
u(t)&=U(t)u_0-i\int_0^t U(t-s)f(u(s))ds \\
 &=U(t)u_0-iU*_{R}f(u)
\end{split}
\end{equation}
where $U(t)=\exp[-it\Phi(-i\nabla)]$ is the unitary group that solves the
linear equation and $*_{R}$ denotes the retarded convolution in time.
In order to solve the Cauchy problem locally in time for
some time interval $[-T,T]$, one introduces a time cut-off in~\eqref{E:evolution-equation-1}.
Let $\beta_1\in \mathit{C}^{\infty}_0$ be even, with $0\leq\beta_1\leq 1$, $\beta_1(t)=1$
for $-1\leq t\leq 1$ and $\beta_1(t)=0$ for $|t|\geq 2$ and let $\beta_{T}=\beta_{1}(t/T)$
for $0<T\leq 1$. Then one replaces the integral equation~\eqref{E:evolution-equation-1} by the cut-off equation
\[ 
u(t)=\beta_1(t)U(t)u_0-i\beta_{T}(t)\int_0^t U(t-s)f(\beta_{2T}(s)u(s))ds,
\]
since the solution of this equation is equal to the locally in time solution
of~~\eqref{E:evolution-equation-1}.

To solve the equation~\eqref{E:evolution-equation-1}, we define the Banach spaces $X=X^{s,b}$ 
as spaces of functions so that $U(-t)u$ belongs to the Sobolev space $H=H^{s,b}$, i.e.,
\begin{equation}\label{E:Bourgain-norm}
\normXs{u}:=\norm{U(-t)u}_{H^{s,b}}=\norm{\lr{\xi}^s\lr{\tau+\Phi(\xi)}^b 
\widehat{u}(\tau,\xi) }_{L^2}.
\end{equation}
By definition, we have
\[
\normXs{\beta_1U(t)u_0}=\norm{\beta_1u_0}_{H^{s,b}}=\norm{\beta_1(t)}_{H^b_t}\norm{u_0}_{H^s_x}.
\]
And we have the estimate for
$
\normXs{\beta_{T}(U*_{R}f(u))}.
$
\begin{lem}\label{L:GTV}\cite{GTV97}
Let $-1/2<b'\leq 0\leq b\leq b'+1$ and $T\leq 1$. Then
\[
\normXs{\beta_{T}(U*_{R}f(u))}\leq CT^{1-b+b'}\norm{f(u)}_{X^{s,b'}}.
\]
\end{lem}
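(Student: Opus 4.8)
The plan is to strip off the unitary group and reduce the whole estimate to a single one–variable estimate in the time variable. Since $U(t)=\exp[-it\Phi(-i\nabla)]$ obeys the group law $U(-t)U(t-s)=U(-s)$ and commutes with the spatial multiplier $\lr{\xi}^s$, conjugating the retarded Duhamel term by $U(-t)$ collapses it to a plain time–antiderivative:
\[
U(-t)\bigl[\beta_T(U*_Rf(u))\bigr](t)=\beta_T(t)\int_0^t g(s)\,ds,\qquad g(s):=U(-s)f(u(s)),
\]
and by the very definition of the norms, $\norm{g}_{H^{s,b'}}=\norm{f(u)}_{X^{s,b'}}$. Because the weight $\lr{\xi}^s$ is inert and the map acts only in $t$ at each fixed $\xi$, Plancherel in $\xi$ reduces everything to the scalar temporal estimate
\[
\norm{\beta_T(t)\int_0^t h(s)\,ds}_{H^b_t}\leq C\,T^{1-b+b'}\norm{h}_{H^{b'}_t}
\]
for a single function $h=h(t)$, which is the heart of the matter.

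To prove the temporal estimate I would pass to the Fourier side in $t$ and write
\[
\int_0^t h(s)\,ds=\frac{1}{2\pi}\int\frac{e^{it\tau}-1}{i\tau}\,\wh{h}(\tau)\,d\tau,
\]
then split the $\tau$–integral into the low-modulation region $\abs{\tau}\leq 1/T$ and the high-modulation region $\abs{\tau}>1/T$. On the low region I would Taylor expand $e^{it\tau}-1=\sum_{k\geq1}(it\tau)^k/k!$, which converges since $\abs{t\tau}\leq 2$ on $\mathrm{supp}\,\beta_T$. The $k$-th term factors as the rescaled bump $t^k\beta_T(t)$, with $\norm{t^k\beta_T}_{H^b_t}\lesssim T^{k+1/2-b}$ by scaling, multiplied by a scalar functional of $h$ supported in $\abs{\tau}\leq 1/T$, which Cauchy--Schwarz (using $b'\leq 0$, so $\lr{\tau}^{-b'}\lesssim T^{b'}$ there) bounds by $\lesssim (k!)^{-1}T^{-k+1/2+b'}\norm{h}_{H^{b'}_t}$. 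Each term thus contributes $(k!)^{-1}T^{1-b+b'}\norm{h}_{H^{b'}_t}$, and the series converges, giving the low-region bound.

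On the high region I would write $(e^{it\tau}-1)/(i\tau)=e^{it\tau}/(i\tau)-1/(i\tau)$. The second, ``constant'' piece contributes a multiple of the single bump $\beta_T(t)$ times $\int_{\abs{\tau}>1/T}\wh{h}(\tau)/(i\tau)\,d\tau$; here $\norm{\beta_T}_{H^b_t}\lesssim T^{1/2-b}$, while Cauchy--Schwarz in $\tau$ with the weight $\lr{\tau}^{-b'}\abs{\tau}^{-1}$ produces $\bigl(\int_{\abs{\tau}>1/T}\lr{\tau}^{-2b'-2}\,d\tau\bigr)^{1/2}\approx T^{b'+1/2}$, so the product is exactly $T^{1-b+b'}$. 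It is precisely this last integral that forces $b'>-1/2$ for convergence at infinity, while the one-derivative gain $\abs{\tau}^{-1}$ is what matches the hypothesis $b\leq b'+1$.

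I expect the genuine obstacle to be the \emph{modulated} piece $\beta_T(t)e^{it\tau}/(i\tau)$ on the high region, since the frequency shift by $\tau$ transports mass into the $\lr{\tau}^b$–heavy part of $H^b_t$. Here I would use that multiplication by $\beta_T$ is convolution with $\wh{\beta_T}$, together with the rapid decay and the scaling $\wh{\beta_T}(\tau)=T\wh{\beta_1}(T\tau)$, to show that the shifted bump has $H^b_t$ norm growing no faster than $\lr{\tau}^b\,T^{1/2-b}$; combining this with the gain $\abs{\tau}^{-1}\leq\lr{\tau}^{-1}$ and integrating against $\norm{h}_{H^{b'}_t}$ again yields $T^{1-b+b'}$, with the conditions $b\leq b'+1$ and $b'>-1/2$ consumed exactly as above. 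Assembling the low- and high-modulation bounds (using $T\leq 1$ to absorb any slightly better power) completes the scalar estimate, and Plancherel in $\xi$ then delivers the lemma.
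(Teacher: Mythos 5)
The paper itself gives no proof of this lemma --- it is quoted verbatim from Lemma 2.1 of Ginibre--Tsutsumi--Velo \cite{GTV97} --- so the comparison is really with the standard argument there. Your architecture is exactly that argument: conjugate by $U(-t)$ to reduce to the scalar temporal estimate, write $\int_0^t h= \frac{1}{2\pi}\int\frac{e^{it\tau}-1}{i\tau}\wh{h}(\tau)\,d\tau$, split at $\abs{\tau}=1/T$, Taylor expand on the low-modulation region, and separate $e^{it\tau}/(i\tau)-1/(i\tau)$ on the high region. The reduction, the Taylor-series piece, and the ``constant'' piece $\beta_T(t)\int_{\abs{\tau}>1/T}\wh{h}(\tau)/(i\tau)\,d\tau$ are all handled correctly, including the role of $b'>-1/2$.

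The gap is in the modulated high-modulation piece, and it is not cosmetic. Taking the $H^b_t$ norm inside the $\tau$-integral (Minkowski) and then applying Cauchy--Schwarz with the weight $\lr{\tau}^{b}\abs{\tau}^{-1}\lr{\tau}^{-b'}$ requires $\int_{\abs{\tau}>1/T}\lr{\tau}^{2(b-1-b')}d\tau<\infty$, i.e.\ $b-b'<1/2$; the lemma asserts the estimate for all $b-b'\leq 1$, and the paper actually uses it with $b\approx\tfrac12^{+}$, $b'\approx-\tfrac12^{+}$, so $b-b'\approx 1$ --- precisely where your integral diverges. Moreover, even where it converges, your stated bound $\norm{\beta_Te^{it\tau}}_{H^b_t}\lesssim\lr{\tau}^bT^{1/2-b}$ produces $T^{1/2-b}\cdot T^{1/2-b+b'}=T^{1-2b+b'}$, not the claimed $T^{1-b+b'}$. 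The repair is to keep the piece as a single function $w$ with $\wh{w}(\tau)=\mathbf{1}_{\abs{\tau}>1/T}\wh{h}(\tau)/(i\tau)$ and exploit almost-orthogonality rather than the triangle inequality: from $\lr{\sigma}^b\lesssim\lr{\sigma-\tau}^b+\lr{\tau}^b$ and Young's inequality applied to $\wh{\beta_T}\ast\wh{w}$ one gets
\[
\norm{\beta_T w}_{H^b_t}\lesssim \norm{\lr{\cdot}^b\wh{\beta_T}}_{L^1}\norm{w}_{L^2}+\norm{\wh{\beta_T}}_{L^1}\norm{w}_{H^b_t}\lesssim T^{-b}\norm{w}_{L^2}+\norm{w}_{H^b_t},
\]
and then the elementary sup bounds $\sup_{\abs{\tau}>1/T}\lr{\tau}^{-b'}\abs{\tau}^{-1}\approx T^{1+b'}$ and $\sup_{\abs{\tau}>1/T}\lr{\tau}^{b-b'}\abs{\tau}^{-1}\approx T^{1-b+b'}$ (the latter is exactly where $b\leq b'+1$ is consumed, not in the constant piece as you suggest) give $\norm{w}_{L^2}\lesssim T^{1+b'}\norm{h}_{H^{b'}_t}$ and $\norm{w}_{H^b_t}\lesssim T^{1-b+b'}\norm{h}_{H^{b'}_t}$, closing the estimate in the full range.
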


From above lemma  we can build the local well-posedness of the Cauchy
problem in space $X^{s,b}$ by the standard contraction mapping argument if
$1-b+b'>0$ and non-linear estimates
of the form
\begin{equation}\label{E:non-linear}
\norm{f(u)}_{X^{s,b'}}\leq C\norm{u}^n_{X^{s,b}}
\end{equation}
hold for some power $n$ depending on $f$.
We also require $b>\frac{1}{2}$ to ensure
\[
X^{s,b}\subset \mathit{C}(\mathbb{R},H^s)
\]
and this completes the local well-posedness in $H^s$ spaces.
For more details, we refer the readers to~\cite{GTV97} or~\cite{JC11} and references therein.


Within this framework, we can solve~\eqref{E:qZakharov-2-1}--\eqref{E:qZakharov-2-3}.
Indeed, we define the operators
\begin{subequations}
\begin{align}
\label{E:Schrodinger}  \Se(t) E(x)=e^{it\Le}E(x)=\frac{1}{2\pi}\int e^{i(x\cdot\xi
-t\Phie{\xi})}\widehat{E}(\xi)d\xi, \\
\label{E:wave}  \We(t)n(x)=e^{\mp it\Lambda}n(x)=\frac{1}{2\pi}\int
e^{i(x\cdot\xi\pm t\sqrt{\Phie{\xi}} )}\widehat{n}(\xi) d\xi
\end{align}
\end{subequations}
where $\Phie{\xi}=\xi^2+\varepsilon^2\xi^4$.
The Cauchy problems for~(\ref{E:qZakharov-2-1})--(\ref{E:qZakharov-2-3}) can be solved by
showing the mapping $\Psi=(\Psi_0,\Psi_1)$ is a contracting mapping in a suitable Banach space
where $\Psi_0$ and $\Psi_1$ are defined respectively by
\begin{subequations}
\begin{align}
\label{E:Duhamel-1}  \Psi_0(E(t)) &=\Se(t)E_0-\frac{i}{2}\int_0^t \Se(t-s)(n_{+}(s)+n_{-}(s))E(s)ds, \\
\label{E:Duhamel-2}  \Psi_1(n_{\pm}(t)) &=\We(t)n_{0\pm}\mp i\int_0^t \We(t-s)\Lambda^{-1}(\Delta |E(s)|^2) ds.
\end{align}
\end{subequations}

Following~\eqref{E:Bourgain-norm}, the  norms $\normXk{E}$ and $\normXl{n}$ we need are given by
\[
\normXk{E}=\norm{\bracs{\xi}^k\bracs{\tau+\Phie{\xi}}^{b_1}\widehat{E}(\tau,\xi)}_{L^2_{\xi\tau}},
\]
\[
\normXl{n}=\norm{\bracs{\xi}^l\bracs{\tau\pm\sqrt{\Phie{\xi}}}^b\widehat{n}(\tau,\xi)}_{L^2_{\xi\tau}}.
\]
It is worth noting that the calculations of norms $\normXk{E}$ and $\normXl{n}$ can be done
by the duality argument.  Observe that $E\in X^{k,b_1}$ if and only if
$(1+|\xi|)^k(1+|\tau+\Phie{\xi}|)^{b_1}{\wh E}(\tau,\xi)\in L^2(\tau,\xi)$, that is
\[
\sup \langle {\wh E}(\tau,\xi) , d(\tau,\xi)\lr{\xi}^k\lr{\tau+\Phie{\xi}}^{b_1} \rangle
=\normXk{E}
\]
where the supremum is taken over all $d\in L^2(\tau,\xi)$  with $\|d\|_{L^2}\leq 1$.

For our purpose, we will use the Strichartz inequality for the fourth order
Schr\"odinger equation in our analysis. We define the non-homogeneous
differentiation operator
\begin{equation}\label{E:fractional-differentiation}
\begin{split}
D_{\varepsilon}^{\alpha}f(x)&=:\int_{\mathbb{R}}e^{ix\xi}
(1+6\varepsilon^2\xi^2)^{\frac{\alpha}{2}}{\wh f}(\xi) d\xi\\
&=\int_{\mathbb{R}}e^{ix\xi}
\lr{\xi_{\varepsilon}}^{{\alpha}}{\wh f}(\xi) d\xi.
\end{split}
\end{equation}
The notation $\lr{\xi_{\varepsilon}}^{{\alpha}}$ is called the symbol of
differentiation operator $D_{\varepsilon}^{\alpha}$.
The Strichartz inequality we need is the following.
\begin{lem} For $(t,x)\in\mathbb{R}^2$, we have
\[
\|D_{\varepsilon}^{1/2} \Se(t)E_0 \|_{L^4_t L^{\infty}_x}
 \leq C\|E_0\|_{L^2_x}
\]
\end{lem}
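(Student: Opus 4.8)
The plan is to deduce this Strichartz estimate from a pointwise--in--time dispersive bound for the propagator, combined with the $TT^*$ method and the Hardy--Littlewood--Sobolev inequality. Write $TE_0=D_{\varepsilon}^{1/2}\Se(t)E_0$, so that the claim is $\norm{TE_0}_{L^4_tL^\infty_x}\le C\norm{E_0}_{L^2_x}$. By duality this is equivalent to the boundedness of $TT^*$ from $L^{4/3}_tL^1_x$ to $L^4_tL^\infty_x$. Since $D_{\varepsilon}^{1/2}$ is a Fourier multiplier with real symbol (hence self--adjoint) commuting with the unitary group $\Se(t)$, and $\Se(t)^*=\Se(-t)$, the operator $TT^*$ acts by $TT^*G(t)=\int_{\mathbb R}D_{\varepsilon}\Se(t-s)G(s)\,ds$, a convolution in $x$ with kernel $D_{\varepsilon}\Se(t-s)$. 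Because we only need the homogeneous estimate (there is no retarded time integral here), no Christ--Kiselev truncation is required, and it suffices to control this full--line operator.

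The key ingredient is therefore the dispersive estimate
\begin{equation}\label{E:disp}
\norm{D_{\varepsilon}\Se(t)g}_{L^\infty_x}\le C\abs{t}^{-1/2}\norm{g}_{L^1_x},\qquad t\neq0,
\end{equation}
with $C$ independent of $\varepsilon$. Granting \eqref{E:disp}, Young/Minkowski gives $\norm{TT^*G(t)}_{L^\infty_x}\le C\int_{\mathbb R}\abs{t-s}^{-1/2}\norm{G(s)}_{L^1_x}\,ds$, and the Hardy--Littlewood--Sobolev inequality in the time variable, with the admissible exponent relation $\tfrac14=\tfrac34-\tfrac12$, maps $L^{4/3}_t$ into $L^4_t$. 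This yields $\norm{TT^*G}_{L^4_tL^\infty_x}\lesssim\norm{G}_{L^{4/3}_tL^1_x}$ and hence the lemma.

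It remains to prove \eqref{E:disp}, which amounts to bounding the oscillatory integral
\[
I_{\varepsilon}(t,x)=\int_{\mathbb R}e^{i(x\xi-t\Phie{\xi})}\lr{\xi_{\varepsilon}}\,d\xi,\qquad \lr{\xi_{\varepsilon}}=(1+6\varepsilon^2\xi^2)^{1/2},
\]
uniformly in $x$ by $C\abs{t}^{-1/2}$ (taking $t>0$; the case $t<0$ follows by conjugation). First, the scaling $\xi\mapsto\xi/\varepsilon$ gives $I_{\varepsilon}(t,x)=\varepsilon^{-1}I_1(t/\varepsilon^2,x/\varepsilon)$, so the $\varepsilon$--uniform bound follows once $\sup_y\abs{I_1(s,y)}\lesssim\abs{s}^{-1/2}$ is known; thus we may set $\varepsilon=1$ and work with $\phi_1(\xi)=\xi^2+\xi^4$. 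The crucial structural fact, reflecting the convexity emphasized throughout the paper, is the identity $\phi_{\varepsilon}''(\xi)=2(1+6\varepsilon^2\xi^2)=2\lr{\xi_{\varepsilon}}^2$: the square of the amplitude is proportional to the curvature of the phase. Consequently the phase $\Psi(\xi)=x\xi-t\Phie{\xi}$ is strictly concave with a single nondegenerate stationary point $\xi_0$ fixed by $\phi_{\varepsilon}'(\xi_0)=x/t$, and the formal stationary phase contribution there is $\lr{(\xi_0)_{\varepsilon}}\,\abs{\Psi''(\xi_0)}^{-1/2}=\lr{(\xi_0)_{\varepsilon}}\,(2t\lr{(\xi_0)_{\varepsilon}}^2)^{-1/2}=(2t)^{-1/2}$, so the amplitude growth is exactly cancelled by the curvature.

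To turn this heuristic into a proof I would decompose the $\xi$--line dyadically. On the block adjacent to $\xi_0$, van der Corput's second--derivative lemma with $\abs{\Psi''}\approx t\lr{\xi_{\varepsilon}}^2$, and with both the amplitude and its total variation over the block of size $O(\lr{\xi_{\varepsilon}})$, gives a contribution $\lesssim(t\lr{\xi_{\varepsilon}}^2)^{-1/2}\lr{\xi_{\varepsilon}}=t^{-1/2}$; on blocks away from $\xi_0$ the phase is nonstationary, $\abs{\Psi'}$ is bounded below, and repeated integration by parts produces decay. The main obstacle is precisely this last point: the crude van der Corput bound is $t^{-1/2}$ on \emph{every} dyadic block, and there are infinitely many, so one must quantify how the separation of $\xi$ from $\xi_0$ (again exploiting the strict monotonicity of $\phi_{\varepsilon}'$) yields a gain that is summable in the dyadic index, uniformly in $x$ and $\varepsilon$. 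Summing the near and far contributions then gives $\sup_x\abs{I_{\varepsilon}(t,x)}\lesssim t^{-1/2}$, which establishes \eqref{E:disp} and completes the proof.
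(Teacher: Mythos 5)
Your plan is the right one, and it is in fact how the result is actually established in the literature: the paper does not prove this lemma itself, but quotes it as the case $\phi(\xi)=\Phie{\xi}=\xi^2+\varepsilon^2\xi^4$, $\theta=1$ of Theorem 2.1 of Kenig--Ponce--Vega \cite{KPV91}, whose proof is precisely your scheme of $TT^*$, a $|t|^{-1/2}$ dispersive kernel bound, and Hardy--Littlewood--Sobolev in time. Your identification of the identity $\phi_{\varepsilon}''(\xi)=2\lr{\xi_{\varepsilon}}^2$ (so that $D_{\varepsilon}^{1/2}$ has symbol $(\phi_{\varepsilon}''/2)^{1/4}$ and the $TT^*$ amplitude has symbol $(\phi_{\varepsilon}''/2)^{1/2}$), and the rescaling $\xi\mapsto\xi/\varepsilon$ reducing the $\varepsilon$-uniformity to the case $\varepsilon=1$, are both correct and are exactly the reasons the lemma is a special case of the cited theorem.

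The genuine gap is the one you flag yourself: the kernel bound $\sup_x|I_{\varepsilon}(t,x)|\lesssim|t|^{-1/2}$ is asserted but not proved, and it is the entire analytic content of the lemma. Moreover, the difficulty is not mere bookkeeping: convexity of the phase together with the exact cancellation of the amplitude against $|\Psi''|^{1/2}$ at the stationary point is \emph{not} by itself sufficient. Writing the kernel as $(2|t|)^{-1/2}\int e^{i\Psi(\xi)}|\Psi''(\xi)|^{1/2}\,d\xi$, the desired bound would follow from a ``weighted van der Corput'' inequality $\bigl|\int e^{i\psi}|\psi''|^{1/2}\,d\xi\bigr|\le C$ for convex $\psi$ with an absolute constant $C$; but that inequality is false in this generality (take $\psi''$ to be $N$ narrow bumps of height $N$ and width $1/N$, separated by gaps of length comparable to $1$ tuned so that the phases at the bumps align: each bump contributes about $N^{-1/2}$ coherently, so the integral has size $N\cdot N^{-1/2}=N^{1/2}\to\infty$). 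So closing your ``main obstacle'' genuinely requires structure of $\Phie{\xi}$ beyond convexity --- for instance that $\phi_{\varepsilon}''$ is monotone on each half-line and essentially constant on dyadic blocks, so that the nonstationary blocks gain a summable factor from the growth of $|\phi_{\varepsilon}'(\xi)-\phi_{\varepsilon}'(\xi_0)|$ --- and one must also phrase the kernel bound uniformly over frequency truncations, since $I_{\varepsilon}$ is only conditionally convergent. These are exactly the points encoded in the hypotheses of Theorem 2.1 of \cite{KPV91}; the most economical way to complete your argument is to verify those hypotheses for $\xi^2+\varepsilon^2\xi^4$ and invoke that theorem, which is what the paper does.
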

This lemma is a special case of Theorem 2.1 of~\cite{KPV91} where we take
$\phi(\xi)=\Phie{\xi}=\xi^2+\varepsilon^2\xi^4$ and $\theta=1$. The readers can
also see~\cite{BKS00,Pau07} for the Strichartz inequality of
the fourth order Schr\"odinger equation.

Similar to the Lemma 2.3 of~\cite{GTV97}, we have the following emdedding inequality
which is implied by Strichartz inequality.
\begin{lem}
 For any $b>1/2$, the inequality
\begin{equation}\label{E:Strichartz}
\|D_{\varepsilon}^{1/2} E(t,x) \|_{L^4_t L^{\infty}_x}
\leq C\|E(t,x)\|_{X^{0,b}}
\end{equation}
holds for all $E\in X^{0,b}$.
\end{lem}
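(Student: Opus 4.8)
The plan is to prove this as a \emph{transfer principle}: the preceding lemma already furnishes the Strichartz estimate for the free propagator $\Se(t)$ applied to data in $L^2_x$, and we upgrade it to a space-time bound on $X^{0,b}$ by writing an arbitrary $E\in X^{0,b}$ as a continuous superposition of modulated free evolutions indexed by the modulation variable $\sigma=\tau+\Phie{\xi}$. The weight $\lr{\sigma}^{b}$ built into the $X^{0,b}$ norm then pays for recombining these pieces, and the hypothesis $b>1/2$ is exactly what makes that recombination (passing from an $L^2_\sigma$ bound to an $L^1_\sigma$ bound) converge. No Strichartz input beyond the single free-solution estimate is required.

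Concretely, I would start from the space-time Fourier inversion formula for $E$ and substitute $\tau=\sigma-\Phie{\xi}$ to obtain the representation
\[
E(t,x)=\frac{1}{2\pi}\int_{\Real} e^{it\sigma}\,\Se(t)g_\sigma(x)\,d\sigma,\qquad \wh{g_\sigma}(\xi)=\wh{E}(\sigma-\Phie{\xi},\xi),
\]
in which each inner factor $\Se(t)g_\sigma$ is a genuine free solution of the linearized fourth-order Schr\"odinger flow (this is just the definition~\eqref{E:Schrodinger} with data $g_\sigma$). Since $D_{\varepsilon}^{1/2}$ is a spatial Fourier multiplier it commutes with the $\sigma$-integral, with $\Se(t)$, and with the unimodular factor $e^{it\sigma}$. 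Applying Minkowski's integral inequality to pull the $L^4_tL^\infty_x$ norm inside the $d\sigma$-integral and discarding $|e^{it\sigma}|=1$ then gives
\[
\norm{D_{\varepsilon}^{1/2}E}_{L^4_tL^\infty_x}\leq \frac{1}{2\pi}\int_{\Real}\norm{D_{\varepsilon}^{1/2}\Se(t)g_\sigma}_{L^4_tL^\infty_x}\,d\sigma\leq C\int_{\Real}\norm{g_\sigma}_{L^2_x}\,d\sigma,
\]
where the final step applies the previous lemma for each fixed $\sigma$ (all $2\pi$ factors absorbed into $C$).

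It then remains to dominate $\int\norm{g_\sigma}_{L^2_x}\,d\sigma$ by $\norm{E}_{X^{0,b}}$. By Plancherel in $x$ we have $\norm{g_\sigma}_{L^2_x}\approx\norm{\wh{E}(\sigma-\Phie{\xi},\xi)}_{L^2_\xi}$, so inserting the factor $\lr{\sigma}^{-b}\lr{\sigma}^{b}$ and applying Cauchy--Schwarz in $\sigma$ yields
\[
\int_{\Real}\norm{g_\sigma}_{L^2_x}\,d\sigma\lesssim\Big(\int_{\Real}\lr{\sigma}^{-2b}\,d\sigma\Big)^{1/2}\Big(\int_{\Real}\lr{\sigma}^{2b}\norm{g_\sigma}^2_{L^2_\xi}\,d\sigma\Big)^{1/2}.
\]
The first factor is finite precisely because $b>1/2$, while reversing the substitution $\sigma=\tau+\Phie{\xi}$ (for each fixed $\xi$) in the second factor identifies it, via Fubini, with $\norm{\lr{\tau+\Phie{\xi}}^{b}\wh{E}(\tau,\xi)}_{L^2_{\tau\xi}}=\norm{E}_{X^{0,b}}$, completing the proof.

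The only genuinely delicate points are bookkeeping ones: justifying the interchange of the $d\sigma$-integral with the $L^4_tL^\infty_x$ norm and the Fubini step in the final identity, both of which are legitimate once one works with Schwartz $E$ and passes to the general case by density. The essential mechanism—and the unique place where $b>1/2$ is indispensable—is the Cauchy--Schwarz step, where integrability of $\lr{\sigma}^{-2b}$ is needed; everything else is a faithful translation of the free-wave estimate through the $X^{0,b}$ structure.
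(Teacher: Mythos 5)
Your argument is correct and is precisely the standard transfer principle that the paper invokes implicitly by citing Lemma 2.3 of~\cite{GTV97}: decompose $E$ into modulated free solutions via the substitution $\sigma=\tau+\Phie{\xi}$, apply the free Strichartz estimate slice by slice, and recombine with Cauchy--Schwarz in $\sigma$, where $b>1/2$ guarantees integrability of $\lr{\sigma}^{-2b}$. The paper gives no further detail, so your write-up simply supplies the proof it omits, by the same route.
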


Returning to our main stream of discussion for the local well-posedness of the Cauchy
problem, we have to prove the following.
\begin{thm}
The Cauchy problem~{\rm(\ref{E:qZakharov-2-1})-(\ref{E:qZakharov-2-3})} with initial data $(E_0,n_{0},n_{1})
\in H^{k}\bigoplus H^{l}\bigoplus H^{l-2}$ is locally well-posed in
$X^{k,b_1}\bigoplus X^{l,b}\bigoplus$ $X^{l-2,b}$ for any $(k,l)\in \mathbb{A}$ and 
$b>\frac{1}{2}\;,\;b_1>\frac{1}{2}$  close to $1/2$ but larger than $1/2$.
The set $\mathbb{A}$ is defined by
\begin{equation}\label{E:A-set}
\begin{split}
\mathbb{A}&=\left\{(k,l)|-\frac{3}{2}<k-l<\frac{3}{2}\;,\;k\geq 0 \right\}\\
& \cup \left\{(k,l)|2k-l>-\frac{3}{2}\;,\;k+l>-\frac{3}{2}\;,\; -\frac{3}{4} <k<0\right\}.
\end{split}
\end{equation}
\end{thm}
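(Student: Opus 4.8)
The plan is to realize the map $\Psi=(\Psi_0,\Psi_1)$ of \eqref{E:Duhamel-1}--\eqref{E:Duhamel-2}, with the time cut-offs $\beta_1,\beta_T,\beta_{2T}$ inserted as in the reduction preceding Lemma~\ref{L:GTV}, as a contraction on a ball of the product space $X^{k,b_1}\oplus X^{l,b}_{+}\oplus X^{l,b}_{-}$, and then to invoke the contraction mapping principle. I would first dispose of the free (data) terms: by the identity $\normXs{\beta_1 U(t)u_0}=\norm{\beta_1}_{H^b_t}\norm{u_0}_{H^s_x}$ recorded before Lemma~\ref{L:GTV}, one has $\normXk{\beta_1\Se(t)E_0}\lesssim\norm{E_0}_{H^k}$, and, since the symbol $\sqrt{\Phie{\xi}}$ of $\Lambda$ behaves like $\eps|\xi|^2$ at high frequency so that $\Lambda^{-1}$ gains two derivatives, $\normXl{\beta_1\We(t)n_{0\pm}}\lesssim\norm{n_{0\pm}}_{H^l}\lesssim\norm{n_0}_{H^l}+\norm{\Lambda^{-1}n_1}_{H^l}$, matching the initial-data space $H^k\oplus H^l\oplus H^{l-2}$ (all constants may depend on $\eps$).

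Next I would estimate the Duhamel (retarded-convolution) terms. Applying Lemma~\ref{L:GTV} to $\Psi_0$ with $s=k$ and $b=b_1$, and to $\Psi_1$ with $s=l$, reduces them to the two bilinear estimates
\begin{align*}
\norm{(n_{+}+n_{-})E}_{X^{k,b'}} &\lesssim \big(\normXlp{n_{+}}+\normXlm{n_{-}}\big)\,\normXk{E},\\
\norm{\Lambda^{-1}\Delta\abs{E}^2}_{X^{l,b'}_{\pm}} &\lesssim \normXk{E}^2,
\end{align*}
where $b'$ is taken slightly above $b_1-1$ in the first and above $b-1$ in the second (both admissible in Lemma~\ref{L:GTV} because $b_1,b$ are close to $1/2$ from above, so $b_1-1,b-1\in(-1/2,0)$). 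These estimates, which I claim hold precisely for $(k,l)\in\mathbb{A}$, are the real content of the theorem and are exactly the bilinear lemmas to be proved in the later sections.

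Granting the two bilinear estimates, the contraction closes in the standard way. On the ball of radius $R\approx\norm{E_0}_{H^k}+\norm{n_0}_{H^l}+\norm{\Lambda^{-1}n_1}_{H^l}$ the nonlinear part of $\Psi$ is bounded by $CT^{\theta}R^2$ for a positive power $\theta$ arising from the factor $T^{1-b+b'}$ of Lemma~\ref{L:GTV} (with the appropriate $b,b'$ in each component); the strict positivity of $\theta$ --- obtained precisely by proving the bilinear estimates at a $b'$ a little larger than the endpoint, which is the role of the ``$B-$'' room --- lets one choose $T$ small (depending on $R$ and $\eps$) so that $\Psi$ maps the ball into itself, while the same bilinear bounds applied to differences yield a contraction factor $<1$. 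The unique fixed point is the desired local solution; the embedding $X^{s,b}\subset C(\Real;H^s)$ for $b>1/2$ (hence the hypotheses $b,b_1>1/2$) furnishes continuity in time, and Lipschitz dependence on the data follows from the same difference estimates.

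The routine ingredients here are the free-term bounds and the abstract fixed-point mechanics; the genuine obstacle is the validity of the two bilinear estimates on the whole region $\mathbb{A}$. The delicate regime is the part of $\mathbb{A}$ with $-3/4<k\le-1/4$, where the Strichartz bound \eqref{E:Strichartz} alone does not suffice: there one must instead exploit the convexity of the convoluted phase function built from $\Phie{\cdot}$ to quantify the frequency separation of the interacting ``second order'' and ``fourth order'' waves, which is the analysis carried out in Sections~4--6.
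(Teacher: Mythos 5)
Your proposal is correct and follows essentially the same route as the paper: the paper's own proof is just a sketch that reduces the theorem to the two bilinear estimates of Lemma~\ref{L:bilinear-1} and Lemma~\ref{L:bilinear-2} via Lemma~\ref{L:GTV} and the standard contraction argument, which is exactly what you spell out (with the free-term and fixed-point details made explicit). The only caveat is that the region $\mathbb{A}$ is the intersection of the two regions in those lemmas, a point worth stating explicitly rather than asserting that each bilinear estimate holds "precisely" on $\mathbb{A}$.
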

\begin{proof}
Here we sketch the proof.
The discussion before~\eqref{E:Schrodinger} and~\eqref{E:wave} indicates that we need the corresponding estimates
of~\eqref{E:non-linear} in our setting, i.e.,
\[
\|n_{\pm}E\|_{X^{k,b_1'}}\leq C(\varepsilon)\normXl{n_{\pm}}\normXk{E}
\]
and
\[
\|\Lambda^{-1}(\Delta |E|^2)\|_{X^{l,b'}_{\pm}}\leq C(\varepsilon)\normXk{E}^2.
\]
These nonlinear estimates will be given by Lemma~\ref{L:bilinear-1} and
Lemma~\ref{L:bilinear-2}.
\end{proof}

\begin{lem}\label{L:bilinear-1} For any $(k,l)$ which is inside the union of $\{(k,l)|k-l< \frac{3}{2},k\geq 0\}$
and  $\{(k,l)|k+l>-\frac{3}{2}, k<0\}$, we have
\begin{equation}\label{E:bilinear-1}
\|n_{\pm}E\|_{X^{k,b_1'}}\leq C(\varepsilon)\normXl{n_{\pm}}\normXk{E}
\end{equation}
by choosing $b>1/2,b_1>1/2$ close to $1/2$,  $b'<-1/2,b_1'>-1/2$ 
close to $-1/2$ (see Remark~\ref{R:3.6}  below) and
$-1/2<b_1'\leq 0\leq b_1\leq 1+b_1'$, $-1/2<b'\leq 0\leq b\leq 1+b'$.
\end{lem}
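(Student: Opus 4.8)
The plan is to reduce the bilinear bound to a weighted trilinear estimate by duality, and then to control it through a dichotomy between the regime where the wave frequency $\xi_1$ and the Schr\"odinger frequency $\xi_2$ are comparable (where the Strichartz embedding closes the estimate) and the regime where they are separated (where one must instead use Cauchy--Schwarz together with the convexity of $\Phie{\cdot}$). First I would invoke the duality characterization of the $\normXk{\cdot}$ norm recorded before~\eqref{E:fractional-differentiation}: writing $\widehat{n_{\pm}E}(\tau,\xi)=\int_{\ast}\widehat{n_{\pm}}(\tau_1,\xi_1)\widehat{E}(\tau_2,\xi_2)$, where $\ast$ denotes the convolution constraint $\xi=\xi_1+\xi_2,\ \tau=\tau_1+\tau_2$, and introducing the normalized functions
\[
f=\lr{\xi_1}^l\lr{\tau_1\pm\sqrt{\Phie{\xi_1}}}^{b}\,\widehat{n_{\pm}}(\tau_1,\xi_1),\qquad g=\lr{\xi_2}^k\lr{\tau_2+\Phie{\xi_2}}^{b_1}\,\widehat{E}(\tau_2,\xi_2),
\]
so that $\|f\|_{L^2}=\normXl{n_{\pm}}$ and $\|g\|_{L^2}=\normXk{E}$, the claim~\eqref{E:bilinear-1} is equivalent to the bound
\[
\Big|\int_{\ast} K\, \overline{d(\tau,\xi)}\, f(\tau_1,\xi_1)\, g(\tau_2,\xi_2)\,d\xi_1 d\xi_2 d\tau_1 d\tau_2\Big|\le C(\varepsilon)\,\|f\|_{L^2}\|g\|_{L^2}
\]
for every $d$ with $\|d\|_{L^2}\le 1$, where the multiplier is
\[
K=\frac{\lr{\xi}^k}{\lr{\tau+\Phie{\xi}}^{-b_1'}\,\lr{\xi_1}^l\lr{\tau_1\pm\sqrt{\Phie{\xi_1}}}^{b}\,\lr{\xi_2}^k\lr{\tau_2+\Phie{\xi_2}}^{b_1}}.
\]

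Next I would exploit the resonance structure. Setting the three modulation variables $\sigma=\tau+\Phie{\xi}$, $\sigma_1=\tau_1\pm\sqrt{\Phie{\xi_1}}$, and $\sigma_2=\tau_2+\Phie{\xi_2}$, the constraint $\tau=\tau_1+\tau_2$ yields the identity
\[
\sigma-\sigma_1-\sigma_2=\Phie{\xi_1+\xi_2}-\Phie{\xi_2}\mp\sqrt{\Phie{\xi_1}}=:H(\xi_1,\xi_2),
\]
so that $\max(\abs{\sigma},\abs{\sigma_1},\abs{\sigma_2})\gtrsim\abs{H}$. I would then decompose the domain according to which of the three modulations is largest and, in each case, spend the corresponding weight $\lr{\sigma_{\bullet}}^{\pm b}$ to absorb a power of $\abs{H}$. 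After a dyadic frequency localization $\abs{\xi_1}\sim N_1$, $\abs{\xi_2}\sim N_2$, $\abs{\xi}\sim N$, the comparable regime (say $N_1\lesssim N_2$, where the Schr\"odinger mode dominates) is handled by placing the $E$-factor and the output factor in $L^4_tL^{\infty}_x$ through the Strichartz embedding~\eqref{E:Strichartz} and the $n_{\pm}$-factor in $L^2_{t,x}$, with H\"older exponents $4,4,2$ in time and $\infty,\infty,2$ in space; the fractional gain $D_{\varepsilon}^{1/2}$ supplied by~\eqref{E:Strichartz} is exactly what compensates the derivative losses coming from $\lr{\xi}^k$ when $k\ge 0$, and summing the dyadic pieces converges precisely under $k-l<3/2$.

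For the separated regime the Strichartz bound is wasteful and, as emphasized in the introduction, fails to close for $-3/4<k<0$; here I would instead freeze one pair of variables, apply Cauchy--Schwarz in $(\tau,\xi)$ against $d$, and reduce matters to a single-variable integral of the form $\int d\xi_2\,\lr{\,\cdot\,}^{-2b_1}$ taken along the slice on which $\sigma$ and $\sigma_1$ are fixed, so that $\sigma_2$ becomes an affine function of $H$. This is the point at which the convexity of the convoluted phase enters: by $\phi_{\varepsilon}''(\xi)=2+12\varepsilon^2\xi^2\ge 2$ the map $\xi_2\mapsto H(\xi_1,\xi_2)$ has derivative $\partial_{\xi_2}H=\phi_{\varepsilon}'(\xi_1+\xi_2)-\phi_{\varepsilon}'(\xi_2)$ which is monotone and bounded below away from the diagonal, so the change of variables $\xi_2\mapsto H$ has a controlled Jacobian and the modulation factor becomes integrable. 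This furnishes the quantitative ``separation of waves'' estimate, and the explicit lower bounds on the Jacobian are where the negative powers of $\varepsilon$ in $C(\varepsilon)$ originate. The resulting dyadic gains close the sum exactly under the conditions $k+l>-3/2$ (and $2k-l>-3/2$) that govern the low-regularity range $k<0$, and the detailed realization of these bounds is what the key estimates of Sections~5 and~6 provide.

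\emph{Main obstacle.} The genuine difficulty is the separated regime at negative $k$, where the Schr\"odinger input carries negative regularity and no Strichartz estimate alone suffices. Everything hinges on converting the convexity lower bound for $\abs{\partial_{\xi_2}H}$ into an integrable modulation weight uniformly across all dyadic regions, and in particular on handling the transitional zone $\abs{\xi_1}\approx\abs{\xi_2}$ where $H$ may degenerate and the change of variables becomes singular; securing a uniform lower bound there, with the correct dependence on $\varepsilon$, is the technical heart of the argument.
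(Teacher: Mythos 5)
Your overall architecture (duality, a two-regime split, Strichartz in one regime and Cauchy--Schwarz plus convexity of the convoluted phase in the other) is the right one and matches the paper's strategy, but there are two genuine gaps: in where you draw the dividing line, and in how you close the separated regime. First, the paper's dichotomy is on the size of the \emph{wave} frequency alone: $|\xi|\leq 1$ versus $|\xi|>1$. Strichartz is used only when $|\xi|\leq 1$, precisely because there the weight $\lr{\xi_1}^k\lr{\xi_2}^{-k}\lr{\xi}^{-l}$ is bounded above and below and no derivative gain is required (the proof even remarks that the extra regularity of the fourth-order Strichartz estimate is not needed in this lemma). Your plan instead sends the whole regime $N_1\lesssim N_2$ to Strichartz. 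There the weight can be as large as $N_1^{-l}\sim N_1^{3/2-}$ when $k\geq 0$ and $l$ is near $k-3/2$, while the embedding \eqref{E:Strichartz} supplies at most a gain of $D_{\varepsilon}^{1/2}$ per factor (only $D_{\varepsilon}^{1/4}$ after interpolating to $L^{8/3}_tL^4_x$), i.e.\ at most one derivative in total, so the dyadic sum does not close. (Also, your H\"older exponents $(\infty,\infty,2)$ in space do not sum to $1$; the paper uses $L^{8/3}_tL^4_x\times L^4_tL^2_x\times L^{8/3}_tL^4_x$.) All of $|\xi|>1$, including the part where the wave and Schr\"odinger frequencies are comparable, must go through the Cauchy--Schwarz kernel bound $C_1(\varepsilon)$ of \eqref{E:C-1} and Lemma~\ref{L:key-lemma-1}.

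Second, in the separated regime your closing step --- ``the change of variables $\xi_2\mapsto H$ has a controlled Jacobian and the modulation factor becomes integrable'' --- does not work as stated. The relevant modulation exponent after integrating out $\tau_1$ is $2B=-2b_1'<1$, so $\lr{H}^{-2B}$ is \emph{not} integrable on the line, and a lower bound $|\partial_{\xi_2}H|\gtrsim|\xi|$ by itself yields a divergent integral; the paper explicitly notes in the introduction that the GTV-style change-of-variables argument is blocked by the biharmonic term. What is actually needed, and what Lemma~\ref{L:key-lemma-1} proves, is the quantitative decay $\int\lr{f_{\tau,\xi}(\eta)}^{-2B}d\eta\leq C(B,\varepsilon)\lr{\Gamma}^{-(2B-\frac{1}{4})}$ with $\Gamma=\xi^2+\varepsilon^2\xi^4$, obtained by substituting $\xi_1=\frac{1}{2}\xi+\eta$ so that the resonance becomes the cubic $\tau+(2\xi+\varepsilon^2\xi^3)\eta+4\varepsilon^2\xi\eta^3$, splitting according to the sign of $\tau$, locating the root $R$, decomposing $[0,R]$, $[R,R+\xi]$, $[R+\xi,\infty)$, and using convexity to dominate $f_{\tau,\xi}$ from below by secant lines on each piece. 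You correctly identify convexity as the key structural input, but the mechanism by which it produces a finite, $\Gamma$-decaying bound --- which is the actual technical heart of the lemma --- is missing from your argument.
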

\begin{rem}\label{R:3.6}
In order to get the optimal region of $(k,l)$ in the above lemma as well as in the below lemma, we can for example take $b=b_1=\frac{1}{2}+\frac{1}{4}\theta$ and $b'=b_1'=-\frac{1}{2}+\theta$ where $\theta$
is a small positive number arbitrarily close to zero. This choice also gives $1-b+b'>0,\;1-b_1+b_1'>0$ which is need in Lemma~\ref{L:GTV} to ensure the power of 
$T$ is positive.
However we keep notations $b,b',b_1,b_1'$ in these two lemmas in order to track
the exponents easily.
\end{rem}

\begin{proof}
Let $-c_1=b_1'$.
To estimate $\norm{n_{\pm}E}_{X^{k,-c_1}}$, we take its scalar product with a generic function $E_1$ in $X^{-k,c_1}$, i.e.
\begin{equation}\label{E:bilinear-1-dual}
\begin{split}
&\int {\overline{\wh E}}_1(\tau_1,\xi_1)\widehat{n}_{\pm}\ast\widehat{E}(\tau_1,\xi_1) d\tau_1 d\xi_1 \\
&=\iint {\overline{\wh E}}_1(\tau_1,\xi_1) \wh{n}_{\pm}(\tau,\xi) \wh{E}(\tau_2,\xi_2) d\tau_1 d\xi_1d\tau_2 d\xi_2
\end{split}
\end{equation}
where
\begin{equation}\label{E:conservation-convolution}
\xi=\xi_1-\xi_2 \;,\quad  \tau=\tau_1-\tau_2.
\end{equation}

Let $\wh{v}_1=\lr{\xi_1}^{-k}\lr{\tau_1+\Phieo}^{c_1}{\overline{\wh E}}_1(\tau_1,\xi_1)=:\lr{\xi_1}^{-k}\lr{\sigma_1}^{c_1}{\overline{\wh E}}_1(\tau_1,\xi_1)$, then $\|\widehat{v}_1\|_{L^2}=\|E_1\|_{X^{-k,c_1}}$.  Similarly we define $\widehat{v_2}=\lr{\xi_2}^k\lr{\sigma_2}^{b_1}\widehat{E}(\tau_2,\xi_2)$ and
$\widehat{v}=\lr{\xi}^l\lr{\sigma}^b\widehat{n_{\pm}}(\tau,\xi)$
where $\sigma_2=\tau_2+\Phiew,\sigma=\tau\pm
\sqrt{\Phie{\xi}}$ so that $\|\wh{v_2}\|_{L^2}=\normXk{E}$ and
$\norm{\wh v}_{L^2}=\normXl{n_{\pm}}$ .

We define~\eqref{E:bilinear-1-dual} as
\[ 
\begin{split}
S&:=\iint
 \widehat{v}_1(\tau_1,\xi_1)\frac{\lr{\xi_1}^k}{\lr{\tau_1+\Phieo}^{c_1}}\widehat{v}(\tau,\xi)\frac{1}{\lr{\xi}^l\lr{\tau\pm\sqrt{\Phie{\xi}}}^{b}} \\
&{\hspace{4 cm}}\widehat{v}_2(\tau_2,\xi_2)\frac{1}{\lr{\xi_2}^k \lr{\tau_2+\Phie{\xi_2}}^{b_1}} d\tau_1 d\xi_1d\tau_2 d\xi_2 \\
&=\iint\frac{\hv\hv_1\hv_2\lr{\xi_1}^k}
{\lr{\sigma}^b\lr{\sigma_1}^{c_1}\lr{\sigma_2}^{b_1}\lr{\xi_2}^k\lr{\xi}^l}d\tau_1 d\xi_1d\tau_2 d\xi_2.
\end{split}
\]
Thus to prove~\eqref{E:bilinear-1} is equivalent to showing that
\begin{equation}\label{E:bound-S}
|S|\leq C(\varepsilon) \norm{v}_{L^2}\norm{v_1}_{L^2}\norm{v_2}_{L^2}.
\end{equation}
We shall prove~\eqref{E:bound-S} by discussing two different cases, i.e.
$|\xi|\leq 1$ and $|\xi|>1$.

\noindent{\bf Case 1.} $|\xi|\leq 1.$ \par
We note that $|\xi|\leq 1$ implies
\[
C_1(l,k)\leq \frac{\lr{\xi_1}^k}{\lr{\xi_2}^k\lr{\xi}^l}\leq C_2(l,k).
\]
Hence we only have to show that
\begin{equation}\label{E:bound-S_1}
|S_1|\leq C(\varepsilon) \norm{v}_{L^2}\norm{v_1}_{L^2}\norm{v_2}_{L^2}
\end{equation}
where
\[
S_1=\iint\frac{|\hv\hv_1\hv_2|}
{\lr{\sigma}^b\lr{\sigma_1}^{c_1}\lr{\sigma_2}^{b_1}}d\tau_1 d\xi_1d\tau_2 d\xi_2\,.
\]
The proof of~\eqref{E:bound-S_1} follows the same idea as in Lemma 3.2 of~\cite{GTV97}
with modification.
The slight difference here is that the Strichartz inequality of the fourth order Schr\"odinger
equation has more regularity than that of the Schr\"odinger equation. We shall not need 
this fact for the proof of~\eqref{E:bound-S_1} but it will be used in the next lemma.
Hence we write down the detail of the proof for the convenience of readers and for the later use.

Since $S_1$ is decreasing as a function of $b,b_1$ and $c_1$, it is sufficient to prove the case where
$b=\frac{1}{4}$ and $b_1=c_1=\frac{1}{4}+$. Then
\[
\begin{split}
S_1&=\int{\Big (}\frac{|\hv_1|}{\lr{\sigma_1}^{1/4+}}{\Big )}
{\Big (}\int\frac{|\hv\hv_2|}{\lr{\sigma}^{1/4}\lr{\sigma_2}^{1/4+}}
d\tau_2d\xi_2 {\Big )} d\tau_1d\xi_1\\
&=\int{\Big (}\frac{|\hv_1|}{\lr{\sigma_1}^{1/4+}}{\Big )}^{\vee}
{\Big (}\int\frac{|\hv\hv_2|}{\lr{\sigma}^{1/4}\lr{\sigma_2}^{1/4+}}
d\tau_2 d\xi_2 {\Big )}^{\vee} dt_1 dx_1\\
&=\int{\Big (}\frac{|\hv_1|}{\lr{\sigma_1}^{1/4+}}{\Big )}^{\vee}
{\Big (}\frac{|\hv|}{\lr{\sigma}^{1/4}} {\Big )}^{\vee}
{\Big (}\frac{|\hv_2|}{\lr{\sigma_2}^{1/4+}} {\Big )}^{\vee}dt_1 dx_1 \\
&\leq {\Big \|}{\Big (}\frac{|\hv_1|}{\lr{\sigma_1}^{1/4+}}{\Big )^{\vee}}
{\Big \|}_{L^{8/3}_tL^4_x}
 {\Big \|}{\Big (}\frac{|\hv|}{\lr{\sigma}^{1/4}}{\Big )^{\vee}}
{\Big \|}_{L^{4}_tL^2_x}
{\Big \|}{\Big (}\frac{|\hv_2|}{\lr{\sigma_2}^{1/4+}}{\Big )^{\vee}}
{\Big \|}_{L^{8/3}_tL^4_x}.
\end{split}
\]
Here
\[
\begin{split}
{\Big \|}{\Big (}\frac{|\hv|}{\lr{\sigma}^{1/4}}{\Big )^{\vee}}
{\Big \|}_{L^{4}_tL^2_x}
&= {\Big \|}\int e^{it\tau}\frac{|\wh{v}(\tau,\xi)|}
{\lr{\tau\pm\sqrt{\Phie{\xi}}}^{1/4}}d\tau{\Big \|}_{L^{4}_tL^2_{\xi}}\\
&\leq {\Big \|}\int e^{it\tau}\frac{|\wh{v}(\tau,\xi)|}
{\lr{\tau\pm\sqrt{\Phie{\xi}}}^{1/4}}d\tau{\Big \|}_{L^2_{\xi}L^{4}_t}
\;({\rm Minkowski\;inequality})\\
&\leq {\Big \|}\int e^{it\tau}\frac{|{\wh v}(\tau\mp\sqrt{\Phie{\xi}},\xi)|}{\lr{\tau}^{1/4}} d\tau {\Big \|}_{L^2_{\xi}L^{4}_t}\\
&=\|\lr{\partial_t}^{-1/4}(|{\wh v}( t,\xi)|e^{\pm it\sqrt{\Phie{\xi}}})\|_{L^2_{\xi}L^{4}_t}\\
&\leq C\||{\wh v}(\xi, t)|e^{\pm it\sqrt{\Phie{\xi}}}\|_{L^2_{\xi}L^2_t}
\quad ({\rm Sobolev\; imbedding})\\
&=C\|v\|_{L^2_{\xi}L^2_t}.
\end{split}
\]
We interpolate the equality $\||{\wh v}_j|^{\vee}\|_{L^2_tL^2_x}=\|v_j\|_{L^2_{\tau}L^2_{\xi}}\;,\;j=1,2$, ~\eqref{E:Strichartz} and the Strichartz-type inequality
 $\|(\lr{\sigma_j}^{-b}|{\wh v}_j|)^{\vee}\|_{L^4_tL^{\infty}_x}\leq
C\|D_{\varepsilon}^{-1/2} v_j\|_{L^2_{\tau}L^2_{\xi}}\;,\;b>1/2$, to obtain
\begin{equation}\label{E:Strichartz-inter}
\bigg\|\Big(\frac{|{\wh v}_j|}{\lr{\sigma_j}^{1/4+}}\Big)^{\vee}\bigg\|_{L^{8/3}_tL^{4}_x}\leq
C\|D_{\varepsilon}^{-1/4}v_j\|_{L^2_{\tau}L^2_{\xi}}
\leq C\| v_j\|_{L^2_{\tau}L^2_{\xi}}.
\end{equation}
Therefore~\eqref{E:bound-S_1} is proved.

\noindent{\bf Case 2.} $|\xi|>1$.\par
For simplicity of representation,
let $\zeta=(\tau,\xi)$ and $\zeta_i=(\tau_i,\xi_i)$ and note $\zeta=\zeta_1-\zeta_2$
from~\eqref{E:conservation-convolution}. We also let $K(\zeta_1,\zeta_2)$ equal
\[
\frac{\lr{\xi_1}^k}{\lr{\sigma}^b\lr{\sigma_1}^{c_1}\lr{\sigma_2}^{b_1}
\lr{\xi_2}^k\lr{\xi}^l}.
\]
Now the integral $S$ defined before Case 1. is of the form
\[
S=\iint \wh{v}(\zeta)\wh{v}_1(\zeta_1)\wh{v}_2(\zeta_2)K(\zeta_1,\zeta_2) d\zeta_1d\zeta_2.
\]

By the Schwarz inequality we have
\begin{equation}\label{E:Schwartz_inequality}
\begin{split}
|S|^2&\leq \norm{v}^2_{L^2}\int {\Big |}\int \wh{v}_1(\zeta_1)\wh{v}_2(\zeta_1-\zeta)K(\zeta_1,\zeta_1-\zeta)  d\zeta_1{\Big |}^2 d\zeta\\
&\leq \norm{v}^2_{L^2}{\Big\{}\sup\limits_{\zeta}\int|K(\zeta_1,\zeta_1-\zeta)|^2d\zeta_1 {\Big\}} \iint |\wh{v}_1(\zeta_1)\wh{v}_2(\zeta_1-\zeta)|^2
 d\zeta d\zeta_1\\
&= C_1^2(\varepsilon) \norm{v}^2_{L^2}\norm{v_1}^2_{L^2}\norm{v_2}^2_{L^2}
\end{split}
\end{equation}
where
\begin{equation}\label{E:C-1}
\begin{split}
 C_1(\varepsilon)&=\sup\limits_{\zeta,|\xi|\geq 1}\bigg(\int|K(\zeta_1,\zeta_1-\zeta)|^2d\zeta_1\bigg)^{1/2}\\
  &=\sup\limits_{\tau,|\xi|\geq 1}\bigg{\{}\frac{1}{\lr{\xi}^l\lr{\tau\pm\sqrt{\Phie{\xi}}}^{b}}\times \\
&\hspace{1cm}{\Big (}\int\frac{\lr{\xi_{1}}^{2k}d\tau_1d\xi_1}{\lr{\tau_1+\Phie{\xi_1}}^{2c_1}\lr{\tau_1-\tau
+\Phie{\xi_1-\xi}}^{2b_1}\lr{\xi_1-\xi}^{2k}}{\Big )}^{1/2}\bigg{\}}.
\end{split}
\end{equation}
Hence, the proof is completed if we can show that $C_1(\varepsilon)$ is bounded 
which will be proved in Lemma~\ref{L:C_1_bounded}.
\end{proof}

Another nonlinear estimate we need is the following.

\begin{lem}\label{L:bilinear-2}
If $(k,l)$ lies inside the union of the domains $\{(k,l)|k-l> -\frac{3}{2},k\geq 0\}$
and $\{(k,l)|2k-l>-\frac{3}{2}, -3/4< k<0\}$, then we have
\begin{equation}\label{E:bilinear-2}
\|\Lambda^{-1}(\Delta |E|^2)\|_{X^{l,b'}_{\pm}}\leq C(\varepsilon)\normXk{E}^2
\end{equation}
by choosing $b>1/2,b_1>1/2$ close to $1/2$, $b'<-1/2,b_1'>-1/2$ close to $-1/2$ such that $-1/2<b_1'\leq 0\leq b_1\leq 1+b_1'$ and $-1/2<b'\leq 0\leq b\leq 1+b'$
(see Remark~\ref{R:3.6}).
\end{lem}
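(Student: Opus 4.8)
The plan is to follow the duality-and-cases scheme of Lemma~\ref{L:bilinear-1}, with the roles of the multipliers reversed: now the output is a ``second order wave'' produced from the two ``fourth order waves'' $E$ and $\bar E$, so the resonance is governed by the convoluted phase $\Phie{\xi_1}-\Phie{\xi_2}$. Writing $-c=b'$, I would test $\Lambda^{-1}(\Delta|E|^2)$ against a generic $n_1\in X^{-l,c}_{\pm}$ and express the pairing (with $\zeta=(\tau,\xi)$, $\zeta_j=(\tau_j,\xi_j)$ as in Lemma~\ref{L:bilinear-1}) as a trilinear form
\[
S=\iint \wh{w}(\zeta)\,\wh{v}_1(\zeta_1)\,\wh{v}_2(\zeta_2)\,K(\zeta_1,\zeta_2)\,d\zeta_1\,d\zeta_2,\qquad \zeta=\zeta_1-\zeta_2,
\]
where $\wh{w}=\lr{\xi}^{-l}\lr{\sigma}^{c}\overline{\wh{n}_1}$ so that $\norm{w}_{L^2}=\norm{n_1}_{X^{-l,c}_{\pm}}$, $\wh{v}_j=\lr{\xi_j}^{k}\lr{\sigma_j}^{b_1}\wh{E}$ so that $\norm{v_j}_{L^2}=\normXk{E}$, with $\sigma=\tau\pm\sqrt{\Phie{\xi}}$ and $\sigma_j=\tau_j+\Phie{\xi_j}$, and the kernel carries the symbol of $\Lambda^{-1}\Delta$,
\[
K(\zeta_1,\zeta_2)=\frac{\xi^2\,\lr{\xi}^{l}}{\sqrt{\Phie{\xi}}\;\lr{\sigma}^{c}\lr{\sigma_1}^{b_1}\lr{\sigma_2}^{b_1}\lr{\xi_1}^{k}\lr{\xi_2}^{k}}.
\]
Proving \eqref{E:bilinear-2} is then equivalent to $\abs{S}\leq C(\varepsilon)\norm{w}_{L^2}\norm{v_1}_{L^2}\norm{v_2}_{L^2}$, and I would split according to the output frequency, $\abs{\xi}\leq1$ and $\abs{\xi}>1$.

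In the region $\abs{\xi}\leq1$ the troublesome multiplier is in fact harmless, since $\xi^2/\sqrt{\Phie{\xi}}=\abs{\xi}/\sqrt{1+\varepsilon^2\xi^2}\leq\abs{\xi}\leq1$ and $\lr{\xi}^{l}\approx1$; thus $K$ reduces, up to constants, to the three modulation weights times $\lr{\xi_1}^{-k}\lr{\xi_2}^{-k}$. For $k\geq0$ these frequency weights are bounded and the bound follows essentially verbatim from Case~1 of Lemma~\ref{L:bilinear-1}: H\"older with the wave factor in $L^{4}_tL^{2}_x$ (Minkowski followed by the Sobolev embedding in $t$) and the two Schr\"odinger factors in $L^{8/3}_tL^{4}_x$, the latter controlled by the interpolated Strichartz estimate \eqref{E:Strichartz-inter}. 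For $-\tfrac14\leq k<0$ the weights $\lr{\xi_1}^{-k}\lr{\xi_2}^{-k}$ grow, and this is precisely where I would invoke the \emph{additional} regularity of the fourth order Strichartz inequality remarked upon after \eqref{E:bound-S_1}: the gain $D_{\varepsilon}^{-1/4}$ appearing in \eqref{E:Strichartz-inter} absorbs one factor $\lr{\xi_j}^{1/4}$ per Schr\"odinger wave, which closes the estimate down to $k=-\tfrac14$.

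In the region $\abs{\xi}>1$ the two waves are far apart and Strichartz is no longer enough; instead I would apply the Schwarz inequality exactly as in \eqref{E:Schwartz_inequality}, pulling out $\norm{w}_{L^2}$ and reducing everything to the boundedness of
\[
C_2(\varepsilon)=\sup_{\zeta,\;\abs{\xi}>1}\Big(\int\abs{K(\zeta_1,\zeta_1-\zeta)}^{2}\,d\zeta_1\Big)^{1/2}.
\]
Integrating in $\tau_1$ first (legitimate because $2b_1>1$, via $\int\lr{\tau_1-a}^{-2b_1}\lr{\tau_1-b}^{-2b_1}\,d\tau_1\lesssim\lr{a-b}^{-2b_1}$) collapses the two Schr\"odinger modulations into the single weight $\lr{\tau+\Phie{\xi_1}-\Phie{\xi_2}}^{-2b_1}$ and leaves an integral over $\xi_1$ against the convoluted phase $\Phie{\xi_1}-\Phie{\xi_2}$ with $\xi_2=\xi_1-\xi$. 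The decisive step is the substitution $u=\Phie{\xi_1}-\Phie{\xi_1-\xi}$: because $\phi_{\varepsilon}(\xi)=\xi^2+\varepsilon^2\xi^4$ is convex, $\phi_{\varepsilon}'$ is strictly increasing and the Jacobian
\[
\phi_{\varepsilon}'(\xi_1)-\phi_{\varepsilon}'(\xi_1-\xi)=\xi\big(2+4\varepsilon^2(\xi_1^2+\xi_1\xi_2+\xi_2^2)\big)
\]
is bounded below, which quantifies the separation of the two waves and renders the $\xi_1$-integral finite. This convexity input is what reaches the low-regularity band $-\tfrac34<k\leq-\tfrac14$, unattainable by Strichartz alone, and the detailed verification that $C_2(\varepsilon)\lesssim\varepsilon^{-2}$ I would defer to the two-fourth-order-wave estimates of Section~5, namely Lemmas~\ref{L:key-lemma-1}--\ref{L:around_xi} and~\ref{L:C_2_bounded}.

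The hard part will be the control of $C_2(\varepsilon)$ in the high-high-to-low interaction, where $\abs{\xi_1}$ and $\abs{\xi_2}$ are both large while $\abs{\xi}$ sweeps over $(1,\infty)$: the biharmonic term $\varepsilon^2\xi^4$ destroys the elementary change of variable that sufficed for the classical Zakharov system in~\cite{GTV97}, and it is only the lower bound on the Jacobian furnished by the convexity of $\phi_{\varepsilon}$ that keeps the phase non-degenerate enough to close the estimate uniformly throughout the admissible region $2k-l>-\tfrac32$.
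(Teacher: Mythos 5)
Your setup and your treatment of the range $k>-\frac14$ match the paper: duality, the uniform bound $|\xi|^2(\Phie{\xi})^{-1/2}\le\varepsilon^{-1}$, Strichartz with the $D_{\varepsilon}^{-1/4}$ gain absorbing $\lr{\xi_j}^{1/4}$ for $|\xi|\le1$, and the Cauchy--Schwarz reduction to $C_2(\varepsilon)$ for $|\xi|>1$. The genuine gap is the band $-\frac34<k\le-\frac14$, which is precisely the new low-regularity content of the lemma. Your Strichartz argument, as you concede, stops at $k=-\frac14$ (for $|\xi|\le1$ one needs $\lr{\xi_1}^{-k}\lr{\xi_2}^{-k}\lesssim\lr{\xi_1}^{1/4}\lr{\xi_2}^{1/4}$, i.e.\ $k\ge-\frac14$), and the $C_2$ reduction cannot be pushed below $k=-\frac14$ either: the underlying two-fourth-order-wave estimate (Lemma~\ref{L:key-lemma-2}) is only proved for $k\ge-\frac14$ and its convergence condition $6B+4k>1$ fails well before $k=-\frac34$, while Lemma~\ref{L:C_2_bounded} is stated only for $k>-\frac14$. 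So in the band $-\frac34<k\le-\frac14$ neither of your two regimes yields a bound. The paper's missing ingredient is an \emph{asymmetric} application of Cauchy--Schwarz: in~\eqref{E:intgeral-W} one integrates over $(\zeta_1,\zeta_2)$ rather than $(\zeta_1,\zeta)$ and takes the supremum over a Schr\"odinger frequency $\zeta_2$, pulling out $\|v_2\|_{L^2}$ instead of the wave factor. This replaces $C_2(\varepsilon)$ by $C_3(\varepsilon)$ in~\eqref{E:C-3}, whose kernel couples one \emph{second order} wave with one fourth order wave; its boundedness is Lemma~\ref{L:C_3_bounded}, resting on all of Section~6 (Lemmas~\ref{L:key-lemma-3} and~\ref{L:around_xi}), and it is exactly there that the hypothesis $k\le-\frac14$ is used. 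Without this transposition the region $\{2k-l>-\frac32,\ -\frac34<k\le-\frac14\}$ is not reached.

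A secondary point: your substitution $u=\Phie{\xi_1}-\Phie{\xi_1-\xi}$ with only a lower bound on the Jacobian does not by itself make the $\xi_1$-integral finite, because the resulting $\int\lr{\tau+u}^{-2B}\,du$ diverges for $2B<1$, which is the regime here. The paper's introduction explicitly notes that the bi-harmonic term blocks the change-of-variable argument of~\cite{GTV97}; what is actually used is the convexity of the convoluted phase to split the integral at its critical point (or root), bounding the contribution near it over a window of length $O(\varepsilon^{-1/2}\lr{\Gamma}^{1/4})$ --- whence the loss $\lr{\Gamma}^{1/4}$ in the exponent --- and exploiting the cubic growth of the phase away from it. This is a different, and sharper, use of convexity than a Jacobian lower bound.
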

\begin{proof}

Let $-c=b'$.
To estimate $\|\Lambda^{-1}(\Delta |E|^2)\|_{X^{l,-c}_{\pm}}$,
we take its scalar product with a generic function $n\in X^{-l,c}$,
 i.e.
\begin{equation}\label{E:bilinear-2-dual}
\begin{split}
&\int {\overline{\wh n}}(\tau,\xi)(\Phie{\xi})^{-1/2}|\xi|^2\wh{\overline{E}}\ast\widehat{E}(\tau,\xi) d\tau d\xi \\
&=\iint {\overline{\wh n}}(\tau,\xi)(\Phie{\xi})^{-1/2}|\xi|^2
{\overline{\wh E}}\big(-(\tau-\tau_1),-(\xi-\xi_1)\big) \wh{E}(\tau_1,\xi_1) d\tau_1 d\xi_1d\tau d\xi\\
&=\iint {\overline{\wh n}}(\tau,\xi)(\Phie{\xi})^{-1/2}|\xi|^2
{\overline{\wh E}}(\tau_2,\xi_2) \wh{E}(\tau_1,\xi_1) d\tau_1 d\xi_1d\tau d\xi
\end{split}
\end{equation}
where
\begin{equation}
\xi_2=\xi_1-\xi \;,\; \tau_2=\tau_1-\tau.
\end{equation}
Let $\widehat{v}=\lr{\xi}^{-l}\lr{\sigma}^c{\overline{\wh n}}(\tau,\xi)$ where
$\sigma=\tau\pm\sqrt{\Phie{\xi}}$ then $\norm{\wh{v}}_{L^2}
=\norm{n}_{X_{\pm}^{-l,c}}$.  Similarly we define $\wh{v}_1=\lr{\xi_1}^{k}\lr{\sigma_1}^{b_1}\wh{E}(\tau_1,\xi_1)$ and $\wh{v_2}=\lr{\xi_2}^k\lr{\sigma_2}^{b_1}{\overline{\wh E}}(\tau_2,\xi_2)$ where $\sigma_i=\tau_i+\Phie{\xi_i}$,
then $\|\widehat{v}_i\|_{L^2}=\normXk{E_i}$.
To proceed, we rewrite~\eqref{E:bilinear-2-dual} as
\begin{equation}\label{E:intgeral-W-pre}
\iint\frac{\hv\hv_1\hv_2|\xi|^2\lr{\xi}^l}
{\lr{\sigma}^c\lr{\sigma_1}^{b_1}\lr{\sigma_2}^{b_1}(\Phie{\xi})^{1/2}\lr{\xi_1}^k\lr{\xi_2}^k}d\tau_1 d\xi_1d\tau d\xi.
\end{equation}
We note that the estimate remains unchanged if we put an absolute value sign to the integrand. We also note that
\[
\frac{|\xi|^2}{(\Phie{\xi})^{1/2}}\leq \frac{1}{\varepsilon}.
\]
Thus to estimate~\eqref{E:intgeral-W-pre} is equal to estimating
\begin{equation}\label{E:intgeral-W}
W=\iint\frac{\hv\hv_1\hv_2\lr{\xi}^l}
{\lr{\sigma}^c\lr{\sigma_1}^{b_1}\lr{\sigma_2}^{b_1}
\lr{\xi_1}^k\lr{\xi_2}^k}d\tau_1 d\xi_1d\tau d\xi.
\end{equation}
Therefore to prove~\eqref{E:bilinear-2} is equivalent to showing that
\begin{equation}\label{E:bound-W}
|W|\leq C(\varepsilon)\norm{v}_{L^2}\norm{v_1}_{L^2}\norm{v_2}_{L^2}.
\end{equation}
In order to prove~\eqref{E:bound-W}, we should discuss two cases, i.e.
$k>-\frac{1}{4}$ and $-\frac{3}{4}< k\leq -\frac{1}{4}$.

{\bf Case 1.} $-\frac{1}{4}<k $.
There are two sub-cases; $|\xi|\leq 1$ and $|\xi|>1$.

\noindent{\bf Case 1a.} $|\xi|\leq 1$. \par
The argument here is similar to Case 1 in the previous lemma. It is clear that we only have to show
\begin{equation}\label{E:bound-W-1}
|W_1|\leq C(\varepsilon)\norm{v}_{L^2}\norm{v_1}_{L^2}\norm{v_2}_{L^2}
\end{equation}
where
\begin{equation}\label{E:intgeral-W-1}
W_1=\iint\frac{|\hv\hv_1\hv_2|\lr{\xi_1}^{1/4}\lr{\xi_2}^{1/4}}
{\lr{\sigma}^{1/4}\lr{\sigma_1}^{1/4+}\lr{\sigma_2}^{1/4+}
}d\tau_1 d\xi_1d\tau d\xi.
\end{equation}
This can be done by the same method as in the Case 1 of Lemma~\ref{L:bilinear-1}.
Here we have
\[
W_1\leq {\Big \|}{\Big (}\frac{\lr{\xi_1}^{1/4}|\hv_1|}{\lr{\sigma_1}^{1/4+}}{\Big )^{\vee}}
{\Big \|}_{L^{8/3}_tL^4_x}
 {\Big \|}{\Big (}\frac{|\hv|}{\lr{\sigma}^{1/4}}{\Big )^{\vee}}
{\Big \|}_{L^{4}_tL^2_x}
{\Big \|}{\Big (}\frac{\lr{\xi_2}^{1/4}|\hv_2|}{\lr{\sigma_2}^{1/4+}}{\Big )^{\vee}}
{\Big \|}_{L^{8/3}_tL^4_x}.
\]
The terms $\lr{\xi_i}^{1/4}$ in the first and third norms do not hurt. Since
$\lr{\xi_i}\leq C(\varepsilon)\lr{(\xi_i)_\varepsilon}$ (recall~\eqref{E:fractional-differentiation}
for notation $\lr{\xi_\varepsilon}$)  means~\eqref{E:Strichartz} or~\eqref{E:Strichartz-inter} is equal to
\begin{equation}
\Big\|\Big(\frac{|\lr{\xi_j}^{1/4}{\wh v}_j|}{\lr{\sigma_j}^{1/4+}}\Big)^{\vee}\Big\|_{L^{8/3}_tL^{4}_x}\leq
C(\varepsilon)\|v_j\|_{L^2_{\tau}L^2_{\xi}}.
\end{equation}

\noindent{\bf Case 1b.} $|\xi|>1$. \par
By applying argument as in~\eqref{E:Schwartz_inequality}, we see that
 the proof of~\eqref{E:bound-W}  is reduced to the boundedness of
\begin{equation}\label{E:C-2}
\begin{split}
&C_2(\varepsilon)=\sup\limits_{\tau,|\xi|>1}\bigg{\{}\frac{\lr{\xi}^l}{\lr{\tau\pm\sqrt{\Phie{\xi}}}^{c}}\times  \\
&\hspace{1.5cm}{\Big (}\int\frac{\lr{\xi_1}^{-2k}
\lr{\xi_1-\xi}^{-2k}d\tau_1d\xi_1}{\lr{\tau_1-\tau
+\Phie{\xi_1-\xi}}^{2b_1}\lr{\tau_1+\Phie{\xi_1}}^{2b_1}}{\Big )}^{1/2}\bigg{\}}.
\end{split}
\end{equation}
The boundedness of $C_2(\varepsilon)$ is proved in Lemma~\ref{L:C_2_bounded}..

{\bf Case 2.} $-\frac{3}{4}<k\leq -\frac{1}{4}$.

To complete the proof, we have to prove that~\eqref{E:bound-W} holds for
the missing part $\{(k,l)|2k-l>-3/2,
-\frac{3}{4}<k\leq -\frac{1}{4}\}$ (see Figure \ref{F:klm}).

For this purpose, we consider the following.
Integrating with respect to $\zeta_1,\zeta_2$ instead of $\zeta_1,\zeta$ in ~\eqref{E:intgeral-W} (Jacobian is 1) and applying a similar argument as in~\eqref{E:Schwartz_inequality} reduces the proof of~\eqref{E:bound-W} in this case
to proving the boundedness of
\[
\begin{split}
&C_3(\varepsilon)=\sup\limits_{\xi_2,\tau_2}\bigg{\{}\frac{1}{\lr{\tau_2
+\Phie{\xi_2}}^{b_1}\lr{\xi_2}^{k}}\times \\
&\hspace{1.5cm}{\Big (}\int
\frac{\lr{\xi_1-\xi_2}^{2l} d\tau_1d\xi_1}{\lr{\tau_1-\tau_2\pm\sqrt{\Phie{\xi_1-\xi_2}}}^{2c}\lr{\tau_1+\Phie{\xi_1}}^{2b_1}
\lr{\xi_{1}}^{2k}}{\Big )}^{1/2}\bigg{\}}.
\end{split}
\]

For the convenience of further discussions, we relabel the variables and write them as
\begin{equation}\label{E:C-3}
\begin{split}
&C_3(\varepsilon)=\sup\limits_{\xi,\tau}\bigg{\{}\frac{1}{\lr{\tau
+\Phie{\xi}}^{b_1}\lr{\xi}^{k}}\times \\
&\hspace{1.5cm}{\Big (}\int
\frac{\lr{\xi_1-\xi}^{2l} d\tau_1d\xi_1}{\lr{\tau_1-\tau\pm\sqrt{\Phie{\xi_1-\xi}}}^{-2b'}\lr{\tau_1+\Phie{\xi_1}}^{2b_1}
\lr{\xi_{1}}^{2k}
}{\Big )}^{1/2}\bigg{\}}.
\end{split}
\end{equation}
In Lemma~\ref{L:C_3_bounded}, we should prove that $C_3(\varepsilon)$
is bounded in the desired $(k,l)$ region.

\end{proof}

\begin{rem}
Note that condition $k\leq -1/4$ is required in order to prove $C_3$ is bounded.
(See the proof of Case 1c. of Lemma~\ref{L:key-lemma-3}.)
Thus we have to discuss two cases when $k<0$.
\end{rem}

\section{Proof of Theorem}

In order to complete the proof of Theorem~\ref{T:main-theorem}, we have to show that
$C_1,C_2$ and $C_3$ are bounded. In fact, we will prove $C_1$ is bounded in the following first lemma.

\begin{lem}\label{L:C_1_bounded}
Let $(k,l)$ be in the union of the set $\{(k,l)|k-l< \frac{3}{2},k\geq 0\}$ and
$\{(k,l)|k+l>-\frac{3}{2}, k<0\}$. There exists a suitable $\theta>0$
 close enough to $0$, such that if we let
$b=b_1=\frac{1}{2}+\frac{1}{2}\theta$ and $b_1'=-\frac{1}{2}+\theta$, then
\begin{equation}\label{E:key-estimate}
\begin{split}
&\sup\limits_{|\xi|\geq1,\tau}\bigg{\{}\frac{1}{\lr{\xi}^l\lr{\tau\pm\sqrt{\Phie{\xi}}}^{b}}\times \\
&\hspace{1cm}{\Big (}\int_{-\infty}^{\infty}\int_{-\infty}^{\infty}\frac{\lr{\xi_1}^{2k}d\tau_1d\xi_1}
{\lr{\tau_1+\Phie{\xi_1}}^{-2b_1'}
\lr{\tau-\tau_1+\Phie{\xi_1-\xi}}^{2b_1}\lr{\xi-\xi_1}^{2k}}{\Big)^{1/2}}\bigg{\}}\\
&\leq C(\varepsilon,b,b_1).
\end{split}
\end{equation}
\end{lem}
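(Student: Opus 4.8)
The plan is to collapse the $(\tau_1,\xi_1)$ double integral to a single spatial integral and then exploit the convexity of the symbol $\Phie{\cdot}=\xi^2+\varepsilon^2\xi^4$. Writing the two time exponents as $-2b_1'=1-2\theta$ and $2b_1=1+\theta$, whose sum $2-\theta$ exceeds $1$, I would first integrate in $\tau_1$ by the elementary one-dimensional estimate $\int \lr{\tau_1-a}^{-\alpha}\lr{\tau_1-b}^{-\beta}\,d\tau_1\lesssim \lr{a-b}^{-\min(\alpha,\beta,\alpha+\beta-1)}$. With the two $\tau_1$-centers at $-\Phie{\xi_1}$ and $\tau-\Phie{\xi_1-\xi}$, and $\min(1-2\theta,1+\theta,1-\theta)=1-2\theta$, this produces the factor $\lr{\tau+\Phie{\xi_1}-\Phie{\xi_1-\xi}}^{-(1-2\theta)}$. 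After this step the square of the quantity to be bounded is
\begin{equation*}
\frac{1}{\lr{\xi}^{2l}\lr{\tau\pm\sqrt{\Phie{\xi}}}^{2b}}\int \frac{\lr{\xi_1}^{2k}\,d\xi_1}{\lr{\xi_1-\xi}^{2k}\,\lr{\tau+\Phie{\xi_1}-\Phie{\xi_1-\xi}}^{1-2\theta}},
\end{equation*}
so it suffices to show the inner $\xi_1$-integral is $\lesssim C(\varepsilon)\lr{\xi}^{2l}\lr{\tau\pm\sqrt{\Phie{\xi}}}^{2b}$ uniformly in $\tau$ and $|\xi|\ge1$.

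The key observation is that the convoluted phase $g(\xi_1):=\Phie{\xi_1}-\Phie{\xi_1-\xi}$ is strictly monotone in $\xi_1$. Since $\Phie{\cdot}$ is convex with $\phi_\varepsilon''\ge2$, one has $g'(\xi_1)=\phi_\varepsilon'(\xi_1)-\phi_\varepsilon'(\xi_1-\xi)$, whence $|g'(\xi_1)|\ge 2|\xi|\ge2$ for $|\xi|\ge1$; a direct computation refines this to $|g'(\xi_1)|\gtrsim |\xi|\,(1+\varepsilon^2\xi_1^2+\varepsilon^2\xi^2)$, so that $g$ grows cubically for large $|\xi_1|$. I would therefore change variables $u=\tau+g(\xi_1)$, turning the inner integral into $\int \frac{\lr{\xi_1(u)}^{2k}}{\lr{\xi_1(u)-\xi}^{2k}}\,\frac{du}{|g'(\xi_1(u))|\,\lr{u}^{1-2\theta}}$. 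The cubic growth of $g$ makes the Jacobian $|g'|^{-1}$ decay like $|\xi_1|^{-2}$, which both guarantees convergence of the integral (the tail is controlled by the convergent $\int |u|^{-2/3}\lr{u}^{-(1-2\theta)}\,du$) and supplies the extra powers of $\lr{\xi}$ contributed by the biharmonic term.

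Next I would split the $\xi_1$-line according to the size of $|u|=|\tau+g(\xi_1)|$. On the tail the Jacobian decay above makes the integral converge, while the dominant contribution comes from the near-resonant zone around the unique point $\xi_1^{*}$ with $g(\xi_1^{*})=-\tau$, whose $\xi_1$-measure is $\lesssim |g'(\xi_1^{*})|^{-1}$. Since $|g'(\xi_1^{*})|$ grows when $|\xi_1^{*}|$ (equivalently $|\tau\pm\sqrt{\Phie{\xi}}|$) is large, the only binding case is $\xi_1^{*}$ moderate, where the factor $\lr{\tau\pm\sqrt{\Phie{\xi}}}^{2b}\ge1$ on the right is harmless. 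In each zone I would bound the algebraic weight $\lr{\xi_1}^{2k}\lr{\xi_1-\xi}^{-2k}$ by a power of $\lr{\xi}$, distinguishing $k\ge0$ (where $\lr{\xi_1}^{2k}\lesssim\lr{\xi}^{2k}$ on the relevant range) from $k<0$, and collect the net exponent of $\lr{\xi}$. Requiring this exponent to be $\le 2l$ is precisely what forces $k-l<\tfrac{3}{2}$ when $k\ge0$ and $k+l>-\tfrac{3}{2}$ when $k<0$, i.e. the two regions defining the admissible set.

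The main obstacle I anticipate is the non-integrability of $\lr{u}^{-(1-2\theta)}$ on its own: a crude bound replacing the weight ratio by a constant and integrating loses a power of $\lr{\xi}$ and only reaches $k-l\le\tfrac12$. To recover the full range $k-l<\tfrac{3}{2}$ one must retain the Jacobian $|g'|^{-1}\gtrsim(\varepsilon^2|\xi|\,\xi_1^2)^{-1}$ produced by the quantum (biharmonic) dispersion and balance it against the growth of the $u$-range. Tracking this balance carefully, together with the separate treatment of $|\xi|\lesssim\varepsilon^{-1}$ and $|\xi|\gtrsim\varepsilon^{-1}$, is where the constant $C(\varepsilon)$ of order $\varepsilon^{-1}$ arises and where essentially all of the case analysis is concentrated.
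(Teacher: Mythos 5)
Your outline reaches the statement by essentially the paper's route, with one genuine difference in the core one-dimensional estimate. The paper likewise integrates out $\tau_1$ first via the elementary two-weight bound (its Lemma~\ref{L:s-1_s-2}), obtains the reduced phase $\tau+\Phie{\xi_1}-\Phie{\xi_1-\xi}$ with exponent $2B=1-2\theta$, bounds the ratio $\lr{\xi_1}^{2k}\lr{\xi_1-\xi}^{-2k}$ by $\lr{\xi}^{2|k|}$ (its ``key observation'' \eqref{E:key_observation}), shows the remaining $\xi_1$-integral is at most $C(B,\varepsilon)\lr{\Gamma}^{-(2B-\frac{1}{4})}$ with $\Gamma=\xi^2+\varepsilon^2\xi^4$, and closes with $\lr{\xi}^{\frac{3}{2}-}\leq C(\varepsilon)\lr{\tau\pm\sqrt{\Phie{\xi}}}^{b}\lr{\Gamma}^{B-\frac{1}{8}}$ --- precisely your exponent count $|k|-l<\frac{3}{2}$, with the factor $\lr{\tau\pm\sqrt{\Phie{\xi}}}^{2b}\geq 1$ used only trivially, as you note. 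Where you diverge is in that middle step: the paper substitutes $\xi_1=\frac{1}{2}\xi+\eta$, reduces to the cubic $f_{\tau,\xi}(\eta)=\tau+(2\xi+\varepsilon^2\xi^3)\eta+4\varepsilon^2\xi\eta^3$, and runs a sign-of-$\tau$ case analysis, splitting at the root $R$ and using convexity of $f_{\tau,\xi}$ to lower-bound $|f_{\tau,\xi}|$ linearly on $[0,R]$; you instead change variables $u=\tau+g(\xi_1)$ using strict monotonicity of $g$ and track the non-constant Jacobian. These rest on the same structural fact --- $g'(\xi_1)=2\xi+4\varepsilon^2\xi\bigl(3(\xi_1-\tfrac{1}{2}\xi)^2+\tfrac{1}{4}\xi^2\bigr)$, so $|g'|\gtrsim|\xi|(1+\varepsilon^2\xi_1^2+\varepsilon^2\xi^2)$ --- and both must absorb conversions like $\xi\leq\varepsilon^{-1/2}\lr{\Gamma}^{1/4}$ into $C(\varepsilon)$. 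Your version makes the source of the gain (Jacobian decay like $|u|^{-2/3}$ at infinity from the biharmonic term) more transparent, but you still have to locate the root $\xi_1^{*}$ and split by the size of $|u|$, which is exactly the bookkeeping the paper's convexity decomposition organizes; in particular the mid-range $|\xi_1|\lesssim\varepsilon^{-1}$, where $g'$ is only of size $\Gamma/\xi$, is where the naive count gives $|k|-l\leq\frac{1}{2}$ and the $\varepsilon$-dependent constant must do the work, just as you anticipate. The plan is sound; only that deferred case analysis remains to be written out.
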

\begin{proof}
Let $B_1=-b_1'$ and $B_2=b_1$. By
Lemma~\ref{L:key-lemma-1}, we have
\[
\begin{split}
&{\Big (}\int_{-\infty}^{\infty}\int^{\infty}_{-\infty}\frac{\lr{\xi_1}^{2k}d\tau_1d\xi_1}{\lr{\tau_1+\Phie{\xi_1}}^{-2b_1'}
\lr{\tau-\tau_1+\Phie{\xi_1-\xi}}^{2b_1}\lr{\xi-\xi_1}^{2k}}{\Big )^{1/2}}\\
&\leq C(B,\varepsilon)\frac{\lr{\xi}^{|k|}}{\lr{\Gamma}^{(B-\frac{1}{8})}}
\end{split}
\]
where $\lr{\Gamma}=\lr{\xi^2+\varepsilon^2\xi^4}$ and $B=-b_1'$.
Therefore, the problem is reduced to proving
\[
\sup\limits_{\xi,\tau}{\bigg\{ }\frac{1}{\lr{\xi}^l\lr{\tau\pm\sqrt{\Phie{\xi}}}^{b}}\times \frac{\lr{\xi}^{|k|}}
{\lr{\Gamma}^{(B-\frac{1}{8})}}{\bigg\} }\leq C(b,b_1,\varepsilon).
\]
It is easy to see that for any $\tau$ we have
\begin{equation}
C(B,\varepsilon)\lr{\xi}^{\frac{3}{2}-}\leq(1+|\tau\pm\sqrt{\xi^2+\varepsilon^2\xi^4}|)^b(1+|\xi^2+\varepsilon^2\xi^4|)^{(B-\frac{1}{8})}
\end{equation}
where $\frac{3}{2}-=\frac{3}{2}-\delta$ and $\delta$ is a small positive number depending on $\theta$.
Therefore we conclude that~\eqref{E:key-estimate} holds when
\[
\frac{3}{2}-\geq |k|-l.
\]
\end{proof}

In the following lemma we will prove that $C_2(\varepsilon)$ is uniformly
bounded with respect to $\tau$ and $\xi$.

\begin{lem}\label{L:C_2_bounded}
Given $(k,l)$ belonging to the union of the set $\{(k,l)|k-l> -\frac{3}{2},k\geq 0\}$ and $\{(k,l)|2k-l>-\frac{3}{2}, -1/4< k <0\}$, there is a suitable $\theta>0$
close enough to $0$ such that if we let
$b_1=\frac{1}{2}+\frac{1}{2}\theta$ and $b'=-\frac{1}{2}+\theta$, then
\begin{equation}\label{E:key-estimate-2}
\begin{split}
&\sup\limits_{|\xi|\geq 1,\tau}\bigg{\{}\frac{\lr{\xi}^l}{\lr{\tau\pm\sqrt{\Phie{\xi}}}^{-b'}}\times \\
&\hspace{1cm}{\Big (}\int_{-\infty}^{\infty}\int_{-\infty}^{\infty}\frac{\lr{\xi_1}^{-2k}\lr{\xi_1-\xi}^{-2k}
d\tau_1d\xi_1}{\lr{\tau_1+\Phie{\xi_1}}^{2b_1}\lr{\tau-\tau_1
+\Phie{\xi_1-\xi}}^{2b_1}}{\Big )}^{1/2}\bigg{\}}\\
&\leq C(\varepsilon,b_1,b').
\end{split}
\end{equation}
\end{lem}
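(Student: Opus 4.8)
The plan is to follow the same two–stage scheme already used for $C_1(\varepsilon)$ in Lemma~\ref{L:C_1_bounded}: first integrate out $\tau_1$ to fuse the two Schr\"odinger brackets into a single resonance factor, then dispatch the remaining $\xi_1$–integral by exploiting the convexity of the convoluted phase (this is the ``interaction of two fourth order waves'' of Section~5), and finally collapse everything to an algebraic inequality in $(k,l)$ that must be checked on the two regions in the hypothesis. Throughout I would work with the concrete choice $b_1=\tfrac12+\tfrac12\theta$, $b'=-\tfrac12+\theta$, so that $2b_1-1=\theta$ and $-b'=\tfrac12-\theta$.

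\emph{Step 1 ($\tau_1$–integration).} Since $2b_1=1+\theta>1$, I would apply the elementary convolution bound
\[
\int_{-\infty}^{\infty}\frac{d\tau_1}{\lr{\tau_1+a}^{2b_1}\lr{\tau_1-c}^{2b_1}}\leq \frac{C(b_1)}{\lr{a+c}^{2b_1-1}}
\]
with $a=\Phie{\xi_1}$ and $c=\tau+\Phie{\xi_1-\xi}$, noting $a+c=\tau+\Phie{\xi_1}+\Phie{\xi_1-\xi}$. This collapses the inner double integral in~\eqref{E:key-estimate-2} to the single integral
\[
\int_{-\infty}^{\infty}\frac{\lr{\xi_1}^{-2k}\lr{\xi_1-\xi}^{-2k}}{\lr{\tau+\Phie{\xi_1}+\Phie{\xi_1-\xi}}^{\,\theta}}\,d\xi_1 .
\]

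\emph{Steps 2--3 (convexity and algebraic reduction).} The decisive point is that the convoluted phase $g(\xi_1):=\Phie{\xi_1}+\Phie{\xi_1-\xi}$ is strictly convex, since $g''(\xi_1)=4+12\varepsilon^2\big(\xi_1^2+(\xi_1-\xi)^2\big)\geq 4>0$, with minimum at $\xi_1=\xi/2$. This convexity is exactly what Lemma~\ref{L:key-lemma-2} is built to use: changing variables along the graph of $g$, one controls the measure of the near–resonant set $\{\xi_1:|\tau+g(\xi_1)|\lesssim\lambda\}$ and thereby converts the very weakly decaying factor $\lr{\tau+g(\xi_1)}^{-\theta}$ into quantitative decay in $\lr{\xi}$ and in $\lr{\Phie{\xi}}$, after a decomposition according to the relative sizes of $|\xi_1|,|\xi_1-\xi|,|\xi|$ to absorb the growing weights (here $-2k>0$, which forces the use of $k>-\tfrac14$). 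I would then invoke Lemma~\ref{L:key-lemma-2} to bound the $\xi_1$–integral by $C(\varepsilon,b_1,b')\,\lr{\xi}^{\rho}\lr{\Phie{\xi}}^{-\mu}$, independent of $\tau$, reducing $C_2(\varepsilon)$ to
\[
\sup_{|\xi|\geq 1,\tau}\frac{\lr{\xi}^{\,l+\rho}}{\lr{\tau\pm\sqrt{\Phie{\xi}}}^{\,\tfrac12-\theta}\,\lr{\Phie{\xi}}^{\mu}} .
\]
The supremum in $\tau$ is attained where $\lr{\tau\pm\sqrt{\Phie{\xi}}}=1$, so the question becomes whether $\lr{\xi}^{\,l+\rho}\lesssim\lr{\Phie{\xi}}^{\mu}$ as $|\xi|\to\infty$. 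Since $\Phie{\xi}\approx\varepsilon^2\lr{\xi}^4$ in that regime, the critical exponent is governed by the fourth–order term (this is the ``$4$'' in the power $\tfrac32=\tfrac{4-1}{2}$), and I would check that the resulting linear constraint is met, for $\theta$ small, on the whole union $\{k-l>-\tfrac32,\,k\geq 0\}\cup\{2k-l>-\tfrac32,\,-\tfrac14<k<0\}$. The passage from $k-l$ in the first region to $2k-l$ in the second reflects how, for $k<0$, the worst sub-region is a high--low interaction in which only one input frequency is comparable to $\xi$, so the two weights contribute a single surviving factor.

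\textbf{Main obstacle.} The crux is Step~2. The resonance exponent $2b_1-1=\theta$ provides almost no decay, while for $k<0$ the weights $\lr{\xi_1}^{-2k},\lr{\xi_1-\xi}^{-2k}$ grow; reconciling the two forces one to use the convexity of $g$ \emph{quantitatively} rather than merely to locate its single stationary point. One must also track how the bound $g''\geq 4$ competes with the $12\varepsilon^2(\cdots)$ term, which is the source of the $\varepsilon$–dependence recorded in the remark after Theorem~\ref{T:main-theorem}. Extracting the power $\lr{\xi}^{\rho}$ sharply enough to reach the boundary lines $2k-l=-\tfrac32$ and $k-l=-\tfrac32$ is the delicate heart of the matter, and is precisely the content deferred to Lemma~\ref{L:key-lemma-2}; the complementary range $-\tfrac34<k\leq-\tfrac14$ is handled separately through $C_3(\varepsilon)$ in Lemma~\ref{L:C_3_bounded}.
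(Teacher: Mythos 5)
Your overall architecture --- integrate out $\tau_1$, control the resulting one--dimensional $\xi_1$--integral by the convexity of the convoluted phase via Lemma~\ref{L:key-lemma-2}, then check an algebraic condition on $(k,l)$ in two regimes --- is indeed the paper's, but two of your concrete steps would not survive. First, your $\tau_1$--integration discards almost all of the available decay: Lemma~\ref{L:s-1_s-2} with $a_-=a_+=b_1>\tfrac12$ fuses the two brackets into $\lr{s_1-s_2}^{-2B}$ with $2B=2b_1>1$ (the paper actually first replaces one exponent $2b_1$ by $-2b'$ so that $B=-b'=\tfrac12-\theta$ lands in the window $\tfrac13<B<\tfrac12$ demanded by Lemma~\ref{L:key-lemma-2}), whereas your ``elementary convolution bound'' keeps only the exponent $2b_1-1=\theta$. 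With a resonance factor decaying like $\lr{\cdot}^{-\theta}$ the remaining $\xi_1$--integral diverges at infinity for $\theta$ small, and Lemma~\ref{L:key-lemma-2} is inapplicable since its hypothesis $B>\tfrac13$ fails; the exponent $B-\tfrac18\approx\tfrac38$ in its conclusion, i.e.\ $\lr{\Gamma}^{-(B-\frac18)}\approx\lr{\xi}^{-\frac32+}$, is exactly where the threshold $\tfrac32$ comes from, so losing $B$ loses the theorem. Second, you identify the resonance function as the sum $\tau+\Phie{\xi_1}+\Phie{\xi_1-\xi}$ and rest the convexity discussion on $g''=4+12\varepsilon^2(\cdots)$. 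The quantity produced by the $E\overline{E}$ structure (see~\eqref{E:C-2}, where the second bracket is $\lr{\tau_1-\tau+\Phie{\xi_1-\xi}}$; the sign in the restatement~\eqref{E:key-estimate-2} is a slip) and treated by Lemma~\ref{L:key-lemma-2} is the \emph{difference} $\tau+\Phie{\xi_1}-\Phie{\xi_1-\xi}$, which after centering $\xi_1=\tfrac12\xi+\eta$ becomes the odd cubic $\tau+(2\xi+\varepsilon^2\xi^3)\eta+4\varepsilon^2\xi\eta^3$; the convexity the paper exploits is that of this cubic on a half--line together with the location of its root, not global convexity of a quartic sum. So after your Step~1 you cannot simply invoke Lemma~\ref{L:key-lemma-2}: the integrand you have produced is not the one that lemma bounds.

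A smaller but still genuine gap is the endgame for $-\tfrac14<k<0$: the bound supplied by Lemma~\ref{L:key-lemma-2} is $\tau$--dependent through $A=\varepsilon^{-2/3}(|\tau/\xi|)^{1/3}$, so the supremum over $\tau$ is not attained where $\lr{\tau\pm\sqrt{\Phie{\xi}}}=1$. The paper must split into $\lr{\xi}>\lr{A}$, where the constraint $l-2k\leq\tfrac32-$ appears, and $\lr{\xi}\leq\lr{A}$, i.e.\ $|\tau|>\xi^4$, where the decay $\lr{\tau}^{b'}$ must absorb the growth $\lr{A}^{-2k}\sim\lr{\tau}^{-2k/3}$. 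This second regime is invisible in your reduction, and it is part of why the hypothesis $k>-\tfrac14$ (and the handoff of $-\tfrac34<k\leq-\tfrac14$ to $C_3$) enters.
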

\begin{proof} In order to apply Lemma~\ref{L:key-lemma-2} to the estimate of~\eqref{E:key-estimate-2}, we
replace $\lr{\tau_1+\Phie{\xi_1}}^{2b_1}$ in the left hand
side of~\eqref{E:key-estimate-2} by $\lr{\tau_1+\Phie{\xi_1}}^{-2b'}$.
It is clear that this does not affect the result.
Let $B_1=-b'$ and $B_2=b_1$, then we have $B=-b'$. 
Now we discuss two cases as the following.

{\bf Case 1.} $k\geq 0$. \par
We are reduced to proving that
\[
\sup\limits_{\xi,\tau}{\bigg\{ }\frac{\lr{\xi}^l}{\lr{\tau\pm\sqrt{\Phie{\xi}}}^{-b'}}\times
\frac{\lr{\xi}^{-k}}{\lr{\Gamma}^{(B-\frac{1}{8})}}{\bigg\} }\leq C(b',b_1,\varepsilon)
\]
where $\lr{\Gamma}=\lr{\xi^2+\varepsilon^2\xi^4}$. From
\[
C(B,\varepsilon)\lr{\xi}^{\frac{3}{2}-}\leq(1+|\tau\pm\sqrt{\xi^2+\varepsilon^2\xi^4}|)^{-b'}
(1+|\xi^2+\varepsilon^2\xi^4|)^{(B-\frac{1}{8})}
\]
we see that it holds when $l-k\leq\frac{3}{2}-$.

{\bf Case 2.} $-1/4< k<0$.  \par
\[
\sup\limits_{\xi,\tau}{\bigg\{ }\frac{\lr{\xi}^l}{\lr{\tau\pm\sqrt{\Phie{\xi}}}^{-b'}}\times
\frac{\max\{\lr{A}^{-2k},\lr{\xi}^{-2k}\}}{\lr{\Gamma}^{(B-\frac{1}{8})}}{\bigg\} }
\leq C(b',b_1,\varepsilon)
\]
where $\Gamma=\xi^2+\varepsilon^2\xi^4$ and $A=\varepsilon^{-2/3}(|\frac{\tau}{\xi}|)^{1/3}$.
When $\lr{\xi}>\lr{A}$, the bound holds for
$l-2k\leq\frac{3}{2}-$. On the other hand, when $\lr{\xi}\leq\lr{A}$ we have $|\tau|>\xi^4$. That is
$\lr{\tau\pm\sqrt{\Phie{\xi}}}^{-b'}\approx\lr{\tau}^{-b'}$ which is larger than
$\lr{\tau}^{-\frac{2k}{3}}$ when $-1/4<k<0$.

\end{proof}

Finally, we prove that $C_3$ is bounded.
\begin{lem}\label{L:C_3_bounded}
If $(k,l)$ is in  the set  $\{(k,l)|2k-l>-3/2,
-\frac{3}{4}<k\leq -\frac{1}{4}\}$, we can find a suitable $\theta>0$
closing enough to $0$, such that if we let
$b_1=\frac{1}{2}+\frac{1}{2}\theta$ and $b'=-\frac{1}{2}+\theta$, then

\begin{equation}\label{E:key-estimate-3}
\begin{split}
&\sup\limits_{\xi,\tau}\bigg{\{}\frac{1}{\lr{\xi}^k\lr{\tau+\Phie{\xi}}^{b_1}}\times \\
&\hspace{1cm}{\Big (}\int_{-\infty}^{\infty}\int_{-\infty}^{\infty}\frac{\lr{\xi_1-\xi}^{2l}d\tau_1d\xi_1}
{\lr{\tau_1+\Phie{\xi_1}}^{2b_1}\lr{\tau_1-\tau\pm\sqrt{\Phie{\xi_1-\xi}}}^{-2b'}\lr{\xi_1}^{2k}}{\Big )}^{1/2}\bigg{\}}\\
&\leq C(\varepsilon ,b_1,b').
\end{split}
\end{equation}
\end{lem}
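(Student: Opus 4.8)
The plan is to reduce \eqref{E:key-estimate-3} to a one–dimensional integral in $\xi_1$ controlled by a single \emph{convoluted phase}, and then to exploit the convexity of that phase, in exact analogy with the treatment of $C_1$ and $C_2$ but with the Schr\"odinger weight now playing the role of the outer factor. First I would carry out the $\tau_1$–integration. The two $\tau_1$–dependent factors $\lr{\tau_1+\Phie{\xi_1}}^{2b_1}$ and $\lr{\tau_1-\tau\pm\sqrt{\Phie{\xi_1-\xi}}}^{-2b'}$ are brackets centered at $\tau_1=-\Phie{\xi_1}$ and $\tau_1=\tau\mp\sqrt{\Phie{\xi_1-\xi}}$. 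Since $2b_1=1+\theta>1$, $-2b'=1-2\theta>0$ and $2b_1-2b'-1=1-\theta>0$, the elementary convolution estimate for two brackets produces a factor $\lr{g(\xi_1)-\sigma}^{-(1-2\theta)}$, where $\sigma:=\tau+\Phie{\xi}$ is the outer Schr\"odinger modulation and the difference of the two centers is
\[
g(\xi_1):=\Phie{\xi}-\Phie{\xi_1}\pm\sqrt{\Phie{\xi_1-\xi}} .
\]
This reduces \eqref{E:key-estimate-3} to showing, uniformly in $\xi$ and $\sigma$, that
\[
\frac{1}{\lr{\xi}^{2k}\lr{\sigma}^{2b_1}}\int\frac{\lr{\xi_1-\xi}^{2l}}{\lr{\xi_1}^{2k}\,\lr{g(\xi_1)-\sigma}^{1-2\theta}}\,d\xi_1\lesssim1 .
\]

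The heart of the matter is the convexity of $g$. Writing $\psi(\eta):=\sqrt{\Phie{\eta}}$ one has $g''(\xi_1)=-\Phie{\xi_1}''\pm\psi''(\xi_1-\xi)$, with $\Phie{\xi_1}''=2+12\varepsilon^2\xi_1^2\ge2$. A direct computation gives $\psi''(\eta)=\varepsilon^2|\eta|(1+\varepsilon^2\eta^2)^{-3/2}(3+2\varepsilon^2\eta^2)$, and the algebraic inequality $s(3+2s)^2<4(1+s)^3$ with $s=\varepsilon^2\eta^2$ shows $\psi''(\eta)<2\varepsilon\le2$ whenever $0<\varepsilon\le1$. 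Hence $g''(\xi_1)<0$: the convoluted phase is strictly concave, $g'$ is strictly decreasing, $g$ has a single critical point $\xi_1^{*}$, and $g$ is monotone on each of the two branches on either side of $\xi_1^{*}$.

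On each monotone branch I would change variables $u=g(\xi_1)$, $du=g'(\xi_1)\,d\xi_1$, turning the $\xi_1$–integral into
\[
\int \lr{\xi_1(u)-\xi}^{2l}\,\lr{\xi_1(u)}^{-2k}\,\frac{\lr{u-\sigma}^{-(1-2\theta)}}{|g'(\xi_1(u))|}\,du .
\]
Away from $\xi_1^{*}$ and away from the diagonal $\xi_1=\xi$, the lower bound on $|g'|$ furnished by concavity (growing like $\varepsilon^2|\xi_1|^3$ at infinity) supplies the decay, and the output is governed by a power $\lr{\Gamma}^{-(B-1/8)}$ with $\Gamma=\xi^2+\varepsilon^2\xi^4$ and $B=-b'$, exactly as in Lemmas~\ref{L:C_1_bounded} and~\ref{L:C_2_bounded}. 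Two regions are treated separately: the non-smoothness of $\psi$ at the diagonal $\xi_1=\xi$ (where $\lr{\xi_1-\xi}^{2l}$ is harmless) is absorbed by Lemma~\ref{L:around_xi}, while the critical point, where $g(\xi_1)-g(\xi_1^{*})\approx\tfrac12 g''(\xi_1^{*})(\xi_1-\xi_1^{*})^2$, gives by a stationary–phase estimate the gain $|g''(\xi_1^{*})|^{-1/2}\lr{g(\xi_1^{*})-\sigma}^{-1/2+2\theta}$, finite because $1-2\theta>1/2$. The case $|\xi|\le1$ is milder and is disposed of by the Strichartz argument used in Case 1a of Lemma~\ref{L:bilinear-2}.

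Finally, inserting the bound of the previous step into the outer factor $\lr{\xi}^{-2k}\lr{\sigma}^{-2b_1}$ reduces \eqref{E:key-estimate-3} to an inequality of the type $\lr{\sigma}^{b_1}\lr{\Gamma}^{B-1/8}\gtrsim\lr{\xi}^{\,l-2k}$; since $4(B-1/8)=\tfrac32-4\theta$ and $\lr{\Gamma}\approx\varepsilon^2\xi^4$ for large $\xi$, this holds precisely on the stated region $2k-l>-\tfrac32$ once $\theta$ is small. I expect the genuine obstacle to be the critical–point contribution: there the growing weight $\lr{\xi_1}^{-2k}=\lr{\xi_1}^{2|k|}$ must be dominated by the concavity gain $|g''|^{-1/2}$, and it is exactly this competition that forces the restriction $k\le-\tfrac14$ (Case 1c of Lemma~\ref{L:key-lemma-3}); the complementary range $-\tfrac14<k<0$ is not accessible by this integration order and is instead covered by Lemma~\ref{L:C_2_bounded}.
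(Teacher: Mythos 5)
Your overall strategy --- integrate out $\tau_1$ with Lemma~\ref{L:s-1_s-2}, observe that the resulting convoluted phase is convex (your concave $g$ is just $-f_{\tau,\xi}$ in the paper's notation, with the same second-derivative computation), and analyze the one-dimensional $\xi_1$-integral through its unique critical point, with Lemma~\ref{L:around_xi} absorbing the diagonal $\xi_1=\xi$ --- is the same as the paper's, which packages this analysis as Lemma~\ref{L:key-lemma-3}. Your identification of the critical-point region as the source of the restriction $k\le-\frac14$ is also correct.

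There is, however, a genuine gap where you claim that ``the output is governed by a power $\lr{\Gamma}^{-(B-\frac18)}$ \ldots exactly as in Lemmas~\ref{L:C_1_bounded} and~\ref{L:C_2_bounded}'' and then reduce everything to $\lr{\sigma}^{b_1}\lr{\Gamma}^{B-\frac18}\gtrsim\lr{\xi}^{l-2k}$. That bound on the $\xi_1$-integral is false in the resonant regime. Take $\tau\approx-m$, where $\tau+m$ is the minimum of $f_{\tau,\xi}(\xi_1)=\tau+\sqrt{\Phie{\xi_1-\xi}}+\Phie{\xi_1}$, attained at $A_m\approx(\xi/\varepsilon)^{1/3}$ with $\frac34\sqrt{\Phie{\xi}}\le m\le\sqrt{\Phie{\xi}}$. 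Near $A_m$ the phase is $\approx\frac12 f''(A_m)(\xi_1-A_m)^2$ with $f''(A_m)\approx\varepsilon^{4/3}\xi^{2/3}$, and your own stationary-phase computation then yields a contribution of order $\varepsilon^{-2/3}\xi^{-1/3}$ (times the weight), whereas $\lr{\Gamma}^{-(2B-\frac14)}\approx\varepsilon^{-3/2}\xi^{-3+}$ is smaller by a factor of roughly $\xi^{-8/3}$. So there is no uniform $\Gamma$-gain here: the true decay parameter is $\Lambda_\pm=\tau+c\sqrt{\Phie{\xi}}$, the distance of $\tau$ to the \emph{wave} characteristic, which can be $O(1)$. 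The lemma survives only because of a complementarity your argument never invokes: when $\Lambda_\pm$ is small, $\tau\approx\mp\sqrt{\Phie{\xi}}$ forces the outer Schr\"odinger modulation to be large, and in general $\lr{\tau+\Phie{\xi}}\lr{\Lambda_\pm}\gtrsim C(\varepsilon)\lr{\xi}^4$. This interplay (Case 1a of the paper's proof of Lemma~\ref{L:C_3_bounded}), together with the separate resonant window in which one takes $\lr{\Lambda_\pm}=\lr{1}$ and simply pays the full length $A_m$ of the critical region --- which is exactly where $k\le-\frac14$ is consumed (Case 1c of Lemma~\ref{L:key-lemma-3}) --- is the actual mechanism; your final inequality, which discards $\lr{\sigma}^{b_1}$ via $\lr{\sigma}\ge1$ and leans on $\lr{\Gamma}^{B-\frac18}$ alone, cannot be repaired without restoring this case analysis. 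A smaller point: the supremum in \eqref{E:key-estimate-3} runs over the Schr\"odinger frequency $\xi$ with no restriction $|\xi|>1$, so small $\xi$ here is not the $|\xi|\le1$ wave-frequency case of Lemma~\ref{L:bilinear-2} and is not handled by Strichartz; it is the elementary regime $|\xi|\le32\varepsilon^{-2}$ of Lemma~\ref{L:key-lemma-3}.
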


\begin{proof}
Let $B_1=-b'=\frac{1}{2}-\theta,
\;B_2=b_1=\frac{1}{2}+\frac{1}{2}\theta$ and apply Lemma~\ref{L:key-lemma-3},
 then we have the following two cases for discussing with $B=-b'$.

{\bf Case 1.} $|\xi|>32\varepsilon^{-2}$. \par
\noindent{\bf Case 1a.} $k\leq-1/4\;,\;0\leq l$ and $8B-2(l-k)>1$.\par
We have
\begin{equation}\label{E:C-3-case-1}
\begin{split}
&\eqref{E:key-estimate-3}\leq C(B,k,l,\varepsilon)\sup\limits_{\xi,\tau}\bigg{\{}
{\Big (}\frac{1}{\lr{\xi}^k\lr{\tau+\Phie{\xi}}^{b_1}}\Big{)}\\
&{\hskip 3cm}\times
\Big{(}F(\xi,\Lambda_{+})+F(\xi,\Lambda_{-}){\Big )}\bigg{\}}
\end{split}
\end{equation}
where
\[
F(\xi,\Lambda_+)=\frac{
\max\{\lr{\xi},\lr{\Lambda_+}^{1/4}\}^{l-k}}{\lr{\Lambda_+}^{(B-\frac{1}{8})-}}
\;,\quad
F(\xi,\Lambda_-)=\frac{
\max\{\lr{\xi},\lr{\Lambda_-}^{1/4}\}^{l-k}}{\lr{\Lambda_-}^{(B-\frac{1}{8})-}}.
\]
Note that $\lr{\Lambda_+}=\lr{1}$ if $-\sqrt{\Phie{\xi}}\leq\tau\leq -\frac{1}{2}\sqrt{\Phie{\xi}}$ and $\lr{\Lambda_-}=\lr{1}$ if
$\sqrt{\Phie{\xi}}\leq\tau\leq \frac{3}{2}\sqrt{\Phie{\xi}}$,
otherwise $\Lambda_+=\tau+c\sqrt{\Phie{\xi}}$ for some positive $c$ and
$\Lambda_-=\tau+c\sqrt{\Phie{\xi}}$ for some negative $c$ where $3/4\leq |c|\leq 5/4$.
We note that if $|\tau|$ is larger than $2\Phie{\xi}$, the~\eqref{E:C-3-case-1}
is bounded provided $2k-l\geq -7/2$ by comparing the exponent of $\tau$ and noting 
that $\lr{\xi}\leq C(\varepsilon)|\tau|^{1/4}$. 
If $|\tau|$ is smaller than $2\Phie{\xi}$, the sizes of $\Lambda+,\Lambda-$ are 
not larger than $\Phie{\xi}$. On the other hand, the products 
$\lr{\tau+\Phie{\xi}}\lr{\Lambda_+}$ and $\lr{\tau+\Phie{\xi}}\lr{\Lambda_-}$ 
is large than $C(\varepsilon)\lr{\xi}^4$. Thus we conclude that~\eqref{E:C-3-case-1}
is bounded provided $2k-l\geq-(\frac{3}{2}-)$ in this case.  
 Finally we note that $2k-l\geq-(\frac{3}{2}-)$ is 
more strict than $2k-l\geq -7/2$. 

\noindent{\bf Case 1b.} $k\leq-1/4\;,\; l<0$ and $8B-2(l-k)>1$.\par
We have to consider the additional term
\[
\frac{1}{\lr{\xi}^k\lr{\tau+\Phie{\xi}}^{b_1}}\times \frac{\lr{\xi}^{-k}}{\lr{\xi}^{3B-}}
\]
which is bounded if $-\frac{3}{4}<k$. (When $B$ is close to $1/2$.)

{\bf Case 2.} $|\xi|\leq 32\varepsilon^{-2}$.\par

When $|\xi|\leq 32\varepsilon^{-2}$ and $8B-2(l-k)>1$, we have the bound
\begin{equation}
\eqref{E:key-estimate-3}\leq C(B,k,l,\varepsilon)\sup\limits_{\xi,\tau}\bigg{\{}
\frac{1}{\lr{\xi}^k\lr{\tau+\Phie{\xi}}^{b_1}}\bigg{\}}\leq C(B,k,l,\varepsilon).\\
\end{equation}
Note that the condition $8B-2(l-k)>1$ is equal to $k-l>-3/2$ when $B$ is close to $1/2$.

\end{proof}

\section{4th order wave vs 4th order wave}


In this section, we shall discuss two lemmas which describe the non-linear interactions
between two fourth order waves, i.e. $\lr{\tau_1+\Phie{\xi_1}}$ and
$\lr{\tau-\tau_1+\Phie{\xi_1-\xi}}$, when they are away from each other ($|\xi|>1$).
For these two models, we are able to characterize the interactions of two waves 
quantitatively. Lemma~\ref{L:key-lemma-1} characterizes one type of the interaction
of two fourth order waves which is needed in the nonlinear estimate of ``{\it Schr\"odinger part}". On the other hand,
Lemma~\ref{L:key-lemma-2} describes another type of  interaction between
two fourth order waves which is needed in the nonlinear estimate of ``{\it Wave part}". In both lemmas
we can see that the  majority of interaction is again a fourth order wave
represented by $\Gamma=\xi^2+\varepsilon^2\xi^4$.

We first introduce an elementary estimate.
\begin{lem}\label{L:s-1_s-2}
Let $0\leq a_{-}\leq a_{+}$ and $a_{+}+a_{-}>1/2$. The following estimate holds
for  $s_1,s_2\in\mathbb{R}$
\[
\int^{\infty}_{-\infty}\frac{dx}{(1+|x-s_1|)^{2a_{-}}(1+|x-s_2|)^{2a_{+}}}\leq C
\frac{1}{(1+|s_1-s_2|)^{\alpha}}
\]
where $\alpha=2a_{-}-[1-2a_{+}]_{+}$.
\end{lem}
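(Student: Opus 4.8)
The plan is to reduce the two-parameter integral to a single scaling variable and then split the real line according to which of the two weights is dominant. By the translation $x\mapsto x+s_1$ and, if necessary, the reflection $x\mapsto -x$, I may assume $s_1=0$ and set $s:=|s_1-s_2|\geq 0$, so that it suffices to bound
\[
I(s)=\int_{-\infty}^{\infty}\frac{dx}{(1+|x|)^{2a_-}(1+|x-s|)^{2a_+}}.
\]
For each fixed $s$ the integrand decays like $|x|^{-2(a_-+a_+)}$ as $|x|\to\infty$, so the hypothesis $a_-+a_+>1/2$ guarantees convergence. When $0\leq s\leq 1$ we have $1+s\approx 1$, so the target is simply $I(s)\leq C$, which follows from continuity of $I$ together with the uniform integrable decay at infinity. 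The substantive case is $s>1$, where $1+|s_1-s_2|=1+s\approx s$ and I must show $I(s)\lesssim s^{-\alpha}$.

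For $s>1$ I would decompose $\mathbb{R}=A\cup B\cup C$ with $A=\{|x|\leq s/2\}$, $B=\{|x-s|\leq s/2\}$, and the remainder $C=\{x>3s/2\}\cup\{x<-s/2\}$. On $A$ one has $1+|x-s|\gtrsim s$, so that region contributes $\lesssim s^{-2a_+}\int_{|x|\leq s/2}(1+|x|)^{-2a_-}\,dx$; on $B$ one has $1+|x|\gtrsim s$, giving $\lesssim s^{-2a_-}\int_{|y|\leq s/2}(1+|y|)^{-2a_+}\,dy$; and on $C$ both factors are $\gtrsim 1+|x|$, so the integrand is $\lesssim (1+|x|)^{-2(a_-+a_+)}$ and the tail integrates to $\lesssim s^{1-2(a_-+a_+)}$. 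The one-dimensional building blocks are elementary: $\int_{|x|\leq s/2}(1+|x|)^{-2a}\,dx\lesssim 1$ when $2a>1$ and $\lesssim s^{1-2a}$ when $2a<1$.

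Inserting these, region $A$ yields $s^{-2a_+}$ (if $2a_->1$) or $s^{-(2a_-+2a_+-1)}$ (if $2a_-<1$), both dominated by $s^{-2a_-}$ and hence by $s^{-\alpha}$ using $a_-\leq a_+$ and $s>1$; region $C$ gives $s^{-(2a_-+2a_+-1)}$, which meets the target in both exponent ranges; and region $B$ produces $s^{-2a_-}$ when $2a_+>1$ (matching $\alpha=2a_-$) and $s^{-(2a_-+2a_+-1)}$ when $2a_+<1$ (matching $\alpha=2a_-+2a_+-1$). Summing the three contributions then gives $I(s)\lesssim s^{-\alpha}$, which is the claim for $s>1$.

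The crux of the argument, and the step deserving the most care, is the exact bookkeeping that produces the clipped exponent $\alpha=2a_--[1-2a_+]_+$. The dichotomy is precisely whether the integral concentrated at the $a_+$-peak (region $B$) converges, contributing no power of $s$, or instead contributes the extra factor $s^{1-2a_+}$; this is the sole mechanism generating $[1-2a_+]_+$. The ordering hypothesis $a_-\leq a_+$ is exactly what ensures the $a_-$-peak (region $A$) is never the binding constraint, so that the $a_+$-side alone dictates $\alpha$. The only remaining delicacy is the borderline cases $2a_\pm=1$, where the corresponding one-dimensional integral carries a logarithmic factor; these are absorbed by decreasing $\alpha$ by an arbitrarily small amount, consistent with the $B-$ convention, and in every application $\theta>0$ keeps the exponents strictly off the boundary.
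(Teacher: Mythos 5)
Your argument is correct, but it is not the route the paper takes: the paper offers no self-contained proof of Lemma~\ref{L:s-1_s-2} at all, instead symmetrizing via the change of variable $y=x-\frac{s_1+s_2}{2}$, $s=\frac{s_1-s_2}{2}$, and quoting Lemma~4.2 of~\cite{GTV97}. Your three-region decomposition $A=\{|x|\le s/2\}$, $B=\{|x-s|\le s/2\}$, $C=\mathbb{R}\setminus(A\cup B)$ gives an elementary, self-contained derivation, and you correctly identify that the clipping $[1-2a_+]_+$ is generated solely by whether the integral over the $a_+$-peak converges, with the hypothesis $a_-\le a_+$ guaranteeing the $a_-$-peak never worsens the exponent; the logarithmic endpoint cases $2a_\pm=1$ are legitimately absorbed into the convention $[0]_+=\epsilon>0$. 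One intermediate assertion is off: for region $A$ in the sub-case $2a_-<1$ you claim the contribution $s^{-(2a_-+2a_+-1)}$ is ``dominated by $s^{-2a_-}$,'' which requires $2a_+\ge 1$ and fails when $a_+<1/2$; however, in that regime $\alpha=2a_-+2a_+-1$ exactly, so the contribution still equals $s^{-\alpha}$ and the conclusion stands. The trade-off between the two approaches is the usual one: the paper's reduction is two lines but leans on an external result, while your version costs a page of bookkeeping and in exchange makes the mechanism behind the exponent $\alpha=2a_--[1-2a_+]_+$ completely transparent.
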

The notation  $[\lambda]_{+}=\lambda$ if $\lambda>0\;,\;=\epsilon>0$ if $\lambda=0$ and $=0$ if $\lambda<0$.
This lemma is a variant of Lemma 4.2 in~\cite{GTV97}. We can simply
consider the change of variable $y=x-\frac{s_1+s_2}{2}$, take $s=\frac{s_1-s_2}{2}$ in
Lemma 4.2 of~\cite{GTV97} and get above lemma.

\begin{lem}[Two 4th order waves]\label{L:key-lemma-1}
Assume $1/4< B_1\leq B_2$ and $1/6<B<1/2$ where $2B=2B_1-[1-2B_2]_{+}$.
When $|\xi|> 1$ there exists a constant $C(B,\varepsilon)$ which is independent of
$\tau$ and $\xi$ such that
\begin{equation}\label{E:key-estimate-xi-1}
\begin{split}
&{\bigg (}\int_{-\infty}^{\infty}\int^{\infty}_{-\infty}\frac{\lr{\xi_1}^{2k}d\tau_1d\xi_1}{\lr{\tau_1+\Phie{\xi_1}}^{2B_1}
\lr{\tau_1-\tau+\Phie{\xi_1-\xi}}^{2B_2}\lr{\xi_1-\xi}^{2k}}{\bigg )^{1/2}}\\
&\leq C(B,\varepsilon)\frac{\lr{\xi}^{|k|}}{\lr{\Gamma}^{(B-\frac{1}{8})}}
\end{split}
\end{equation}
where $\Gamma=\xi^2+\varepsilon^2\xi^4$.
\end{lem}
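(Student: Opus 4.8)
The plan is to reduce the two-dimensional integral to a one-dimensional $\xi_1$-integral by first performing the $\tau_1$-integration, and then to exploit the convexity of the convoluted phase function to control the resulting quantity by a power of $\lr{\Gamma}$. First I would carry out the integration in $\tau_1$. The two factors depending on $\tau_1$ are $\lr{\tau_1+\Phie{\xi_1}}^{2B_1}$ and $\lr{\tau_1-\tau+\Phie{\xi_1-\xi}}^{2B_2}$, which are exactly of the form treated in Lemma~\ref{L:s-1_s-2} with $s_1=-\Phie{\xi_1}$ and $s_2=\tau-\Phie{\xi_1-\xi}$. Applying that lemma with the given hypotheses $1/4<B_1\leq B_2$ and $2B=2B_1-[1-2B_2]_+$, the $\tau_1$-integral is bounded by
\[
\frac{C}{\lr{s_1-s_2}^{2B}}=\frac{C}{\lr{\tau+\Phie{\xi_1-\xi}-\Phie{\xi_1}}^{2B}}.
\]
Thus the square of the left-hand side is controlled by
\[
\int_{-\infty}^{\infty}\frac{\lr{\xi_1}^{2k}}{\lr{\xi_1-\xi}^{2k}}\,
\frac{d\xi_1}{\lr{\tau+\Phie{\xi_1-\xi}-\Phie{\xi_1}}^{2B}}.
\]

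Next I would analyze the phase $\psi(\xi_1):=\Phie{\xi_1-\xi}-\Phie{\xi_1}=\phi_\varepsilon(\xi_1-\xi)-\phi_\varepsilon(\xi_1)$, which is the key object the introduction advertises as convex. Since $\phi_\varepsilon(\eta)=\eta^2+\varepsilon^2\eta^4$, a direct computation gives $\psi'(\xi_1)=-2\xi-4\varepsilon^2\big(3\xi_1^2\xi-3\xi_1\xi^2+\xi^3\big)$ up to sign conventions, and $\psi''(\xi_1)=-24\varepsilon^2\xi(\xi_1-\tfrac{\xi}{2})$, so $\psi$ is strictly monotone in $\xi_1$ away from $\xi_1=\xi/2$ and its derivative grows quadratically in $\xi_1$ for $|\xi|>1$. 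The plan is to change variables to $\mu=\tau+\psi(\xi_1)$ on each interval where $\psi$ is monotone, picking up the Jacobian $|\psi'(\xi_1)|^{-1}$. The quantitative lower bound on $|\psi'|$—which reflects the separation of the two waves—converts the $\xi_1$-integral into one of the schematic form $\int \lr{\xi_1}^{2|k|}|\psi'(\xi_1)|^{-1}\lr{\tau+\psi(\xi_1)}^{-2B}\,d\mu$, and the largeness of $\lr{\Gamma}=\lr{\xi^2+\varepsilon^2\xi^4}$ enters precisely because the range and growth of $\psi$ scale like $\Gamma$ when $|\xi|>1$. Collecting the exponents and comparing with $2B-\tfrac14$ should yield the claimed factor $\lr{\Gamma}^{-(B-1/8)}$ together with the polynomial loss $\lr{\xi}^{|k|}$ coming from the weight ratio $\lr{\xi_1}^{k}/\lr{\xi_1-\xi}^{k}$, which is bounded by $C\lr{\xi}^{|k|}$ in either sign of $k$.

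The main obstacle will be the $\xi_1$-integration after the change of variables, specifically tracking the interplay between the vanishing of $\psi'$ near $\xi_1=\xi/2$ and the quadratic growth of $\psi'$ for large $|\xi_1|$. Near the critical point the Jacobian blows up, so there one must instead use the convexity (the lower bound on $\psi''$, which is of size $\varepsilon^2|\xi|\,|\xi_1-\xi/2|$) together with a van der Corput / second-derivative estimate to control the contribution, whereas in the region of large $|\xi_1|$ one relies on the monotonicity and the decay from $\lr{\tau+\psi}^{-2B}$. Balancing these two regimes to produce a single clean power of $\lr{\Gamma}$, uniformly in $\tau$ and $\xi$ and with the dependence on $\varepsilon$ tracked correctly (the remark predicts order $\varepsilon^{-1/4}$), is the delicate part; the hypothesis $1/6<B<1/2$ is presumably exactly what guarantees both the convergence of the $\xi_1$-integral at infinity and enough decay to absorb the critical-point contribution. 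I would organize the proof by splitting the $\xi_1$-axis into the region near $\xi/2$ and its complement, and further according to the size of $|\xi_1|$ relative to $|\xi|$ and to $\varepsilon^{-1}$, estimating each piece separately and summing.
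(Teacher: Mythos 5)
Your first two reductions coincide with the paper's: Lemma~\ref{L:s-1_s-2} to integrate out $\tau_1$, and the two-sided bound $\lr{\xi}^{-2|k|}\leq\lr{\xi_1}^{2k}/\lr{\xi_1-\xi}^{2k}\leq\lr{\xi}^{2|k|}$ to strip off the weights, leaving a one-dimensional integral to be bounded by $\lr{\Gamma}^{-(2B-1/4)}$. But your analysis of that integral rests on a misreading of the phase. Writing $\psi(\xi_1)=\Phie{\xi_1-\xi}-\Phie{\xi_1}$, your own formula gives $\psi'(\xi_1)=-2\xi-4\varepsilon^2\xi\bigl[3(\xi_1-\tfrac{\xi}{2})^2+\tfrac{\xi^2}{4}\bigr]$, which never vanishes: $|\psi'|\geq 2\xi+\varepsilon^2\xi^3\geq\Gamma/|\xi|$ \emph{everywhere}. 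There is no critical point and the Jacobian $|\psi'|^{-1}$ never blows up; the point $\xi_1=\xi/2$ is an \emph{inflection} point, where $\psi''=-24\varepsilon^2\xi(\xi_1-\xi/2)$ vanishes, so the van der Corput second-derivative estimate you propose to deploy there has no second-derivative lower bound to work with and would fail. (Relatedly, $\psi$ is not convex; it changes concavity at $\xi/2$.) The genuine difficulty is the one you mention only in passing: since $2B<1$, the level-set bound $\frac{|\xi|}{\Gamma}\int\lr{\mu}^{-2B}d\mu$ diverges over an infinite $\mu$-range, and convergence must come from the quadratic growth of $|\psi'|$ (equivalently the cubic growth of $\psi$) for large $|\xi_1|$ --- this is exactly where the hypothesis $B>1/6$ enters, via $\int(1+\eta^3)^{-2B}d\eta<\infty$.

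The paper's route makes this transparent by substituting $\xi_1=\tfrac{1}{2}\xi+\eta$ (centering at the inflection point), which kills the quadratic term and turns the phase into $\tau+(2\xi+\varepsilon^2\xi^3)\eta+4\varepsilon^2\xi\eta^3$, an odd cubic plus $\tau$. It then splits into $\eta\gtrless 0$ and $\tau\gtrless 0$, uses the linear lower bound $(\xi+\varepsilon^2\xi^3)\eta$ on $[0,\xi]$ and the cubic term on $[\xi,\infty)$ when there is no sign change, and when the phase has a root $R$ it uses convexity of the cubic on the half-line to dominate $|f_{\tau,\xi}|$ from below by the chord $|\tau+P\eta|$ on $[0,R]$ with $P>\xi+\varepsilon^2\xi^3$, plus translated versions of the previous bounds on $[R,R+\xi]$ and $[R+\xi,\infty)$. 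Your level-set strategy can be repaired along the same lines once you drop the stationary-point picture (split according to $|\xi_1-\xi/2|\lessgtr|\xi|$ and use $|\psi'|\geq\Gamma/|\xi|$ on the inner region, $|\psi'|\gtrsim\varepsilon^2|\xi|\xi_1^2$ on the outer one), but as written the key step targets a difficulty that does not exist and misses the one that does.
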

\begin{proof}

The condition $|\xi|>1$ implies $|\xi|\approx\lr{\xi}$ and $\Gamma\approx\lr{\Gamma}$. We can switch
between them in the following discussion
by multiplying a uniform constant. \par

By Lemma~\ref{L:s-1_s-2}, we have
\[
\int_{-\infty}^{\infty}\frac{d\tau_1}{\lr{\tau_1+\Phie{\xi_1}}^{2B_1}
\lr{\tau_1-\tau+\Phie{\xi_1-\xi}}^{2B_2}}\leq \frac{C}{\lr{\tau-\Phie{\xi_1-\xi}+\Phie{\xi_1}}^{2B}}.
\]

Hence we are reduced to proving that
\[
{\bigg(}\int_{-\infty}^{\infty}\frac{\lr{\xi_1}^{2k}d\xi_1}{
\lr{\tau-\Phie{\xi_1-\xi}+\Phie{\xi_1}}^{2B}\lr{\xi_1-\xi}^{2k}}{\bigg)^{1/2}}\leq
 C(B,\varepsilon)\frac{
\lr{\xi}^{|k|}}{\lr{\Gamma}^{(B-\frac{1}{8})}}.
\]

Let $\xi_1=\frac{1}{2}\xi+\eta$. We note that
\[
\begin{split}
\tau-\Phie{\xi_1-\xi}+\Phie{\xi_1}&=\tau-\Phie{\eta-\frac{1}{2}\xi}+\Phie{\eta+\frac{1}{2}\xi}\\
&=\tau+(2\xi+\varepsilon^2\xi^3)\eta+4\varepsilon^2\xi\eta^3
\end{split}
\]
and the integral becomes
\[
\begin{split}
&\int_{-\infty}^{\infty}\frac{\lr{\xi_1}^{2k}}{\lr{\tau
-\Phie{\xi_1-\xi}+\Phie{\xi_1}}^{2B}\lr{\xi-\xi_1}^{2k}}\,d\xi_1\\[2mm]
&\quad =\int_{-\infty}^{\infty}
\frac{\lr{\eta+\frac{1}{2}\xi}^{2k}}{\lr{\tau+(2\xi+\varepsilon^2\xi^3)\eta+4\varepsilon^2\xi\eta^3}^{2B}
\lr{\eta-\frac{1}{2}\xi}^{2k}}\,d\eta.
\end{split}
\]
The key observation is that the following inequality
\begin{equation}\label{E:key_observation}
\lr{\xi}^{-2|k|}\leq \frac{\lr{\xi_1}^{2k}}{\lr{\xi_1-\xi}^{2k}}
=\frac{\lr{\eta+\frac{1}{2}\xi}^{2k}}{\lr{\eta-\frac{1}{2}\xi}^{2k}} \leq \lr{\xi}^{2|k|}
\end{equation}
holds for $k\in\mathbb{R}$. From which we have
\[
\begin{split}
&\int_{-\infty}^{\infty}
\frac{\lr{\eta+\frac{1}{2}\xi}^{2k}}{\lr{\tau+(2\xi+\varepsilon^2\xi^3)\eta+4\varepsilon^2\xi\eta^3}^{2B}
\lr{\eta-\frac{1}{2}\xi}^{2k}}\,d\eta\\[2mm]
&\qquad \leq
\lr{\xi}^{2|k|}\int_{-\infty}^{\infty}
\frac{d\eta}{\lr{\tau+(2\xi+\varepsilon^2\xi^3)\eta+4\varepsilon^2\xi\eta^3}^{2B}}.
\end{split}
\]
Hence we are reduced to proving that
\begin{equation}\label{E:key-integration-1-0}
\int_{-\infty}^{\infty}\frac{d\eta}{\lr{\tau+(2\xi+\varepsilon^2\xi^3)\eta+4\varepsilon^2\xi\eta^3}^{2B}}
\leq C(B,\varepsilon)\frac{1}{\lr{\Gamma}^{(2B-\frac{1}{4})}}.
\end{equation}
We rewrite the left hand side as
\begin{equation}\label{E:key-integration-1}
\begin{split}
&\int_{0}^{\infty}\frac{d\eta}{(1+|\tau-(2\xi+\varepsilon^2\xi^3)\eta-4\varepsilon^2\xi\eta^3|)^{2B}}\\[2mm]
&{\hskip 1.5cm} +\int_{0}^{\infty}\frac{d\eta}{(1+|\tau+(2\xi+\varepsilon^2\xi^3)\eta+4\varepsilon^2\xi\eta^3|)^{2B}}.
\end{split}
\end{equation}
From this representation, we see that it suffices to study the case when $\xi$ is positive, that is $\xi>1$.
In the following we should discuss different cases according to the sign of $\tau$.
We note that it is sufficient to estimate the second integral due to that 
~\eqref{E:key-integration-1} is symmetric with respect to $\tau$.
For simplicity of notation, we let $f_{\tau,\xi}(\eta)=\tau+(2\xi+\varepsilon^2\xi^3)\eta+4\varepsilon^2\xi\eta^3$.

\noindent{\bf Case 1.} $\tau\geq 0$. Let $\eta=\varepsilon^{-1/2} \lr{\Gamma}^{1/4}\eta'$.\par
Note that $\lr{\Gamma}^{1/4}\leq 2\xi $ when $\xi\geq 1$. Then we can estimate the integral as
\begin{equation}\label{E:key-integral-1-1a}
\begin{split}
&\int_0^{\xi}\frac{d\eta}{(1+|f_{\tau,\xi}(\eta)|)^{2B}}+\int_{\xi}^{\infty}\frac{d\eta}{(1+|f_{\tau,\xi}(\eta)|)^{2B}}\\
&\leq\int_0^{\xi}\frac{d\eta}{(1+(\xi+\varepsilon^2\xi^3)\eta)^{2B}}
+\int_{\xi}^{\infty}\frac{d\eta}{(1+\Gamma+\varepsilon^2\xi\eta^3)^{2B}}\\
&\leq\frac{1}{(1-2B)}\frac{1}{(\xi+\varepsilon^2\xi^3)}(1+\Gamma)^{1-2B}\\
&{\hspace{2cm}}+\frac{\varepsilon^{-1/2}\lr{\Gamma}^{1/4}}{\lr{\Gamma}^{2B}}
\int_{\frac{\varepsilon^{1/2}\xi}{\lr{\Gamma}^{1/4}}}^{\infty}
\left(1+\frac{\varepsilon^{1/2}\xi}{\lr{\Gamma}^{1/4}}\eta'^3\right)^{-2B}d\eta'.\\
\end{split}
\end{equation}
The first term is bounded by
\[
 C(B,\varepsilon)\frac{1}{\lr{\Gamma}^{2B-\frac{1}{4}}}
\]
by using $\Gamma=\xi(\xi+\varepsilon^2\xi^3)$. The second term in the last line of~\eqref{E:key-integral-1-1a}
enjoys the same bound provided $1/6<B<1/2$ since the coefficient of $\eta'^3$ is larger than $2^{-1}\varepsilon^{1/2}$.

\noindent{\bf Case 2.} $\tau<0$.\par

Let $R$ be the root of $f_{\tau,\xi}(\eta)=0$.  We consider the following decomposition
\begin{equation}\label{E:key-integral-1-1b}
\begin{split}
\int_0^R\frac{d\eta}{(1+|f_{\tau,\xi}(\eta)|)^{2B}}+\int_R^{R+\xi}\frac{d\eta}{(1+|f_{\tau,\xi}(\eta)|)^{2B}}+
\int_{R+\xi}^{\infty}\frac{d\eta}{(1+|f_{\tau,\xi}(\eta)|)^{2B}}.
\end{split}
\end{equation}
We begin with the estimate of the second integral. Observe that
\[
1+|f_{\tau,\xi}(R)+(\xi+\varepsilon^2\xi^3)r|<1+|f_{\tau,\xi}(R+r)|
\]
holds for $0\leq r\leq \xi$. Therefore
\begin{equation}\label{E:A-to-A-plus-xi}
\int_R^{R+\xi}\frac{d\eta}{(1+|f_{\tau,\xi}(\eta)|)^{2B}}\leq\int_0^{\xi}\frac{d\eta}{(1+(\xi+\varepsilon^2\xi^3)\eta)^{2B}}
\leq C(B,\varepsilon)\frac{1}{\lr{\Gamma}^{2B-\frac{1}{4}}}.
\end{equation}
Similarly for $s\geq 0$, we have
\[
\begin{split}
1+\Gamma+\varepsilon^2\xi(\xi+s)^3&=1+|f_{\tau,\xi}(R)+(\xi+\varepsilon^2\xi^3)\xi+\varepsilon^2\xi(\xi+s)^3|\\
&<1+|f_{\tau,\xi}(R+\xi+s)|.
\end{split}
\]
Hence
\[
\int_{R+\xi}^{\infty}\frac{d\eta}{(1+|f_{\tau,\xi}(\eta)|)^{2B}}\leq
\int_{\xi}^{\infty}\frac{d\eta}{(1+\Gamma+\varepsilon^2\xi\eta^3)^{2B}}
\leq C(B,\varepsilon)\frac{1}{\lr{\Gamma}^{2B-\frac{1}{4}}}.
\]
Now we turn to the estimate of the first integral of~\eqref{E:key-integral-1-1b}.
Let $P$ be the number such that $PR=-\tau$. It is easy to see that
\begin{equation}\label{E:P}
P>(\xi+\varepsilon^2\xi^3).
\end{equation}
By convexity of $f_{\tau,\xi}(\eta)$, we have
\[
(1+|f_{\tau,\xi}(\eta)|)\geq (1+|\tau+P\eta|)
\]
for $0\leq\eta\leq R$.  Thus we have
\[
\begin{split}
\int_0^R\frac{d\eta}{(1+|f_{\tau,\xi}(\eta)|)^{2B}}&\leq\int_0^{R}\frac{d\eta}{(1-\tau-P\eta)^{2B}}\\
&=\frac{1}{(1-2B)}\frac{1}{-P}(1-(1-\tau)^{1-2B})\\
&\leq C(B)\frac{(-\tau)^{1-2B}}{P}.
\end{split}
\]
The last inequality follows from the fact that $0<1-2B<1$.

There are two sub-cases to discuss.
In case $|\tau|\leq \Gamma$, we combine~\eqref{E:P} and above to get
\begin{equation}\label{E:estimate_in_1_1b}
\frac{(-\tau)^{1-2B}}{P}\leq\frac{\lr{\Gamma}^{1-2B}}{(\xi+\varepsilon^2\xi^3)}\cdot\frac{\xi}{\xi}
\leq\frac{2\xi}{\lr{\Gamma}^{2B}}\leq C(B,\varepsilon)\frac{1}{\lr{\Gamma}^{2B-\frac{1}{4}}}.
\end{equation}
In case $|\tau|>|\Gamma|$, we use the fact
\[
R\leq\varepsilon^{-2/3}(\frac{-\tau}{\xi})^{1/3}.
\]
Combining above inequalities and the relation $PR=-\tau$, we have
\[
\frac{(-\tau)^{1-2B}}{P}=\frac{R}{(-\tau)^{2B}}\leq \frac{C(B,\varepsilon)}{\lr{\tau}^{2B-1/3}\lr{\xi}^{1/3}}
\leq  \frac{C(B,\varepsilon)}{\lr{\Gamma}^{2B-1/3}\lr{\xi}^{1/3}}\leq \frac{C(B,\varepsilon)}{\lr{\Gamma}^{2B-1/4}}
\]
since $B>1/6$.
\end{proof}

\begin{lem}[Two 4th order waves]\label{L:key-lemma-2}
Assume $1/4< B_1\leq B_2$ and $1/3<B<1/2$ where $2B=2B_1-[1-2B_2]_{+}$.
When $|\xi|> 1$ there exists a constant $C$ independent of $\tau$ and $\xi$ such that
for $k\geq 0$
\begin{equation}\label{E:key-estimate-xi-2}
\begin{split}
&{\bigg (}\int_{-\infty}^{\infty}\int_{-\infty}^{\infty}\frac{\lr{\xi_1}^{-2k}
\lr{\xi_1-\xi}^{-2k}}
{\lr{\tau_1+\Phie{\xi_1}}^{2B_1}\lr{\tau_1-\tau+\Phie{\xi_1-\xi}}^{2B_2}}d\tau_1 d\xi_1{\bigg )^{1/2}}\\
&\qquad \leq C(B,\varepsilon)\frac{
\lr{\xi}^{-k}}{\lr{\Gamma}^{(B-\frac{1}{8})}}
\end{split}
\end{equation}
and for $-1/4\leq k<0$
\begin{equation}\label{E:key-estimate-xi-3}
\begin{split}
&{\bigg (}\int_{-\infty}^{\infty}\int_{-\infty}^{\infty}\frac{\lr{\xi_1}^{-2k}
\lr{\xi_1-\xi}^{-2k}}
{\lr{\tau_1+\Phie{\xi_1}}^{2B_1}\lr{\tau_1-\tau+\Phie{\xi_1-\xi}}^{2B_2}}d\tau_1 d\xi_1{\bigg )^{1/2}}\\
&\qquad \leq C(B,k,\varepsilon)\frac{
\max\{\lr{A}^{-2k},\lr{\xi}^{-2k}\}}{\lr{\Gamma}^{(B-\frac{1}{8})}}
\end{split}
\end{equation}
where $\Gamma=\xi^2+\varepsilon^2\xi^4$ and $A=\varepsilon^{-2/3}(|\frac{\tau}{\xi}|)^{1/3}$.
\end{lem}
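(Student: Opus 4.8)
The plan is to follow the proof of Lemma~\ref{L:key-lemma-1} almost verbatim, reusing every one‑dimensional integral estimate obtained there, and to isolate the only new feature: the symmetric weight $\lr{\xi_1}^{-2k}\lr{\xi_1-\xi}^{-2k}$, which now sits as a product in the numerator rather than as the ratio controlled by~\eqref{E:key_observation}. First I would integrate out $\tau_1$ by Lemma~\ref{L:s-1_s-2} with $s_1=-\Phie{\xi_1}$, $s_2=\tau-\Phie{\xi_1-\xi}$, $a_-=B_1$, $a_+=B_2$, collapsing the $\tau_1$ integral to $\lr{\tau+\Phie{\xi_1}-\Phie{\xi_1-\xi}}^{-2B}$. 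After the shift $\xi_1=\tfrac12\xi+\eta$ this is exactly the phase $f_{\tau,\xi}(\eta)=\tau+(2\xi+\varepsilon^2\xi^3)\eta+4\varepsilon^2\xi\eta^3$ of Lemma~\ref{L:key-lemma-1}, so the whole problem reduces to bounding
\[
I:=\int_{-\infty}^{\infty}\frac{\lr{\eta+\tfrac12\xi}^{-2k}\lr{\eta-\tfrac12\xi}^{-2k}}{\lr{f_{\tau,\xi}(\eta)}^{2B}}\,d\eta.
\]
Since the weight is even in $\eta$, the decomposition~\eqref{E:key-integration-1} into two half‑line integrals and the reduction to $\xi>1$ together with a single sign of $\tau$ go through as in Lemma~\ref{L:key-lemma-1}.

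For $k\geq 0$ the exponent $-2k$ is nonpositive, so both factors are at most $1$; moreover the points $\eta+\tfrac12\xi$ and $\eta-\tfrac12\xi$ are a distance $\xi$ apart, so at least one has absolute value $\gtrsim\lr{\xi}$, whence $\lr{\eta+\tfrac12\xi}^{-2k}\lr{\eta-\tfrac12\xi}^{-2k}\leq C\lr{\xi}^{-2k}$. Pulling this constant out of $I$ leaves precisely the integral estimated in~\eqref{E:key-integration-1-0}, so $I\leq C(B,\varepsilon)\lr{\xi}^{-2k}\lr{\Gamma}^{-(2B-\frac14)}$, and taking square roots gives~\eqref{E:key-estimate-xi-2}.

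For $-1/4\leq k<0$ write $m=-2k\in(0,\tfrac12]$. On the region $|\eta|\lesssim|\xi|$ the weight is bounded by $C\lr{\xi}^{2m}\leq C\max\{\lr{\xi},\lr{A}\}^{2m}$ and the remaining integral is controlled by~\eqref{E:key-integration-1-0} as before. On the region $|\eta|\gtrsim|\xi|$ the weight is $\approx\lr{\eta}^{2m}$, and here I would run the decomposition $\int_0^R+\int_R^{R+\xi}+\int_{R+\xi}^{\infty}$ of~\eqref{E:key-integral-1-1b}, where $R$ is the root of $f_{\tau,\xi}$. The key point, already recorded in Lemma~\ref{L:key-lemma-1}, is that $R\leq\xi$ when $|\tau|\leq\Gamma$ and $R\leq\varepsilon^{-2/3}(|\tau|/\xi)^{1/3}=A$ when $|\tau|>\Gamma$, so $R\lesssim\max\{\xi,A\}$. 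On the first two pieces $\eta\lesssim R+\xi\lesssim\max\{\xi,A\}$, the weight factors out as $\max\{\lr{\xi},\lr{A}\}^{2m}$ and the remaining integrals are bounded by~\eqref{E:key-integration-1-0} and~\eqref{E:A-to-A-plus-xi}. On the tail, the substitution $\eta=R+\xi+s$ together with the convexity bound $1+\Gamma+\varepsilon^2\xi(\xi+s)^3<1+|f_{\tau,\xi}(R+\xi+s)|$ from Lemma~\ref{L:key-lemma-1} reduces $\int_{R+\xi}^{\infty}$ to $\int_0^{\infty}(\max\{\xi,A\}^{2m}+s^{2m})(1+\Gamma+\varepsilon^2\xi(\xi+s)^3)^{-2B}\,ds$.

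The main obstacle is this last, weighted tail integral: the polynomial growth $s^{2m}$ competes with the decay of the cubic phase, and $\int_\xi^{\infty}s^{2m-6B}\,ds$ converges only when $6B-2m>1$. This is exactly where the hypotheses $B>1/3$ and $k\geq-1/4$ (so that $m\leq\tfrac12$) are used, and it explains why Lemma~\ref{L:key-lemma-2} needs the stronger lower bound $B>1/3$ in place of the $B>1/6$ of Lemma~\ref{L:key-lemma-1}. Carrying out this integral yields $I\leq C(B,k,\varepsilon)\max\{\lr{\xi},\lr{A}\}^{2m}\lr{\Gamma}^{-(2B-\frac14)}$; taking square roots and recalling $m=-2k>0$ gives~\eqref{E:key-estimate-xi-3}, since $\max\{\lr{A}^{-2k},\lr{\xi}^{-2k}\}=\max\{\lr{\xi},\lr{A}\}^{m}$.
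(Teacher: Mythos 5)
Your proposal is correct and follows essentially the same route as the paper: reduce via Lemma~\ref{L:s-1_s-2}, shift to $\xi_1=\tfrac12\xi+\eta$, use $\lr{\eta+\tfrac12\xi}^{-2k}\lr{\eta-\tfrac12\xi}^{-2k}\leq C(k)\lr{\xi}^{-2k}$ for $k\geq 0$ to fall back on~\eqref{E:key-integration-1-0}, and for $-1/4\leq k<0$ bound the weight by $\max\{\lr{R},\lr{\xi}\}^{-4k}\lesssim\max\{\lr{A},\lr{\xi}\}^{-4k}$ on $[0,R+\xi]$ and absorb the growth $\lr{\eta+\xi}^{-4k}$ into the cubic tail, with convergence exactly under $6B+4k>1$. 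You also correctly identify why the hypotheses strengthen to $B>1/3$ and $k\geq -1/4$, which is precisely the condition the paper invokes in its Case 2a and 2b.
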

\begin{proof}

By Lemma~\ref{L:s-1_s-2}, we have
\[
\int_{-\infty}^{\infty}\frac{d\tau_1}{\lr{\tau_1+\Phie{\xi_1}}^{2B_1}\lr{\tau_1-\tau+
\Phie{\xi_1-\xi}}^{2B_2}}\leq \frac{C}{\lr{\tau-\Phie{\xi_1-\xi}+\Phie{\xi_1}}^{2B}}.
\]
Hence the problem is reduced to proving that
\begin{equation}
{\bigg (}\int_{-\infty}^{\infty}\frac{\lr{\xi_1}^{-2k}\lr{\xi_1-\xi}^{-2k}}{\lr{\tau
-\Phie{\xi_1-\xi}+\Phie{\xi_1}}^{2B}}\,d\xi_1{\bigg )^{1/2}}\leq C(\varepsilon)\frac{
\lr{\xi}^{-k}}{\lr{\Gamma}^{(B-\frac{1}{8})}}
\end{equation}
for $k\geq 0$ and
\begin{equation}
{\bigg (}\int_{-\infty}^{\infty}\frac{\lr{\xi_1}^{-2k}\lr{\xi_1-\xi}^{-2k}}{\lr{\tau
-\Phie{\xi_1-\xi}+\Phie{\xi_1}}^{2B}}\, d\xi_1{\bigg )^{1/2}}\leq C(\varepsilon)\frac{
\max\{\lr{A}^{-2k},\lr{\xi}^{-2k}\}}{\lr{\Gamma}^{(B-\frac{1}{8})}}
\end{equation}
for $k<0$. Let $\xi_1=\frac{1}{2}\xi+\eta$. We note that
\[
\begin{split}
\tau-\Phie{\xi_1-\xi}+\Phie{\xi_1}
&=\tau-\Phie{\eta-\frac{1}{2}\xi}+\Phie{\eta+\frac{1}{2}\xi}\\
&=\tau+(2\xi+\varepsilon^2\xi^3)\eta+4\varepsilon^2\xi\eta^3.
\end{split}
\]
Then
\[
\begin{split}
&\int_{-\infty}^{\infty}\frac{\lr{\xi_1}^{-2k}\lr{\xi-\xi_1}^{-2k}}{\lr{\tau
-\Phie{\xi_1-\xi}+\Phie{\xi_1}}^{2B}}\, d\xi_1\\[2mm]
&\qquad =\int_{-\infty}^{\infty}
\frac{\lr{\eta+\frac{1}{2}\xi}^{-2k}\lr{\eta-\frac{1}{2}\xi}^{-2k}}{\lr{\tau+(2\xi+\varepsilon^2\xi^3)\eta+4\varepsilon^2\xi\eta^3}^{2B}
}\,d\eta.
\end{split}
\]

{\bf Case 1.} $k\geq 0$. \par
Employing the inequality
\[
\lr{\xi}^{2k}\leq C(k)\lr{\eta-\frac{1}{2}\xi}^{2k}\lr{\eta+\frac{1}{2}\xi}^{2k}
\]
and the inequality~\eqref{E:key-integration-1-0} in the Lemma~\ref{L:key-lemma-1}, we can conclude~\eqref{E:key-estimate-xi-2}.

{\bf Case 2.} $-1/4\leq k<0$.\par
We note that $\lr{\eta-\frac{1}{2}\xi}^{2k}\lr{\eta+\frac{1}{2}\xi}^{2k}$ is an even function of
$\eta$. Let $f_{\tau,\xi}(\eta)=\tau+2(\xi+\varepsilon^2\xi^3)\eta+4\varepsilon^2\xi\eta^3$.
By the same discussion as in~\eqref{E:key-integration-1} in the proof of Lemma~\ref{L:key-lemma-1}, we only have to show
\[
\int_0^{\infty}\frac{\lr{\eta-\frac{1}{2}\xi}^{-2k}\lr{\eta+\frac{1}{2}\xi}^{-2k}}
{(1+|f_{\tau,\xi}(\eta)|)^{2B}}\,d\eta
\leq C(\varepsilon)\frac{
\max\{\lr{A}^{-4k},\lr{\xi}^{-4k}\}}{\lr{\Gamma}^{(2B-\frac{1}{4})}}
\]
for $\xi>1$. Using the fact $$
\lr{\eta-\frac{1}{2}\xi}^{-2k}\lr{\eta+\frac{1}{2}\xi}^{-2k}\leq \lr{\eta+\xi}^{-4k},\quad \eta>0,\xi>0,
$$
it suffices to show
\begin{equation}
\int_0^{\infty}\frac{\lr{\eta+\xi}^{-4k}}
{(1+|f_{\tau,\xi}(\eta)|)^{2B}}\,d\eta
\leq C(\varepsilon)\frac{
\max\{\lr{A}^{-4k},\lr{\xi}^{-4k}\}}{\lr{\Gamma}^{(2B-\frac{1}{4})}}.
\end{equation}
There are two sub-cases to consider in the following.

\noindent{\bf Case 2a.} $\tau\geq 0$.\par
Because $\lr{\Gamma}^{1/4}\leq 2\xi $ when $\xi>1$, we can estimate the integral as
\begin{equation}
\begin{split}
&\int_0^{\xi}\frac{\lr{\eta+\xi}^{-4k}}{(1+|f_{\tau,\xi}(\eta)|)^{2B}}\,d\eta+
\int_{\xi}^{\infty}\frac{\lr{\eta+\xi}^{-4k}}{(1+|f_{\tau,\xi}(\eta)|)^{2B}}\,d\eta\\
&\leq 2\lr{\xi}^{-4k}\int_0^{\xi}\frac{d\eta}{(1+(\xi+\varepsilon^2\xi^3)\eta)^{2B}}
+\int_{\xi}^{\infty}\frac{\lr{\eta+\xi}^{-4k}}{(1+\Gamma+\varepsilon^2\xi\eta^3)^{2B}}\,d\eta.\\
\end{split}
\end{equation}
From estimate~\eqref{E:key-integral-1-1a}, the first term of the last line is bounded by
\[
 C(B,\varepsilon)\frac{\lr{\xi}^{-4k}}{\lr{\Gamma}^{2B-\frac{1}{4}}}.
\]
Let $\eta=\varepsilon^{-1/2} \lr{\Gamma}^{1/4}\eta'$. We can estimate the last integral of (5.15) as
\begin{equation}\label{E:xi-to-infty}
\begin{split}
\int_{\xi}^{\infty}&\frac{\lr{\eta+\xi}^{-4k}}
{(1+\Gamma+\varepsilon^2\xi\eta^3)^{2B}}\,d\eta\\
&=\lr{\xi}^{-4k}\frac{1}{\lr{\Gamma}^{2B}}
\int_{\xi}^{\infty}\left(1+\frac{\eta}{\lr{\xi}}\right)^{-4k}\left(1+\frac{\varepsilon^2\xi\eta^3}{\lr{\Gamma}^{1/4}}\right)^{-2B}d\eta\\
&=\lr{\xi}^{-4k}\frac{\varepsilon^{-1/2}\lr{\Gamma}^{1/4}}{\lr{\Gamma}^{2B}}
   \int_{\frac{\varepsilon^{2/3}\xi}{\lr{\Gamma}^{1/4}}}^{\infty}
\left(1+\frac{\varepsilon^{-1/2} \lr{\Gamma}^{1/4}\eta'}{\lr{\xi}}\right)^{-4k}\\
&\hskip4cm
\left(1+\frac{\varepsilon^{1/2}\xi}{\lr{\Gamma}^{1/4}}\eta'^3\right)^{-2B}d\eta' \\
&\leq  C(B,\varepsilon)\frac{\lr{\xi}^{-4k}}{\lr{\Gamma}^{2B-\frac{1}{4}}}\int_{\frac{\varepsilon^{2/3}\xi}{\lr{\Gamma}^{1/4}}}
^{\infty}(1+2\eta')^{-4k}
(1+\varepsilon^{1/2}\eta'^3)^{-2B}d\eta'\\
&\leq  C(B,k,\varepsilon)\frac{\lr{\xi}^{-4k}}{\lr{\Gamma}^{2B-\frac{1}{4}}}
\end{split}
\end{equation}
provided $6B+4k>1$. (Note that the condition $-1/4\leq k<0$ implies $1/3<B$.)

\noindent{\bf Case 2b.} $\tau<0$. Let $R$ be the root of $f_{\tau,\xi}(\eta)=0$.
It is easy to see
\begin{equation}\label{E:R}
R\leq \varepsilon^{-2/3}\left(\frac{-\tau}{\xi}\right)^{1/3}=A.
\end{equation}
We consider the following decomposition
\begin{equation}\label{E:split-2-case-2}
\begin{split}
\int_0^R\frac{\lr{\eta+\xi}^{-4k}d\eta}{(1+|f_{\tau,\xi}(\eta)|)^{2B}}+\int_R^{R+\xi}\frac{\lr{\eta+\xi}^{-4k}d\eta}{(1+|f_{\tau,\xi}(\eta)|)^{2B}}+
\int_{R+\xi}^{\infty}\frac{\lr{\eta+\xi}^{-4k}d\eta}{(1+|f_{\tau,\xi}(\eta)|)^{2B}}.
\end{split}
\end{equation}
The numerator of the integrand in the first two integrals in~\eqref{E:split-2-case-2} is clearly bounded by  $\max\{R,\xi\}^{-4k}$. From the estimates~\eqref{E:key-integral-1-1b} and~\eqref{E:A-to-A-plus-xi} we know these two integrals are bounded by
\[
C(B,\varepsilon)\frac{\max\{\lr{R},\lr{\xi}\}^{-4k}}{\lr{\Gamma}^{2B-\frac{1}{4}}}
\leq C(B,\varepsilon)\frac{\max\{\lr{A},\lr{\xi}\}^{-4k}}{\lr{\Gamma}^{2B-\frac{1}{4}}}.
\]
Next, we note that  for $s\geq 0$
\[
\begin{split}
1+\Gamma+\varepsilon^2\xi(\xi+s)^3&=1+|f_{\tau,\xi}(R)+(\xi+\varepsilon^2\xi^3)\xi+\varepsilon^2\xi(\xi+s)^3|\\
&<1+|f_{\tau,\xi}(R+\xi+s)|.
\end{split}
\]
Hence, for the third integral of~\eqref{E:split-2-case-2} we have
\[
\int_{R+\xi}^{\infty}\frac{\lr{\eta+\xi}^{-4k}}{(1+|f_{\tau,\xi}(\eta)|)^{2B}}\,d\eta
\leq
\int_{\xi}^{\infty}\frac{\lr{\eta+(R+\xi)}^{-4k}}
{(1+\Gamma+\varepsilon^2\xi\eta^3)^{2B}}\,d\eta.
\]
Similar to~\eqref{E:xi-to-infty}, we have
\[
\begin{split}
&\int_{\xi}^{\infty}\frac{\lr{\eta+(R+\xi)}^{-4k}}
{(1+\Gamma+\varepsilon^2\xi\eta^3)^{2B}}\,d\eta\\
&=\lr{R+\xi}^{-4k}\frac{1}{\lr{\Gamma}^{2B}}
\int_{\xi}^{\infty}\left(1+\frac{\eta}{\lr{R+\xi}}\right)^{-4k}\left(1+\frac{\varepsilon^2\xi\eta^3}{\lr{\Gamma}^{1/4}}\right)^{-2B}d\eta\\
&\leq  C(B,\varepsilon)\frac{\max\{\lr{R},\lr{\xi}\}^{-4k}}{\lr{\Gamma}^{2B-\frac{1}{4}}}
\leq C(B,\varepsilon)\frac{\max\{\lr{A},\lr{\xi}\}^{-4k}}{\lr{\Gamma}^{2B-\frac{1}{4}}}.
\end{split}
\]

\end{proof}

\section{2nd order wave vs 4th order wave}

In this section we shall prove Lemma~\ref{L:key-lemma-3} which characterizes the interaction of
a second order wave $\lr{\tau_1-\tau\pm\sqrt{\Phie{\xi_1-\xi}}}$ with a fourth order wave
$\lr{\tau_1+\Phie{\xi_1}}$. The majority part of
interaction possesses the features of the second order wave as well as those of the
fourth order wave. Comparing to Lemma~\ref{L:key-lemma-1} and~\ref{L:key-lemma-2}, we
see that its magnitude is of second order while the power of wave is determined by the
fourth order wave.

\begin{lem}\label{L:key-lemma-3}
Assume $0\leq B_1\leq B_2$, $B_1+B_2>1/2$. Let $2B=2B_1-[1-2B_2]_{+}$. We also assume
$1/8<B<1/2$.
When $|\xi|>32\varepsilon^{-2}$, $k\leq-1/4\;,\;l\geq 0$ and $8B-2(l-k)>1$
there exists a constant $C(B,k,l,\varepsilon)$ which is independent of $\tau,\xi$ such that
\begin{equation}\label{E:key-estimate-xi-4}
\begin{split}
&{\bigg(}\int_{-\infty}^{\infty}\int_{-\infty}^{\infty}
\frac{\lr{\xi_1-\xi}^{2l}\lr{\xi_1}^{-2k}d\tau_1d\xi_1}{\lr{\tau_1+\Phie{\xi_1}}^{2B_2}
\lr{\tau_1-\tau\pm\sqrt{\Phie{\xi_1-\xi}}}^{2B_1}}{\bigg)^{1/2}}\\
&\leq C(B,k,l,\varepsilon){\Big\{}F(\xi,\Lambda_{+})+F(\xi,\Lambda_{-})
{\Big\}}
\end{split}
\end{equation}
where
\[
F(\xi,\Lambda_+)=\frac{
\max\{\lr{\xi},\lr{\Lambda_+}^{1/4}\}^{l-k}}{\lr{\Lambda_+}^{(B-\frac{1}{8})-}}
\;,\;
F(\xi,\Lambda_-)=\frac{
\max\{\lr{\xi},\lr{\Lambda_-}^{1/4}\}^{l-k}}{\lr{\Lambda_-}^{(B-\frac{1}{8})-}}.
\]
The number $\lr{\Lambda_+}=\lr{1}$ if $-\sqrt{\Phie{\xi}}\leq\tau\leq -\frac{1}{2}\sqrt{\Phie{\xi}}$ and $\lr{\Lambda_-}=\lr{1}$ if
$\sqrt{\Phie{\xi}}\leq\tau\leq \frac{3}{2}\sqrt{\Phie{\xi}}$, otherwise $\Lambda_+=\tau+c\sqrt{\Phie{\xi}}$ for some positive $c$ and
$\Lambda_-=\tau+c\sqrt{\Phie{\xi}}$ for some negative $c$ where $3/4\leq |c|\leq 5/4$.

With the same conditions but $l\geq 0$ being replaced by $ l<0$
we have
\[
F(\xi,\Lambda_{+})+F(\xi,\Lambda_{-})+\frac{\lr{\xi}^{-k}}{\lr{\xi}^{3B-}}
\]
inside the bracket of the second line of~\eqref{E:key-estimate-xi-4}.

When $|\xi|\leq 32\varepsilon^{-2}$ and $8B-2(l-k)>1$, we have the bound
\begin{equation}\label{E:key-estimate-xi-5}
\begin{split}
&{\bigg(}\int_{-\infty}^{\infty}\int_{-\infty}^{\infty}
\frac{\lr{\xi_1-\xi}^{2l}\lr{\xi_1}^{-2k}d\tau_1d\xi_1}{\lr{\tau_1+\Phie{\xi_1}}^{2B_1}
\lr{\tau_1-\tau\pm\sqrt{\Phie{\xi_1-\xi}}}^{2B_2}}{\bigg)^{1/2}}\\
&\leq C(B,k,l,\varepsilon).
\end{split}
\end{equation}

\end{lem}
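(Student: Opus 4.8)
The plan is to follow the two–step strategy of Lemmas~\ref{L:key-lemma-1} and~\ref{L:key-lemma-2}: first integrate out $\tau_1$ with the elementary estimate, then analyze the resulting one–dimensional spatial integral through the geometry of the convoluted phase. Taking $a_-=B_1$ on the second order wave and $a_+=B_2$ on the fourth order wave (legitimate since $B_1\leq B_2$ and $2B_1-[1-2B_2]_+=2B$), Lemma~\ref{L:s-1_s-2} gives
\[
\int_{-\infty}^{\infty}\frac{d\tau_1}{\lr{\tau_1+\Phie{\xi_1}}^{2B_2}\lr{\tau_1-\tau\pm\sqrt{\Phie{\xi_1-\xi}}}^{2B_1}}\leq \frac{C}{\lr{\tau+\Phie{\xi_1}\mp\sqrt{\Phie{\xi_1-\xi}}}^{2B}},
\]
so that the square of the left side of~\eqref{E:key-estimate-xi-4} is reduced to
\[
\int_{-\infty}^{\infty}\frac{\lr{\xi_1-\xi}^{2l}\lr{\xi_1}^{-2k}}{\lr{h_{\pm}(\xi_1)}^{2B}}\,d\xi_1,\qquad h_{\pm}(\xi_1):=\tau+\Phie{\xi_1}\mp\sqrt{\Phie{\xi_1-\xi}}.
\]
I would then split along the two regimes $|\xi|>32\varepsilon^{-2}$ and $|\xi|\leq 32\varepsilon^{-2}$ of the statement; the threshold guarantees $\sqrt{\Phie{\xi}}\approx\varepsilon\xi^2$, so that the wave term is controlled by its quartic part.

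The heart of the matter is the phase $h_{\pm}$, which, unlike the odd cubic of Lemmas~\ref{L:key-lemma-1} and~\ref{L:key-lemma-2}, is a genuine mixture of the convex quartic $\Phie{\xi_1}=\xi_1^2+\varepsilon^2\xi_1^4$ against $\mp\sqrt{\Phie{\xi_1-\xi}}$. The observation I would exploit is that $\Phie'(\xi_1)=2\xi_1+4\varepsilon^2\xi_1^3$ is strictly increasing and dominates the slope $\tfrac{d}{d\xi_1}\sqrt{\Phie{\xi_1-\xi}}$ once $|\xi_1|$ is large, so $h_{\pm}$ retains the convex, monotone–crossing behavior that allowed the cubic phase to be integrated. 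Evaluating near $\xi_1=0$ gives $h_{\pm}(0)\approx\tau\mp\sqrt{\Phie{\xi}}$, which identifies the two second order resonances $\Lambda_{\mp}=\tau+c\sqrt{\Phie{\xi}}$; tracking the true stationary point of $h_{\pm}$ rather than $\xi_1=0$ is what produces the admissible band $3/4\leq|c|\leq 5/4$. For the tail, the quartic confines the effective support to $|\xi_1|\lesssim M:=\max\{\lr{\xi},\lr{\Lambda_{\pm}}^{1/4}\}$ (from $\varepsilon^2\xi_1^4\approx|\Lambda_{\pm}|$), so when $l\geq 0$ I would bound the weight $\lr{\xi_1-\xi}^{2l}\lr{\xi_1}^{-2k}$ by $M^{2(l-k)}$ and reuse the one–dimensional decompositions of Lemmas~\ref{L:key-lemma-1} and~\ref{L:key-lemma-2} (in particular~\eqref{E:key-integration-1-0} and~\eqref{E:A-to-A-plus-xi}) to extract the decay $\lr{\Lambda_{\pm}}^{-(2B-1/4)}$ (up to the arbitrarily small $\delta$–loss of the $B-$ convention; the exponent $1/4$ is the quartic scaling). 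Collecting the contributions of the two resonance branches yields $F(\xi,\Lambda_+)+F(\xi,\Lambda_-)$ after taking the square root.

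Two complementary pieces remain. The bound $M^{2(l-k)}$ uses $\lr{\xi_1-\xi}^{2l}\leq M^{2l}$, which is unavailable for $l<0$ near $\xi_1=\xi$, where $\lr{\xi_1-\xi}^{2l}\approx 1$ and $\lr{\xi_1}^{-2k}\approx\lr{\xi}^{-2k}$. I would therefore isolate a neighborhood of $\xi_1=\xi$, where $\sqrt{\Phie{\xi_1-\xi}}$ has a corner ($\approx|\xi_1-\xi|$) while the fourth order slope $\Phie'(\xi)\approx 4\varepsilon^2\xi^3$ dominates the crossing of $h_{\pm}$; the resulting fast crossing produces the extra term $\lr{\xi}^{-k}/\lr{\xi}^{3B-}$ recorded in the $l<0$ part of the lemma. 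In the regime $|\xi|\leq 32\varepsilon^{-2}$ every $\lr{\xi}$–weight is bounded by $C(\varepsilon)$, and the only remaining issue is convergence of the $\xi_1$–integral as $\xi_1\to\infty$, where $h_{\pm}(\xi_1)\approx\varepsilon^2\xi_1^4$ and the integrand behaves like $|\xi_1|^{2(l-k)-8B}$; this is integrable precisely under the hypothesis $8B-2(l-k)>1$, giving the uniform bound~\eqref{E:key-estimate-xi-5}.

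The step I expect to be the main obstacle is the phase analysis in the large–$|\xi|$ case. Because $\sqrt{\Phie{\xi_1-\xi}}$ is not polynomial, the clean change of variable $\xi_1=\tfrac12\xi+\eta$ that collapsed the two earlier phases into a single cubic is not available, and one must instead decompose the $\xi_1$–line by hand into the near–origin resonance region, the near–$\xi$ region, and the quartic tail, comparing $h_{\pm}$ to explicit monotone majorants on each. The most delicate book–keeping is the passage between the generic case, where $|\Lambda_{\pm}|\approx|\tau\mp\sqrt{\Phie{\xi}}|$ is sizeable, and the degenerate subcases in which $\tau$ lies in the windows $-\sqrt{\Phie{\xi}}\leq\tau\leq-\tfrac12\sqrt{\Phie{\xi}}$ or $\sqrt{\Phie{\xi}}\leq\tau\leq\tfrac32\sqrt{\Phie{\xi}}$, where $\Lambda_{\pm}$ collapses to $\lr{1}$ and the corresponding $F$ must be read as an $O(1)$ resonant contribution.
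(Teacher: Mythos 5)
Your proposal follows essentially the same route as the paper: the $\tau_1$-integration via Lemma~\ref{L:s-1_s-2}, the convexity of the convoluted phase $\tau+\Phie{\xi_1}\mp\sqrt{\Phie{\xi_1-\xi}}$ with minimum $\tau+m$, $m\approx\sqrt{\Phie{\xi}}$, attained near $(\xi/\varepsilon)^{1/3}$, the resonance/tail splitting with the weight bounded by $\max\{\lr{\xi},\lr{\Lambda_{\pm}}^{1/4}\}^{2(l-k)}$, the separate treatment of a neighborhood of $\xi_1=\xi$ when $l<0$ (the paper's Lemma~\ref{L:around_xi}), and the direct bound for $|\xi|\leq 32\varepsilon^{-2}$. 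The one step you leave implicit --- integrating $\lr{f_{\tau,\xi}}^{-2B}$ across the zero $A_2$ of the phase when $\tau+\sqrt{\Phie{\xi}}\leq 0$ --- is carried out in the paper by the bound $A_2\leq C\varepsilon^{-1/2}\lr{\Lambda}^{1/4}$ together with a convexity-based partition $a_i=A_m+\tfrac{i}{i+1}(A_2-A_m)$ giving $f_{\tau,\xi}(a_i)\leq\tfrac{1}{i+1}\Lambda$ and a harmonic sum, which is exactly where the logarithmic loss behind the exponent $(B-\tfrac{1}{8})-$ enters.
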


\begin{proof}

By Lemma~\ref{L:s-1_s-2}, it suffices to prove that
\[
\begin{split}
& {\bigg(}\int_{-\infty}^{\infty}\frac{\lr{\xi_1-\xi}^{2l}\lr{\xi_1}^{-2k}d\xi_1}{\lr{\tau
\mp\sqrt{\Phie{\xi_1-\xi}}+\Phie{\xi_1}}^{2B}}{\bigg)^{1/2}}\\
&{\hskip 2cm}\leq C(B,\varepsilon,k,l)\max\{F(\xi,\Lambda_+),F(\xi,\Lambda_-)\}\,.
\end{split}
\]
By the decomposition
\[
\begin{split}
&\int_{-\infty}^{\infty}\frac{\lr{\xi_1-\xi}^{2l}\lr{\xi_1}^{-2k}d\xi_1}{\lr{\tau
\mp\sqrt{\Phie{\xi_1-\xi}}+\Phie{\xi_1}}^{2B}}\\
&=\int_{-\infty}^{0}\frac{\lr{\xi_1-\xi}^{2l}\lr{\xi_1}^{-2k}d\xi_1}{\lr{\tau
\mp\sqrt{\Phie{\xi_1-\xi}}+\Phie{\xi_1}}^{2B}}+\int_0^{\infty}
\frac{\lr{\xi_1-\xi}^{2l}\lr{\xi_1}^{-2k}d\xi_1}{\lr{\tau
\mp\sqrt{\Phie{\xi_1-\xi}}+\Phie{\xi_1}}^{2B}}\\
&=\int_{0}^{\infty}\frac{\lr{\xi_1+\xi}^{2l}\lr{\xi_1}^{-2k}d\xi_1}{\lr{\tau
\mp\sqrt{\Phie{\xi_1+\xi}}+\Phie{\xi_1}}^{2B}}+\int_0^{\infty}\frac{\lr{\xi_1-\xi}^{2l}\lr{\xi_1}^{-2k}d\xi_1}{\lr{\tau
\mp\sqrt{\Phie{\xi_1-\xi}}+\Phie{\xi_1}}^{2B}}\,,
\end{split}
\]
we can estimate the four integrals in the last line of the above equation.
From the reduction above, we may assume that $\xi\geq 0$. We can also see from
the following that $\Lambda_+$ comes from the integral with
$\tau+\sqrt{\Phie{\xi_1-\xi}}+\Phie{\xi_1}$ in the integrand while
$\Lambda_-$ comes from the integral with
$\tau-\sqrt{\Phie{\xi_1+\xi}}+\Phie{\xi_1}$ in the integrand.
Thus we will use $\Lambda$ instead of $\Lambda_+$ or
$\Lambda_-$ in the following proof for the simplicity of notation.

{\bf Part I.}
First, we estimate the integral
\begin{equation}\label{E:key-integral-2}
\int_{0}^{\infty}\frac{\lr{\xi_1-\xi}^{2l}\lr{\xi_1}^{-2k}}
{\lr{\tau+\sqrt{\Phie{\xi_1-\xi}}+\Phie{\xi_1}}^{2B}}\,d\xi_1\,.
\end{equation}
Before estimating the integral, note that for $l\geq 0\;,\;k<0$ and
$\xi_1\geq 0\;,\;\xi\geq 0$, the inequality
\begin{equation}\label{E:l_positive}
\lr{\xi_1-\xi}^{2l}\lr{\xi_1}^{-2k}\leq \lr{\xi_1+\xi}^{2l}\lr{\xi_1}^{-2k}
\end{equation}
always holds. But this is not true when $\xi_1$ is close to $\xi$ for $l< 0,\;k<0$ and $\xi_1\geq 0,\;\xi> 0$.
However we have
\begin{equation}\label{E:l_negative}
\begin{split}
&\int_{0}^{\infty}\frac{\lr{\xi_1-\xi}^{2l}\lr{\xi_1}^{-2k}}
{\lr{\tau+\sqrt{\Phie{\xi_1-\xi}}+\Phie{\xi_1}}^{2B}}\,d\xi_1\\
&\leq \int_{0}^{\infty}\frac{2\lr{\xi_1+\xi}^{2l}\lr{\xi_1}^{-2k}}
{\lr{\tau+\sqrt{\Phie{\xi_1-\xi}}+\Phie{\xi_1}}^{2B}}\,d\xi_1\\
&{\hskip 1.5cm}+\int_{\frac{1}{2}\xi}^{\frac{3}{2}\xi}\frac{\lr{\xi_1-\xi}^{2l}\lr{\xi_1}^{-2k}}
{\lr{\tau+\sqrt{\Phie{\xi_1-\xi}}+\Phie{\xi_1}}^{2B}}\,d\xi_1
\end{split}
\end{equation}
for $-1/2<l<0$.
This will be used to estimate~\eqref{E:key-integral-2}.
The last integral of~\eqref{E:l_negative} will be estimated in Lemma~\ref{L:around_xi}.
Hence we only need to estimate the first integral in the right hand side
of~\eqref{E:l_negative} where $l$ could be negative or non-negative.

For this purpose, we need to discuss the behavior of $\lr{\tau+\sqrt{\Phie{\xi_1-\xi}}+\Phie{\xi_1}}$. Define 
 $f_{\tau,\xi}(\xi_1)=\tau+\sqrt{\Phie{\xi_1-\xi}}+\Phie{\xi_1}$.
It is easy to see the function $f_{\tau,\xi}(\xi_1)$ has the absolute minimum.
Let $f_{\tau,\xi}(\xi_1)$ have the minimum $\tau+m$
when $\xi_1=A_m> 0$.  We note that $f_{\tau,\xi}(\xi_1)$ is not differentiable at
$\xi$ and there exists a constant $c<1$ such that
if $\xi<c$ then $A_m=\xi$. For $\xi\geq c$, we always have $A_m<\xi$ and
$A_m$ satisfies
\[
2A_m+4\varepsilon^2 A_m^3-\frac{1+2\varepsilon^2(\xi-A_m)^2}{\sqrt
{1+\varepsilon^2(\xi-A_m)^2}}=0.
\]
By comparing $1$ and $\varepsilon(\xi-A_m)$, we have
\begin{equation}\label{E:A_m_1}
\max\left\{\frac{1}{4},\frac{1}{\sqrt{2}}\varepsilon(\xi-A_m)\right\}
\leq A_m+2\varepsilon^2 A_m^3
\leq \max\left\{\frac{3}{2},\frac{3}{2}\varepsilon(\xi-A_m)\right\}.
\end{equation}
We also note that the $f_{\tau,\xi}(\xi_1)$ is convex as one can check that
\begin{equation}\label{E:second_derivative_f}
f''_{\tau,\xi}(\xi_1)=\frac{|\xi_1-\xi|(3\varepsilon^2+2\varepsilon^4(\xi_1-\xi)^2)}
{(1+\varepsilon^2(\xi_1-\xi)^2)^{3/2}}+2+12\varepsilon^2\xi_1^2> 0\;,\;\; \xi_1\neq\xi.
\end{equation}

\noindent{\bf Case 1.} $\xi\geq {32\varepsilon^{-2}}.$

If $\varepsilon(\xi-A_m)\leq 1$, then we have
$A_m+2\varepsilon^2 A_m^3\leq \frac{3}{2}$, hence
$A_m<\frac{3}{2}$ and $\varepsilon\xi\leq 1+\varepsilon A_m<5/2$.
Thus the relation $\varepsilon>32 (\xi\varepsilon)^{-1}$ leads to
$\varepsilon>\frac{64}{5}$ which contradicts  $\varepsilon\leq 1$ .
Therefore we only have to discuss
\[
\frac{1}{\sqrt{2}}\varepsilon(\xi-A_m)\leq A_m+2\varepsilon^2A_m^3\leq \frac{3}{2} \varepsilon(\xi-A_m).
\]
Since $A_m>0$  we have
\begin{equation}\label{E:A_m_2}
\min\left\{\frac{(1/\sqrt{2})\varepsilon}{1+(1/\sqrt{2})\varepsilon}\xi,(\frac{\xi}{4\sqrt{2}\varepsilon})^{1/3} \right\}
\leq A_m\leq \min\left\{\frac{(3/2)\varepsilon}{1+(3/2)\varepsilon}\xi,(\frac{3\xi}{4\varepsilon})^{1/3} \right\}.
\end{equation}
The restriction $\xi\geq 32\varepsilon^{-2}$ implies that
\begin{equation}\label{E:A_m}
A_m=c\left(\frac{\xi}{\varepsilon}\right)^{1/3}
\end{equation} where $1/2< c< 1$.
By Taylor's expansion, we have
\begin{equation}\label{E:sqr-Phie}
\sqrt{\Phie{\xi}}=\sqrt{\xi^2+\varepsilon^2\xi^4}
=\varepsilon\xi^2\sqrt{1+\frac{1}{\varepsilon^2\xi^2}}
=\varepsilon\xi^2+{\rm l.o.t.}
\end{equation}
and
\begin{equation}\label{E:m-1}
\begin{split}
m&=\sqrt{\Phie{\xi-A_m}}+\Phie{A_m}\\
&=\varepsilon(\xi-A_m)^2
+\frac{1}{2}(\xi-A_m)+\varepsilon^2A_m^4+A_m^2+{\rm l.o.t.}\\
&=\varepsilon\xi^2+(-2c+c^4)\varepsilon^{2/3}\xi^{4/3}+\frac{1}{2}\xi+{\rm l.o.t.}
\end{split}
\end{equation}
where l.o.t. means the lower order terms. We also note that the term $\frac{1}{2}\xi$ in the last line of equation~\eqref{E:m-1} can be absorbed by $\varepsilon^{2/3}\xi^{4/3}$ since $\xi\geq 32 \varepsilon^{-2}$.
From~\eqref{E:sqr-Phie} and~\eqref{E:m-1}, we have
\begin{equation}\label{E:m}
\frac{3}{4}\sqrt{\Phie{\xi}}\leq m \leq \sqrt{\Phie{\xi}}\,.
\end{equation}
To estimate~\eqref{E:key-integral-2}, we need to discuss different cases according to
different values of $\tau$, see Figure~\ref{F:Lambda}. Note that
\[
f_{\tau,\xi}(\xi_1)=\tau+m+(-m+\sqrt{\Phie{\xi_1-\xi}}+\Phie{\xi_1})\,.
\]

\begin{figure}
   \includegraphics[scale=.6]{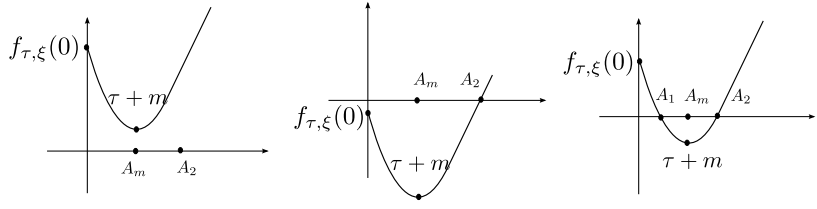}
    \caption{Graphs of $f_{\tau,\xi}(\xi_1)$ for different $\tau$.}
    \label{F:Lambda}
\end{figure}

\noindent{\bf Case 1a.} $f_{\tau,\xi}(A_m)=\tau+m\geq \sqrt{\Phie{\xi}}-m$. \par
In this case, we take $\Lambda=\tau+m$. The function $f_{\tau,\xi}(\xi_1)$ decreases
from $(0,\tau+\sqrt{\Phie{\xi}})$ to $(A_m,\tau+m)$, then increases from $(A_m,\tau+m)$ to
$(\infty,\infty)$.
We consider the decomposition
\begin{equation}\label{E:split_integration_1}
\int_{0}^{\infty}\frac{\lr{\xi_1+\xi}^{2l}\lr{\xi}^{-2k}}
{\lr{\tau+\sqrt{\Phie{\xi_1-\xi}}+\Phie{\xi_1}}^{2B}}\,d\xi_1=
\int_0^{5A_m}\cdots d\xi_1+\int_{5A_m}^{\infty}\cdots d\xi_1
\end{equation}
and estimate each term separately.
The condition $\Lambda\geq\sqrt{\Phie{\xi}}-m$ and~\eqref{E:A_m} imply
that $5A_m<C\varepsilon^{-1/2}\lr{\Lambda}^{1/4}$.
Hence~\eqref{E:A_m} and the direct estimate yield
\begin{equation}\label{E:integral_5A_m}
\begin{split}
&\int_{0}^{5A_m}\frac{\lr{\xi_1+\xi}^{2l}\lr{\xi_1}^{-2k}}
{\lr{\tau+\sqrt{\Phie{\xi_1-\xi}}+\Phie{\xi_1}}^{2B}}\,d\xi_1\\
&\quad\leq C(B)\frac{\lr{\xi}^{2l-2k}}
{\lr{\Lambda}^{2B}}\cdot\varepsilon^{-1/2}\lr{\Lambda}^{1/4}
\leq C(B,k,\varepsilon)\frac{\lr{\xi}^{2(l-k)}}
{\lr{\Lambda}^{2B-1/4}}
\end{split}
\end{equation}
provided $B>\frac{1}{8}$.

From estimates~\eqref{E:A_m} and~\eqref{E:m-1}, it is easy to check that
\begin{equation}\label{E:5A_m}
-m+\sqrt{\Phie{\xi_1-\xi}}+\frac{1}{4}\Phie{\xi_1}>0
\end{equation}
holds for $\xi_1>5A_m$. Therefore
\begin{equation}\label{E:reduced_integral}
\int_{5A_m}^{\infty}\frac{\lr{\xi_1+\xi}^{2l}\lr{\xi_1}^{-2k}}
{\lr{\tau+\sqrt{\Phie{\xi_1-\xi}}+\Phie{\xi_1}}^{2B}}\,d\xi_1
\leq\int_{5A_m}^{\infty}
\frac{\lr{\xi_1+\xi}^{2(l-k)}}{\lr{\Lambda+\frac{3}{4}\Phie{\xi_1}}^{2B}}\,d\xi_1\,.
\end{equation}
Let $\xi_1=\lr{\Lambda}^{1/4}\eta$.  If $\lr{\Lambda}\geq \lr{\xi}^4$, then
\[
\begin{split}
\int_{5A_m}^{\infty}
&\frac{\lr{\xi_1+\xi}^{2(l-k)}}{\lr{\Lambda+\frac{3}{4}\Phie{\xi_1}}^{2B}}\,d\xi_1\\
=&\frac{\lr{\Lambda}^{\frac{1}{4}(2l-2k)}}{\lr{\Lambda}^{2B}}\int_{5A_m}^{\infty}
\left(\frac{1+\xi+\xi_1}{\lr{\Lambda}^{1/4}}\right)^{2(l-k)}
\left(1+\frac{3}{4}\frac{\xi_1^2+\varepsilon^2\xi_1^4}{\lr{\Lambda}}\right)^{-2B}d\xi_1 \\
\leq& \frac{\lr{\Lambda}^{\frac{1}{4}(2l-2k)}}{\lr{\Lambda}^{2B}}\lr{\Lambda}^{1/4}\int_{0}^{\infty}
(1+\eta)^{2(l-k)}\left(1+\frac{3}{4}\Big(\frac{\eta^2}{\lr{\Lambda}^{1/2}}
+\varepsilon^2\eta^4\Big)\right)^{-2B}d\eta \\
\leq& C(B,k,l,\varepsilon) \frac{\lr{\Lambda}^{\frac{1}{4}(2l-2k)}}{\lr{\Lambda}^{2B-\frac{1}{4}}}
\end{split}
\]
provided $8B-2(l-k)>1$.
If $\lr{\Lambda}\leq \lr{\xi}^4$, then
\begin{equation}\label{E:large_Lambda}
\begin{split}
&\int_{5A_m}^{\infty}
\frac{\lr{\xi_1+\xi}^{2(l-k)}}{\lr{\Lambda+\frac{3}{4}\Phie{\xi_1}}^{2B}}\,d\xi_1\\
&=\frac{\lr{\xi}^{(2l-2k)}}{\lr{\Lambda}^{2B}}\int_{5A_m}^{\infty}
\left(\frac{1+\xi+\xi_1}{\lr{\xi}}\right)^{2(l-k)}
\left(1+\frac{3}{4}\frac{\xi_1^2+\varepsilon^2\xi_1^4}{\lr{\Lambda}}\right)^{-2B}d\xi_1 \\
&\leq \frac{\lr{\xi}^{(2l-2k)}}{\lr{\Lambda}^{2B}}\lr{\Lambda}^{1/4}\int_{0}^{\infty}
\left(1+\frac{\lr{\Lambda}^{1/4}}{\lr{\xi}}\eta\right)^{2(l-k)}
\left(1+\frac{3}{4}(\frac{\eta^2}{\lr{\Lambda}^{1/2}}+\varepsilon^2\eta^4)\right)^{-2B}d\eta \\
&\leq C(B,k,l,\varepsilon) \frac{\lr{\xi}^{(2l-2k)}}{\lr{\Lambda}^{2B-\frac{1}{4}}}
\end{split}
\end{equation}
provided $8B-2(l-k)>1$.

\noindent{\bf Case 1b.} $f_{\tau,\xi}(0)=\tau+\sqrt{\Phie{\xi}}\leq 0$. \par
In this case, we take $\Lambda=\tau+m$ again. Let $A_2$ be the number such that
$f_{\tau,\xi}(A_2)=0$. Then
\begin{equation}
\begin{split}
\varepsilon^2 A_2^4+A_2^2&=-\tau-\sqrt{\Phie{\xi-A_2}}\\
&\leq-\tau-\varepsilon(\xi-A_2)^2
 =(-\tau-\varepsilon\xi^2)+2\varepsilon\xi A_2+\varepsilon A_2^2\,,
\end{split}
\end{equation}
hence
\begin{equation}
\begin{split}
\varepsilon^2 A_2^4 &\leq\varepsilon^2 A_2^4+(1-\varepsilon)A_2^2
\leq (-\tau-\varepsilon\xi^2)+2\varepsilon\xi A_2\\
&\leq 2\max\{(-\tau-\varepsilon\xi^2),2\varepsilon\xi A_2\}.
\end{split}
\end{equation}
In case $\varepsilon^2 A_2^4\leq 2(-\tau-\varepsilon\xi^2)$, we have
$\varepsilon^2 A_2^4\leq 2(-\tau-\varepsilon\xi^2)\leq 2(-\tau-m)$.
When $\varepsilon^2 A_2^4
\leq 4\varepsilon\xi A_2$, we have $A_2\leq C\varepsilon^{-1/3}\xi^{1/3}$. From equations~\eqref{E:sqr-Phie} and~\eqref{E:m-1}, we note
that $|\Lambda|=|\tau+m|=|\tau+\sqrt{\Phie{\xi}}+m-\sqrt{\Phie{\xi}}|\geq|m-\sqrt{\Phie{\xi}}|
\geq C \varepsilon^{2/3}\xi^{4/3}$. Hence $A_2\leq C\varepsilon^{-1/2}\lr{\Lambda}^{1/4}$.
Therefore we can conclude
\begin{equation}\label{E:A_2}
A_2\leq C \varepsilon^{-1/2}\lr{\Lambda}^{1/4}.
\end{equation}
From the observation
\[
\max\{|(\xi-\xi_1)|,\varepsilon(\xi-\xi_1)^2\}\leq \sqrt{\Phie{\xi-\xi_1}}\leq 2
\max\{|(\xi-\xi_1)|,\varepsilon(\xi-\xi_1)^2\},
\]
$f_{\tau,\xi}(A_m)=\tau+m$ and
$f_{\tau,\xi}(A_2)=0$, it is easy to check that
\begin{equation}\label{E:5A_2}
\tau+\frac{1}{4}\Phie{\xi_1}+\sqrt{\Phie{\xi-\xi_1}}\geq |\tau+m|=|\Lambda|
\end{equation}
for $\xi_1\geq A_m+4(A_2-A_m)$. We split the integral
\[
\int_{0}^{\infty}\frac{\lr{\xi_1+\xi}^{2l}\lr{\xi_1}^{-2k}}
{\lr{\tau+\sqrt{\Phie{\xi_1-\xi}}+\Phie{\xi_1}}^{2B}}\,d\xi_1
\]
into two terms,
\begin{equation}\label{E:split_integration_1b_1}
\int_{0}^{5A_2-4A_m}\cdots d\xi_1+\int_{5A_2-4A_m}^{\infty}\cdots d\xi_1.
\end{equation}
Combining~\eqref{E:5A_2} and
following the same method as Case 1a. in estimating~~\eqref{E:reduced_integral}, we conclude that
\[
\int_{5A_2-4A_m}^{\infty}\frac{\lr{\xi_1+\xi}^{2l}\lr{\xi_1}^{-2k}}
{\lr{\tau+\sqrt{\Phie{\xi_1-\xi}}+\Phie{\xi_1}}^{2B}}\,d\xi_1
\leq C(B,l,k,\varepsilon)F(\xi,\Lambda_+).
\]

To estimate the first term of~\eqref{E:split_integration_1b_1}, we note that~\eqref{E:A_2} and
~\eqref{E:A_m} together imply that
the numerator of the integrand is bounded by
\begin{equation}
C(l,k,\varepsilon)\max\{\lr{\xi}^{2l-2k},
\lr{\Lambda}^{\frac{1}{4}(2l-2k)}\}.
\end{equation}
Hence it suffices to prove
\begin{equation}\label{E:split_integration_1b_2}
\int_{0}^{5A_2-4A_m}\frac{d\xi_1}{\lr{\tau+\sqrt{\Phie{\xi_1-\xi}}+\Phie{\xi_1}}^{2B}}
\leq C(B,\varepsilon)\frac{1}{\lr{\Lambda}^{(2B-\frac{1}{4})-}}.
\end{equation}
The left hand side of~\eqref{E:split_integration_1b_2} can be split into
\begin{equation}\label{E:split_integration_1b_3}
\begin{split}
&\int_0^{A_m}\cdots d\xi_1+\int_{A_m}^{A_2}\cdots d\xi_1+\int_{A_2}^{2A_2-A_m}\cdots d\xi_1+\\
&\hspace{1cm}\int_{2A_2-A_m}^{3A_2-2A_m}\cdots d\xi_1+\cdots+\int_{4A_2-3A_m}^{5A_2-4A_m}\cdots d\xi_1.
\end{split}
\end{equation}
First of all, we estimate the second term of~\eqref{E:split_integration_1b_3}, i.e., the integral
\begin{equation}\label{E:split_integration_1b_4}
\int_{A_m}^{A_2}\frac{d\xi_1}{\lr{\tau+\sqrt{\Phie{\xi_1-\xi}}+\Phie{\xi_1}}^{2B}}\,.
\end{equation}
It is natural to consider the partition
\begin{equation}\label{E:parition_A_2}
PA=\{a_0=A_m,a_1,\cdots,a_n, A_2\}
\end{equation}
where $a_i, i=1,\cdots,n,$ are given by
\begin{equation}
a_i=A_m+\frac{i}{i+1}(A_2-A_m)=\frac{1}{i+1}A_m+\frac{i}{i+1}A_2
\end{equation}
and $n$ is the integer closest to $|\Lambda|-1$. The inequality~\eqref{E:second_derivative_f}
implies that $f_{\tau,\xi}(\xi_1)$ is convex. Hence
\begin{equation}\label{E:A_i}
f_{\tau,\xi}(a_i)\leq\frac{1}{i+1}f_{\tau,\xi}(A_m)+\frac{i}{i+1}f_{\tau,\xi}(A_2)=\frac{1}{i+1}\Lambda<0.
\end{equation}
By inequality~\eqref{E:A_i} and $2B<1$, we have
\begin{equation}\label{E:A_m_to_A_2}
\begin{split}
\int_{A_m}^{a_n}&\frac{d\xi_1}{\lr{\tau+\sqrt{\Phie{\xi_1-\xi}}+\Phie{\xi_1}}^{2B}}=\int_{A_m}^{a_n}\frac{d\xi_1}
{(1+|f(\xi_1;\tau)|)^{2B}}\\
\leq&\sum_{i=1}^n\frac{(i+1)^{2B}}{\lr{\Lambda}^{2B}}(a_i-a_{i-1})
 \leq\frac{1}{\lr{\Lambda}^{2B}}\sum_{i=1}^n(i+1)(a_i-a_{i-1}) \\
=&\frac{1}{\lr{\Lambda}^{2B}}\times\{2(a_n-a_0)+(a_n-a_1)+\cdots+(a_n-a_{n-1})\}.
\end{split}
\end{equation}
Since $a_n<A_2$, we have
\[
a_n-a_i\leq A_2-a_i=\frac{1}{i+1}(A_2-A_m)<\frac{1}{i+1}A_2.
\]
Employing \eqref{E:A_2}, one can continue to estimate \eqref{E:A_m_to_A_2} as
\[
\eqref{E:A_m_to_A_2}\leq \frac{A_2}{\lr{\Lambda}^{2B}}\left(2+\frac{1}{2}+\cdots+\frac{1}{|\Lambda|}\right)\leq
\frac{C\varepsilon^{-1/2}\ln(|\Lambda|)}{\lr{\Lambda}^{2B-\frac{1}{4}}}=\frac{C\varepsilon^{-1/2}}{\lr{\Lambda}^{(2B-\frac{1}{4})-}}.
\]
Due to the choice of $a_n$, the trivial estimate and~\eqref{E:A_2} yield
\[
\int_{a_n}^{A_2}\frac{d\xi_1}{\lr{\tau+\xi_1^4+(\xi_1-\xi)^2}^{2B}}\leq 1\times(A_2-a_n)\leq\frac{1}{n}A_2=\frac{C\varepsilon^{-1/2}}{\lr{\Lambda}^{3/4}}.
\]
Thus the estimate of~\eqref{E:split_integration_1b_4} is completed.

Equation~\eqref{E:second_derivative_f} implies that the first derivative
 $f'_{\tau,\xi}(\xi_1)$ is strictly increasing.
Using this fact and $f_{\tau,\xi}(A_2)=0$, one can see that each integral from the third to the last terms of~\eqref{E:split_integration_1b_3} is smaller than the integration from $A_m$ to $A_2$. The estimate of the first term of~\eqref{E:split_integration_1b_3} is similar to the above.
Let $a_0=A_m$ and $a_i=\frac{1}{i+1}A_m\;,\;i=1,\cdots,|\Lambda|-1$. From the convexity of $f_{\tau,\xi}(\xi_1)$, we have
\[
f_{\tau,\xi}(a_i)\leq\frac{1}{i+1}f_{\tau,\xi}(A_m)+\frac{i}{i+1}f_{\tau,\xi}(0)\leq \frac{1}{i+1}\Lambda.
\]
Following the same arguments as before,
we conclude that the estimate holds for this term.

\noindent{\bf Case 1c.} $\tau+\sqrt{\Phie{\xi}}>0$ and $\tau+m<\sqrt{\Phie{\xi}}-m$. \par
We take $\lr{\Lambda}=\lr{1}$. In this case we have
\[
-\big(\sqrt{\Phie{\xi}}-m\big)<\tau+m<\sqrt{\Phie{\xi}}-m.
\]
The equation $f_{\tau,\xi}(\xi_1)=0$ may have two roots (see the third
graph of Figure~\ref{F:Lambda}), we let $A_2$ be the largest one.
From the above inequality,~\eqref{E:sqr-Phie} and~\eqref{E:m-1}, we have
\begin{equation}\label{E:A_2_less_5A_m}
A_2<5A_m.
\end{equation}
We split the integral into two terms as in~\eqref{E:split_integration_1} if $\tau+m\geq 0$,
otherwise it will be decomposed as in~\eqref{E:split_integration_1b_1}.  The second term
of each split can be treated as Case 1a or 1b respectively. The inequality~\eqref{E:A_2_less_5A_m}
implies that it suffices to estimate
\[
\int_{0}^{5A_m}\frac{\lr{\xi_1+\xi}^{2l}\lr{\xi_1}^{-2k}d\xi_1}{\lr{\tau+\sqrt{\Phie{\xi_1-\xi}}+\Phie{\xi_1}}^{2B}}.
\]
Choosing $\lr{\Lambda}=\lr{1}$, we have
\begin{equation}
\begin{split}
\int_{0}^{5A_m}&\frac{\lr{\xi_1+\xi}^{2l}\lr{\xi_1}^{-2k}d\xi_1}{\lr{\tau+\sqrt{\Phie{\xi_1-\xi}}+\Phie{\xi_1}}^{2B}}\\
\leq& C\lr{\xi}^{2l}\lr{(\xi/\varepsilon)^{1/3}}^{-2k}
\cdot \lr{(\xi/\varepsilon)^{1/3}}\\
\leq& C(k,\varepsilon){\lr{\xi}^{2(l-k)}}
\end{split}
\end{equation}
provided $k\leq -1/4$.

{\bf Case 2.} $0\leq\xi< 32\varepsilon^{-2}$.  \par
In this case, we need more effort in order to locate the value of $A_m$ and $m$.
Fortunately, this information is not necessary for our purpose. The boundedness of $\xi$ is sufficient for us to get the desired result.
By the discussion after~\eqref{E:l_positive} and~\eqref{E:l_negative},
we only need to estimate
\[
\int_{0}^{\infty}\frac{\lr{\xi_1+\xi}^{2l}\lr{\xi_1}^{-2k}}
{\lr{\tau+\sqrt{\Phie{\xi_1-\xi}}+\Phie{\xi_1}}^{2B}}\,d\xi_1
\]
since the last integral of~\eqref{E:l_negative} is bounded by $C(\varepsilon)$.

If $\tau>0$, then the integral is obviously  bounded
by a constant $C(B,k,l,\varepsilon)$ provided $8B-2(l-k)>1$.
If $-2\Phie{\xi}\leq\tau\leq 0$, the condition that $\xi$ is bounded 
implies that  $\tau$ is also bounded.
Hence the integral is bounded provided  $8B-2(l-k)>1$.
If $\tau<-2\Phie{\xi}$, we let $A_2$ be the number such that $f_{\tau,\xi}(A_2)=0$. Since $\xi$ is bounded,
$A_2\leq C\varepsilon^{-1/2}(-\tau)^{1/4}$. First we take
$\lr{\Lambda}=\lr{\tau+\sqrt{\Phie{\xi}}}$ instead of $\lr{\tau+m}$ and consider
\begin{equation}\label{E:xi-small}
\int_{0}^{\infty}\frac{\lr{\xi_1+\xi}^{2l}\lr{\xi_1}^{-2k}}
{\lr{\tau+\sqrt{\Phie{\xi_1-\xi}}+\Phie{\xi_1}}^{2B}}\,d\xi_1
=\int_0^{2A_2}\cdots d\xi_1 +\int_{2A_2}^{\infty}\cdots d\xi_1.
\end{equation}
Next we note that the condition $\tau<-2\Phie{\xi}$ implies $\lr{\tau}\approx\lr{\Lambda}=\lr{\tau+\sqrt{\Phie{\xi}}}$. Hence
$A_2\leq C\varepsilon^{-1/2}\lr{\Gamma}^{1/4}$.
Using~\eqref{E:second_derivative_f} and the fact $f_{\tau,\xi}(A_2)=0$, one can
prove as in Case 1b that the both integrals in the right hand side of
~\eqref{E:xi-small} are bounded by
\[
C(B,k,l,\varepsilon)\frac{\lr{\Gamma}^{\frac{1}{2}(l-k)}}
{\lr{\Gamma}^{(2B-\frac{1}{4})-}}
\]
if $8B-2(l-k)>1$. The condition $8B-2(l-k)>1$ also implies the
fraction above is finite.

{\bf Part II.}
Next, we estimate the integral
\begin{equation}\label{E:part-ii}
\int_{0}^{\infty}\frac{\lr{\xi_1+\xi}^{2l}\lr{\xi_1}^{-2k}}
{\lr{\tau-\sqrt{\Phie{\xi_1+\xi}}+\Phie{\xi_1}}^{2B}}\,d\xi_1.
\end{equation}
Note that the numerator of the integrand is
$\lr{\xi_1+\xi}^{2l}\lr{\xi_1}^{-2k}$.
The estimates are similar to those in Part I. First, we define $f_{\tau,\xi}(\xi_1)=\tau-\sqrt{\Phie{\xi_1+\xi}}+\Phie{\xi_1}$
and discuss the cases $\xi\geq 32\varepsilon^{-2}$
and $\xi<32\varepsilon^{-2}$ separately.
A straightforward differentiation yields
\begin{equation}\label{E:f_2}
f'_{\tau,\xi}(\xi_1)=2\xi_1+4\varepsilon^2\xi_1^3-\frac{1+2\varepsilon^2(\xi_1+\xi)^2}{\sqrt{1+\varepsilon^2(\xi_1+\xi)^2}},
\end{equation}
thus there exists a number $A_m$ such that $f_{\tau,\xi}(\xi_1)$ has the absolute
minimum $m$ at $A_m$. 

When $\xi> 32\varepsilon^{-2}$ we have $A_m=c(\frac{\xi}{\varepsilon})^{1/3}$
as one can find from~\eqref{E:f_2}. We also note that
\[
\sqrt{\Phie{\xi}}\leq m\leq \frac{5}{4}\sqrt{\Phie{\xi}}.
\]

Next we claim that $f_{\tau,\xi}(\xi_1)$ is a convex function.
One finds
\[
f''_{\tau,\xi}(\xi_1)=2+12\varepsilon^2\xi_1^2-
\frac{\varepsilon^2(\xi_1+\xi)[3+2\varepsilon^2(\xi_1+\xi)^2]}{(1+\varepsilon^2(\xi_1+\xi)^2)^{3/2}}.
\]
Let $a=\varepsilon(\xi_1+\xi)$ and rewrite
\[
\frac{\varepsilon^2(\xi_1+\xi)[3+2\varepsilon^2(\xi_1+\xi)^2]}{(1+\varepsilon^2(\xi_1+\xi)^2)^{3/2}}=
\varepsilon\frac{3a+2a^3}{(1+a^2)^{3/2}}=g(a).
\]
It is easy to see $g(a)$, $a\geq\varepsilon\xi$, is monotone increasing to $2\varepsilon$. Hence we have
\[
f''_{\tau,\xi}(\xi_1)> 0.
\]
With this information, we discuss the three cases $\tau+m\geq m-\sqrt{\Phie{\xi}}$,
$\tau-\sqrt{\Phie{\xi}}$ and $\sqrt{\Phie{\xi}}<\tau<2m-\sqrt{\Phie{\xi}}$ respectively.  Then we can prove the result by the same method as in Case 1 of  Part I.

When $0\leq \xi \leq 32\varepsilon^{-2}$ we can again get the desired bound
for~\eqref{E:part-ii} by the same
method as in Case 2 of Part I.

{\bf Part III.}
Now we estimate the integral
\begin{equation}
\int_{0}^{\infty}\frac{\lr{\xi_1+\xi}^{2l}\lr{\xi_1}^{-2k}}
{\lr{\tau+\sqrt{\Phie{\xi_1+\xi}}+\Phie{\xi_1}}^{2B}}\,d\xi_1.
\end{equation}
In this case, we define $f_{\tau,\xi}(\xi_1)=\tau+\sqrt{\Phie{\xi_1+\xi}}+\Phie{\xi_1}$ while $A_m$ and $m$ are defined as before.
The function $f_{\tau,\xi}(\xi_1)$ is  strictly increasing from $0$. Hence $A_m=0$ and $m=\sqrt{\Phie{\xi}}$.
$f_{\tau,\xi}(\xi_1)$ is convex since
\[
f''_{\tau,\xi}(\xi_1)=2+12\varepsilon^2\xi_1^2+
\frac{\varepsilon^2(\xi_1+\xi)[3+2\varepsilon^2(\xi_1+\xi)^2]}{(1+\varepsilon^2(\xi_1+\xi)^2)^{3/2}}>2.
\]
We omit the details since it is easy to follow the same method of Part I to obtain the desired result.

{\bf Part IV.}
Finally, we estimate the integral
\begin{equation}
\int_{0}^{\infty}\frac{\lr{\xi_1-\xi}^{2l}\lr{\xi_1}^{-2k}}
{\lr{\tau-\sqrt{\Phie{\xi_1-\xi}}+\Phie{\xi_1}}^{2B}}\,d\xi_1.
\end{equation}
By Lemma~\ref{L:around_xi}
and the discussion after~\eqref{E:l_positive} and~\eqref{E:l_negative},
we only have to estimate
\[
\int_{0}^{\infty}\frac{\lr{\xi_1+\xi}^{2l}\lr{\xi_1}^{-2k}}
{\lr{\tau-\sqrt{\Phie{\xi_1-\xi}}+\Phie{\xi_1}}^{2B}}\,d\xi_1.
\]
We define $f_{\tau,\xi}(\xi_1)=\tau-\sqrt{\Phie{\xi_1-\xi}}+\Phie{\xi_1}$.
We note that $f_{\tau,\xi}(\xi_1)$ is not differentiable at $\xi$. From
\[
f'_{\tau,\xi}(\xi_1)=2\xi_1+4\varepsilon^2\xi_1^3-
\frac{(\xi_1-\xi)+2\varepsilon^2(\xi_1-\xi)^3}{\sqrt{(\xi_1-\xi)^2
+\varepsilon^2(\xi_1-\xi)^4}}>0,\quad \xi_1\neq\xi
\]
we know $f$ is strictly increasing from $0$ and moreover, $f_{\tau,\xi}(\xi_1)$ is convex since
\[
f''_{\tau,\xi}(\xi_1)=2+12\varepsilon^2\xi_1^2-
\frac{\varepsilon^2|\xi_1-\xi|[3+2\varepsilon^2(\xi_1-\xi)^2]}
{(1+\varepsilon^2(\xi_1-\xi)^2)^{3/2}}>0,\quad \xi_1\neq\xi.
\]
Therefore we can estimate the integral by the method developed in Part I.

\end{proof}

Finally we prove the supplemental lemma.

\begin{lem}\label{L:around_xi}
If  $k<0\;,\; l<0$, $0<B<1/2$, and $\xi>32\varepsilon^{-2}$. Then
\begin{equation}
\int_{\frac{1}{2}\xi}^{\frac{3}{2}\xi}\frac{\lr{\xi_1-\xi}^{2l}\lr{\xi_1}^{-2k}}
{\lr{\tau\pm\sqrt{\Phie{\xi_1-\xi}}+\Phie{\xi_1}}^{2B}}\,d\xi_1
\leq C(\varepsilon)\frac{\lr{\xi}^{-2k}}{\lr{\xi}^{6B-}}.
\end{equation}
\end{lem}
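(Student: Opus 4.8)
The plan is to reduce the bound to a scalar integral in the frequency variable alone, linearize the convoluted phase by exploiting its monotonicity and convexity, and then split the integral into the part near the unique zero of the phase and the part away from it.

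First I would use that on the range $\xi_1\in[\tfrac12\xi,\tfrac32\xi]$ one has $\lr{\xi_1}\approx\lr\xi$; since $k<0$ this lets me extract $\lr{\xi_1}^{-2k}\le C\lr\xi^{-2k}$ out of the integral and, after the substitution $w=\xi_1-\xi\in[-\tfrac12\xi,\tfrac12\xi]$, reduces the claim to
\begin{equation*}
\int_{-\xi/2}^{\xi/2}\frac{\lr{w}^{2l}}{\lr{f(w)}^{2B}}\,dw\le C(\varepsilon)\,\lr\xi^{-(6B-)},\qquad f(w)=\tau\pm\sqrt{\Phie{w}}+\Phie{w+\xi}.
\end{equation*}

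Next I would analyze $f$. As in Part I and Part IV of the proof of Lemma~\ref{L:key-lemma-3} (compare \eqref{E:second_derivative_f}), the term $\Phie{w+\xi}$ contributes a derivative $2(w+\xi)+4\varepsilon^2(w+\xi)^3$ which, because $|\xi|>32\varepsilon^{-2}$ and $w+\xi\in[\tfrac12\xi,\tfrac32\xi]$, dominates the $\pm\tfrac{d}{dw}\sqrt{\Phie{w}}$ term (of size $O(1+\varepsilon|\xi|)$). Hence $f$ is strictly increasing and convex with $f'(w)\approx M:=\varepsilon^2\xi^3$ throughout, so it has at most one zero $w_0$ in the range; monotonicity then yields the lower bound $\lr{f(w)}\ge c\,(1+M|w-w_0|)$ (and if $f$ has no zero in the range the estimate is only easier).

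With this linearization I would split the integral at $|w-w_0|=1/M$. On the near region the phase factor is $\approx 1$ and $\lr{w}^{2l}\le 1$ (as $l<0$), so its contribution is $\lesssim 1/M=\varepsilon^{-2}\xi^{-3}\le\lr\xi^{-(6B-)}$ because $B<1/2$. On the far region $\lr{f(w)}\gtrsim M|w-w_0|$, so this piece is controlled by
\begin{equation*}
M^{-2B}\int_{-\xi/2}^{\xi/2}\frac{\lr{w}^{2l}}{|w-w_0|^{2B}}\,dw,
\end{equation*}
where $2B<1$ makes the singularity at $w_0$ integrable and the weight $\lr{w}^{2l}$, $l<0$, supplies decay at the ends of the range. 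The hard part will be this tail: one cannot simply discard $\lr{w}^{2l}$, since $\int(1+M|w|)^{-2B}\,dw$ diverges as the range grows (because $2B<1$), so the decay of the weight is essential. Estimating it in the worst case $w_0$ near $0$ gives $M^{-2B}\,\xi^{[2l-2B+1]_+}=\varepsilon^{-4B}\lr\xi^{-6B}\,\xi^{[2l-2B+1]_+}$; the exponent $2l-2B+1$ is nonpositive in the regime used in the applications, where $B=\tfrac12-\theta$ and $\theta<|l|$, so the tail is summable and contributes the clean factor $\varepsilon^{-4B}\lr\xi^{-6B}$. This is precisely the small loss recorded by the notation $6B-$, and combining the two regions completes the proof.
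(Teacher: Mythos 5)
Your proposal is correct and follows essentially the same route as the paper's proof: pull out $\lr{\xi_1}^{-2k}\approx\lr{\xi}^{-2k}$, reduce to a scalar integral, use that on $[\tfrac12\xi,\tfrac32\xi]$ with $\xi>32\varepsilon^{-2}$ the derivative of $\Phie{\xi_1}$ (of size $\approx\varepsilon^{2}\xi^{3}$) dominates that of $\pm\sqrt{\Phie{\xi_1-\xi}}$ (of size $O(1+\varepsilon\xi)$), so the phase is strictly monotone with at most one zero $b_\tau$ in the range, and then split into a short neighborhood of $b_\tau$ and its complement.

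The one substantive difference is the far region. The paper bounds $\lr{\xi_1-\xi}^{2l}\le 1$ at the outset and estimates the complement of $I_b$ by a unit partition, which yields $(\varepsilon^{2}\xi^{3})^{-2B}\sum_{j\lesssim\xi}j^{-2B}\approx\varepsilon^{-4B}\xi^{-6B}\cdot\xi^{1-2B}$; the extra factor $\xi^{1-2B}$ is then absorbed into the $6B-$ notation, which is legitimate only because $B=-b'$ is taken close to $1/2$ where the lemma is applied. You instead retain the weight $\lr{w}^{2l}$ and observe, correctly, that without it the tail genuinely diverges for $B$ bounded away from $1/2$; your condition $2l-2B+1\le 0$ (equivalently $\theta\le|l|$ in the application) is what makes the tail summable and gives the clean $\varepsilon^{-4B}\xi^{-6B}$. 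This is slightly sharper bookkeeping than the paper's and makes explicit where the hypothesis $l<0$ is actually used; in the regime where the lemma is invoked, both versions deliver the stated bound.
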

\begin{proof}
We observe that for $l<0$
\begin{equation}\label{E:around-xi-1}
\begin{split}
&\int_{\frac{1}{2}\xi}^{\frac{3}{2}\xi}\frac{\lr{\xi_1-\xi}^{2l}\lr{\xi_1}^{-2k}}
{\lr{\tau\pm\sqrt{\Phie{\xi_1-\xi}}+\Phie{\xi_1}}^{2B}}\,d\xi_1\\
& \leq C\lr{\xi}^{-2k}\int_{\frac{1}{2}\xi}^{\frac{3}{2}\xi}\frac{1}
{\lr{\tau\pm\sqrt{\Phie{\xi_1-\xi}}+\Phie{\xi_1}}^{2B}}\,d\xi_1.
\end{split}
\end{equation}

Let $\tau=-\Phie{c_{\tau}\xi}$ and $b_{\tau}$ be the number such that
\begin{equation}\label{E:b-tau}
\tau\pm\sqrt{\Phie{b_{\tau}\xi-\xi}}+\Phie{b_{\tau}\xi}=-\Phie{c_{\tau}\xi}\pm\sqrt{\Phie{b_{\tau}\xi-\xi}}+\Phie{b_{\tau}\xi}=0. 
\end{equation}
It is clear that we only have to consider the case where $b_\tau\in[\frac{1}{2}\xi,\frac{3}{2}\xi]$.
Let $I_b$ be the interval with $b_\tau$ as the center with radius $1$. Its intersection with
$[\frac{1}{2}\xi,\frac{3}{2}\xi]$ is again denoted by $I_b$.
From the relation~\eqref{E:b-tau}, we know that
\[
C(\varepsilon)\xi^3\leq \lr{\tau\pm\sqrt{\Phie{\xi_1-\xi}}+\Phie{\xi_1}}
\]
if $\xi_1\in [\frac{1}{2}\xi,\frac{3}{2}\xi]\setminus I_b$.  In fact the value of
$\lr{\tau\pm\sqrt{\Phie{\xi_1-\xi}}+\Phie{\xi_1}}$ increases to $C(\varepsilon)\xi^4$ when
$\xi_1$ tends to $\frac{1}{2}\xi$ or $\frac{3}{2}\xi$.
Since the length of interval
$[\frac{1}{2}\xi,\frac{3}{2}\xi]\setminus I_b$ is of order $\xi$. We can estimate
the integral
\[
\int_{[\frac{1}{2}\xi,\frac{3}{2}\xi]\setminus I_b}\frac{d\xi_1}{\lr{\tau\pm\sqrt{\Phie{\xi_1-\xi}}+\Phie{\xi_1}}^{2B}}
\]
by considering the sum of partitions as in the previous lemma and get the desired bound.    

The estimate of integration over $I_b$ is similar by noting that the interval $I_b$ has length not bigger than 2
and the variation of
$|\tau\pm\sqrt{\Phie{\xi_1-\xi}}+\Phie{\xi_1}|$ on $I_b$ is of order $C(\varepsilon)\lr{\xi}^3$.

\end{proof}

\section*{Acknowledgement}
The research of the first author is supported by National Science
Council Grant NSC100-2115-M-007-009-MY2. 
He would also like to express his gratitude to Y. Tsutsumi and K. Nakanishi  for valuable
comments on the draft.
The research of the second author is supported by National Science
Council Grant NSC101-2115-M-009-008-MY2. The research of the third author is supported by NSF grant 1160981.
The authors would like to thank anonymous referee for helpful comments and suggestions
which improve the presentation of paper significantly.

\end{document}